\theoremstyle{definition}
\newtheorem{ntn}{Notation}[section]
\newtheorem{dfn}[ntn]{Definition}
\theoremstyle{plain}
\newtheorem{lem}[ntn]{Lemma}
\newtheorem{prp}[ntn]{Proposition}
\newtheorem{thm}[ntn]{Theorem}
\newtheorem{cor}[ntn]{Corollary}
\newtheorem{cnj}[ntn]{Conjecture}
\theoremstyle{remark}
\newtheorem{rmk}[ntn]{Remark}
\newtheorem{exa}[ntn]{Example}
\renewcommand{\AA}{\mathbb{A}}
\newcommand{\GG}{\mathbb{G}}
\newcommand{\PP}{\mathbb{P}}
\newcommand{\KK}{\mathbb{C}}
\newcommand{\NN}{\mathbb{N}}
\newcommand{\QQ}{\mathbb{Q}}
\newcommand{\RR}{\mathbb{R}}
\newcommand{\ZZ}{\mathbb{Z}}
\newcommand{\cO}{\mathcal{O}}
\renewcommand{\cH}{\mathcal{H}}
\renewcommand{\cL}{\mathcal{L}}
\newcommand{\del}{\partial}
\newcommand{\fg}{\mathfrak{g}}
\newcommand{\fh}{\mathfrak{h}}
\newcommand{\fl}{\mathfrak{l}}
\newcommand{\fm}{\mathfrak{m}}
\newcommand{\fz}{\mathfrak{z}}
\newcommand{\gl}{\mathfrak{gl}}
\newcommand{\ideal}[1]{{\langle#1\rangle}}
\newcommand{\into}{\hookrightarrow}
\newcommand{\ol}{\overline}
\newcommand{\red}{\mathrm{red}}
\newcommand{\xymat}{\SelectTips{cm}{}\xymatrix}
\DeclareMathOperator{\Ad}{Ad}
\DeclareMathOperator{\Aut}{Aut}
\DeclareMathOperator{\coker}{coker}
\DeclareMathOperator{\defect}{def}
\DeclareMathOperator{\Der}{Der}
\DeclareMathOperator{\ddim}{\underline{dim}}
\DeclareMathOperator{\Ext}{Ext}
\DeclareMathOperator{\GL}{GL}
\DeclareMathOperator{\Gr}{Gr}
\DeclareMathOperator{\Hom}{Hom}
\DeclareMathOperator{\End}{End}
\DeclareMathOperator{\id}{id}
\DeclareMathOperator{\img}{im}
\DeclareMathOperator{\Mat}{Mat}
\DeclareMathOperator{\PGL}{PGL}
\DeclareMathOperator{\Rep}{Rep}
\DeclareMathOperator{\rk}{rk}
\DeclareMathOperator{\sign}{sign}
\DeclareMathOperator{\Sing}{Sing}
\DeclareMathOperator{\SI}{SI}
\DeclareMathOperator{\SL}{SL}
\DeclareMathOperator{\X}{X}
\DeclareMathOperator{\Z}{Z}
\numberwithin{equation}{section}
\begin{document}

\title{Free divisors in prehomogeneous vector spaces}

\author{Michel Granger}
\address{
M. Granger\\
Departement de Math\'ematiques\\
Universit\'e d'Angers\\
2 Bd Lavoisier\\
49045 Angers\\
France
}
\email{granger@univ-angers.fr}

\author{David Mond}
\address{
D. Mond\\
Mathematics Institute\\
University of Warwick\\
Coventry CV47AL\\
England
}
\email{D.M.Q.Mond@warwick.ac.uk}

\author{Mathias Schulze}
\address{
M. Schulze\\
Department of Mathematics\\
Oklahoma State University\\
Stillwater, OK 74078\\
United States}
\email{mschulze@math.okstate.edu}

\date{\today}

\subjclass{14M17, 20G05, 17B66, 14F40}

\keywords{linear free divisor, prehomogeneous vector space, quiver representation, de Rham cohomology}

\begin{abstract}
We study linear free divisors, that is, free divisors arising as discriminants in prehomogeneous vector spaces, and in particular in quiver representation spaces.  
We give a characterization of the prehomogeneous vector spaces containing such linear free divisors.
For reductive linear free divisors, we prove that the numbers of geometric and representation theoretic irreducible components coincide.
As a consequence, we find that a quiver can only give rise to a linear free divisor if it has no (oriented or unoriented) cycles.
We also deduce that the linear free divisors which appear in Sato and
Kimura's list of irreducible prehomogeneous vector spaces are the only irreducible reductive linear free divisors.

Furthermore, we show that all quiver linear free divisors are strongly Euler homogeneous, that they are locally weakly quasihomogeneous at points whose corresponding representation is not regular, and that all tame quiver linear free divisors are locally weakly quasihomogeneous.
In particular, the latter satisfy the logarithmic comparison theorem.
\end{abstract}

\maketitle
\tableofcontents

\section{Introduction}

A reduced hypersurface $D\subset\AA^n_\KK=V$ is called a linear free divisor if the sheaf of logarithmic vector fields admits a global basis of linear vector fields.
It turns out that $D$ is then the discriminant in a prehomogeneous vector space $(G,\rho,V)$ with $\dim G=n$ defined by a homogeneous polynomial $f\in\KK[V]$ of degree $\deg(f)=n$. 
In this sense, this class of divisors is opposite to the class of central hyperplane arrangements, where the irreducible components are linear but the basis of logarithmic vector fields is in general not.
(Only the normal crossing divisor is a member of both classes.) 
Recently linear free divisors have been subject to intensive research \cite{BM06,GMNS09,GMS09,Sev09}.

The main two questions on linear free divisors to be considered in this article are also central questions in the case of hyperplane arrangements:
\begin{enumerate}
\item\label{60} Which prehomogeneous vector spaces define linear free divisors? 
\item\label{61} Which linear free divisors satisfy the logarithmic comparison theorem?
\end{enumerate}

We investigate these questions in general and in the (combinatorial) special case of linear free divisors that arise as discriminants in quiver representation spaces, so-called quiver linear free divisors.

Concerning question~\eqref{60}, we give a representation theoretic characterization of (quiver) linear free divisors in Theorems~\ref{3} and \ref{12}.
Our main results in this direction, stated in Theorems~\ref{40} and \ref{44}, can be summarized as follows.

\begin{thm}\label{62}
(1) For a linear free divisor with reductive group, 
the number of its irreducible components coincides with the number of 
irreducible summands of the corresponding prehomogeneous space.

(2) Quiver linear free divisors occur only in the representation spaces of
quivers without (oriented or unoriented) cycles. 
\end{thm}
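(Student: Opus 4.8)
The plan is to prove (1) in the sharp form
\[
  \#\{\text{components of }D\}\ =\ \dim Z(G)^{\circ}\ =\ \#\{\text{irreducible summands of }V\}
\]
and then to read off (2). Throughout, let $D=\{f=0\}$ (with $f$ reduced) be a linear free divisor whose defining prehomogeneous vector space $(G,\rho,V)$ has $G$ reductive, let $\Omega$ be its open orbit, and write $V=V_1\oplus\dots\oplus V_k$ for the decomposition into irreducible $G$-submodules. Since $\dim G=\dim V=:n$ and the $n$ linear vector fields in a $\KK[V]$-basis of $\Der(-\log D)$ are $\KK[V]$-independent, they are $\KK$-independent in $\gl(V)$; hence $\rho$ is injective on $\fg$, so we may replace $G$ by $\rho(G)$ and assume $\rho$ faithful. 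Moreover, the linear logarithmic fields of $D$ are precisely the constant-coefficient combinations of that basis (weight reasons), so they form an $n$-dimensional Lie algebra $\fh$, which must equal $\rho(\fg)$ (it contains $\rho(\fg)$ and has the same dimension). By the standard prehomogeneous-vector-space dictionary, the irreducible components of $D$ are the zero loci of the basic relative invariants $f_1,\dots,f_l$, forming a $\ZZ$-basis of the group $\Gamma$ of relative invariants modulo constants; as there is no non-constant absolute invariant, the character map $\Gamma\into\X(G)$ is injective, and $\X(G)$ has rank $z:=\dim Z(G)^{\circ}$ because $G$ is connected reductive. This gives the easy inequality $\#\{\text{components}\}=l=\rk\Gamma\le z$.

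I would next prove $l=z$. Since $[G,G]$ has no characters, every relative invariant lies in $R:=\KK[V]^{[G,G]}$, on which the central torus $Z(G)^{\circ}$ acts; here $R^{Z(G)^{\circ}}=\KK[V]^{G}=\KK$, so two $\KK$-independent elements of a single $Z(G)^{\circ}$-weight space of $R$ would have a non-constant absolute invariant as their ratio, which is impossible. Hence the weight spaces are at most one-dimensional, $R$ is (isomorphic to) the affine semigroup ring $\KK[P]$ on the weight monoid $P\subseteq\X(Z(G)^{\circ})$, and $\Gamma\cong\langle P\rangle$. On the other hand $\dim R=\dim\bigl(V/\!\!/[G,G]\bigr)=z$, because $\dim G=\dim V$ forces the generic $G$-stabiliser, hence the generic $[G,G]$-stabiliser, to be finite, so the generic $[G,G]$-orbit in $V$ has codimension $\dim G-\dim[G,G]=z$. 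Therefore $l=\rk\Gamma=\rk\langle P\rangle=\dim\KK[P]=\dim R=z$.

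The one genuinely non-formal point is $z=k$. One inequality is easy: each $V_j$ is $\rho(G)$-irreducible and $Z(G)^{\circ}$ is central, so $Z(G)^{\circ}$ acts on $V_j$ through a character, and faithfulness embeds $Z(G)^{\circ}$ into the torus $(\KK^{\ast})^{k}$ of block scalars, whence $z\le k$. For $k\le z$ I would use multidegrees. Each $f_i$ is multihomogeneous for the $\NN^{k}$-grading of $\KK[V]$ coming from $V=\bigoplus_jV_j$; write $A=(d_{ij})\in\Mat_{l\times k}(\NN)$ for the matrix of these multidegrees. If $c\in\ZZ^{k}$ satisfies $Ac=0$, the block-scaling $E_c:=\bigoplus_j c_j\,\id_{V_j}\in\gl(V)$ kills every $f_i$ (it acts on $f_i$ by the scalar $(Ac)_i=0$), so the linear vector field $E_c$ preserves $f$, lies in $\Der(-\log D)$, and hence—being linear—in $\fh=\rho(\fg)$; since $\rho(G)\subseteq\prod_j\GL(V_j)$ by Schur's lemma, $E_c$ commutes with $\rho(G)$, so $E_c$ lies in $\fz$, the Lie algebra of $Z(G)^{\circ}$. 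But $l=z$ means the differentials $d\chi_1,\dots,d\chi_l$ span $\fz^{\ast}$, and each annihilates $E_c$, so $E_c=0$, i.e.\ $c=0$. Thus $A$ has rank $k$, forcing $l\ge k$, and altogether $k\le l=z\le k$, proving (1). Freeness of $D$ enters only in the implication $E_c\in\fh\Rightarrow E_c\in\rho(\fg)$; without it the conclusion fails, e.g.\ $(\GL_2,\KK^2\oplus\KK^2)$ is prehomogeneous with $\dim G=\dim V$ and $l=z=1$ but $k=2$.

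For (2), let $V=\Rep(Q,\mathbf d)=\bigoplus_{a\in Q_1}\Hom(\KK^{d_{s(a)}},\KK^{d_{t(a)}})$ be a linear free divisor with group $\bar G$ the image of $\GL(\mathbf d)$ in $\GL(V)$. Each arrow contributes one irreducible $\bar G$-summand, so $k=|Q_1|$; and $\bar G$ is reductive with $\dim Z(\bar G)^{\circ}=|Q_0|-c$, where $c$ is the number of connected components of $Q$ (inside the central torus $(\KK^{\ast})^{Q_0}$ of $\GL(\mathbf d)$, exactly the $c$-dimensional subtorus constant on each component of $Q$ acts trivially on $V$). Hence (1), in its sharp form $k=\dim Z(\bar G)^{\circ}$, gives $|Q_1|=|Q_0|-c$, so $Q$ is a disjoint union of trees; in particular it has no unoriented cycle, a fortiori no oriented cycle. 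The hard part of the whole argument is the reverse inequality in (1), and within it the equality $z=k$ (equivalently $k\le l$); the bound $l\le z$ and the identity $l=z$ are comparatively formal. (The representation-theoretic characterisations of Theorems~\ref{3} and \ref{12} can streamline parts of this but are not logically needed above.)
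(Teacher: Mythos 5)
Your argument is correct and follows the same overall architecture as the paper's (Lemma~\ref{85} plus Theorem~\ref{40}): number of components $=\dim Z(G)^{\circ}=$ number of irreducible summands, with part (2) then read off from $\#Q_1=\#Q_0-1$ exactly as in Theorem~\ref{44}. But you execute both key equalities differently. For $\#\{\text{components}\}=\dim Z$, the paper works with the finite generic isotropy group $G_p$ and the inclusion $\X^*(G)\into\X^*_0(G)$ obtained by raising characters to the $\#G_p$-th power; you instead identify $\KK[V]^{[G,G]}$ as a multiplicity-free $Z(G)^{\circ}$-module, hence a polynomial ring on the basic relative invariants, of Krull dimension $\dim Z$. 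For the harder inequality $k\le \ell$ (your $k\le l$), the paper invokes the \emph{group-level} maximality of $G$: the centralizer of $G$ in $\GL(V)$, and in particular the block-scalar torus, preserves $D$ and so lies in $G$, making \eqref{41} surjective. You use only the \emph{infinitesimal} statement that every linear logarithmic vector field lies in $\omega_0(\fg)$ -- i.e.\ linear freeness itself -- to place each block Euler field $E_c$ with $Ac=0$ into the center of $\fg$, and then kill it with the characters $d\chi_i$; this makes the multidegree matrix have full column rank. The two arguments are morally dual (surjectivity of \eqref{41} versus injectivity of the multidegree map), but yours is more self-contained and pinpoints, via your $(\GL_2,\KK^2\oplus\KK^2)$ example, exactly where reducedness of $f$ enters -- the same example the paper gives after Theorem~\ref{40}. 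Two steps deserve a line more of justification: the multihomogeneity of the $f_i$ for the block grading (the block torus centralizes $\rho(G)$, hence fixes the open orbit and each component of $D$ -- or note that the Saito determinant \eqref{42} is visibly multihomogeneous and factor it); and the passage from ``generic $[G,G]$-orbit has codimension $z$'' to ``$\dim\KK[V]^{[G,G]}=z$'', which needs the standard fact that for a connected group without nontrivial characters every rational invariant is a quotient of polynomial invariants, so that $\mathrm{Frac}(\KK[V]^{[G,G]})=\KK(V)^{[G,G]}$ and Rosenlicht's theorem applies. Neither is a real gap. Note also that the paper's Theorem~\ref{40} proves in addition that the summands are pairwise non-isomorphic, which your argument does not address but which the stated theorem does not require.
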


In Section~\ref{63} we cover special cases for which we can give a complete description of all 
\begin{enumerate}
\item irreducible linear free divisors with reductive group, or equivalently,
\item linear free divisors with reductive group and $1$-dimensional center, and all
\item linear free divisors with Abelian group.
\end{enumerate}

Our investigations concerning question~\eqref{61} are motivated by the following result and conjecture.
Denote by $j\colon U=V\setminus D\into V$ the inclusion of the complement.
One says that the \emph{logarithmic comparison theorem} holds for $D$ if the de~Rham morphism
\[
\xymat{
\Omega_X^\bullet(\log D)\ar[r]&j_*\Omega_U^\bullet\simeq\RR j_*\KK_U
}
\]
is a quasi--isomorphism.
The name is motivated by Grothendieck's comparison theorem (\cite[Thm.~2]{Gro66}) which asserts that the de~Rham morphism 
\[
\xymat{
\Omega_X^\bullet(*D)\ar[r]&j_*\Omega_U^\bullet\simeq\RR j_*\KK_U
}
\]
is a quasi--isomorphism.

Following earlier work of Castro-Jim\'enez et al.\ in \cite{CGHU07}, Narv\'aez-Macarro~\cite[Rem.~1.7.4]{Nar08} proved the following sufficient condition for the logarithmic comparison theorem to hold.

\begin{thm}\label{64}
Locally weakly quasihomogeneous free divisors satisfy the logarithmic comparison theorem.
\end{thm}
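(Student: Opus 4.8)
The plan is to transport the topological statement into $\mathcal{D}_X$-module theory, make the Euler field produced by weak quasihomogeneity act there, and let the self-duality of logarithmic de~Rham complexes of free divisors do the decisive work. First I would reduce to a local algebraic assertion. By Grothendieck's comparison theorem quoted above, $\Omega_X^\bullet(*D)\simeq\RR j_*\KK_U$, so the logarithmic comparison theorem for $D$ holds precisely when the inclusion $\Omega_X^\bullet(\log D)\into\Omega_X^\bullet(*D)$ is a quasi-isomorphism, a property that may be tested in the stalk at an arbitrary point $p\in D$. Since $D$ is locally weakly quasihomogeneous, fix coordinates $x_1,\dots,x_n$ centred at $p$ and non-negative integer weights $w_1,\dots,w_n$, not all zero, so that a reduced local equation $h$ of $D$ is quasihomogeneous of some positive degree $d$. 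Then $\chi=\sum_iw_ix_i\del_{x_i}$ vanishes at $p$, lies in $\Der_X(-\log D)$, and satisfies $\chi(h)=dh$; in particular $D$ is Euler homogeneous at $p$, and $(\chi-ds)h^s=0$, so $\chi$ realises the Bernstein parameter up to the scalar $d$.

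Next I would invoke Calderón-Moreno's theory of the ring $\widetilde{\mathcal{D}}_X\subset\mathcal{D}_X$ of logarithmic differential operators, generated over $\cO_X$ by $\Der_X(-\log D)$. For a free divisor this is a coherent sheaf of rings of finite homological dimension, $\cO_X$ admits the logarithmic Spencer resolution by free $\widetilde{\mathcal{D}}_X$-modules built on $\bigwedge^\bullet\Der_X(-\log D)$, and $\cO_X(D)$ carries a natural structure of left $\widetilde{\mathcal{D}}_X$-module. Transferring along $\widetilde{\mathcal{D}}_X\into\mathcal{D}_X$ produces a holonomic $\mathcal{D}_X$-module $\widetilde{\mathcal{M}}$ from $\cO_X(D)$, whose de~Rham complex computes $\Omega_X^\bullet(\log D)$ (up to a twist) and which comes equipped with a canonical $\mathcal{D}_X$-linear comparison morphism $c\colon\widetilde{\mathcal{M}}\to\cO_X(*D)$ extending $1/h\mapsto1/h$. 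By the criterion of Castro-Jim\'enez et al.\ and Narv\'aez-Macarro, refining \cite{CGHU07}, the logarithmic comparison theorem for $D$ is equivalent to $c$ being an isomorphism; so the whole problem becomes: show $c$ is an isomorphism whenever $D$ is locally weakly quasihomogeneous.

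Now the Euler field and duality cooperate. The morphism $c$ is automatically surjective, since $1/h$ generates $\cO_X(*D)$ over $\mathcal{D}_X$, so the content is the vanishing of its kernel (and of higher Tor). For this one uses the duality theorem of Calderón-Moreno–Narv\'aez-Macarro --- that for a free divisor the logarithmic de~Rham and logarithmic Spencer complexes attached to $\widetilde{\mathcal{D}}_X$ are interchanged, up to a shift and a twist by $\cO_X(D)$, under holonomic $\mathcal{D}_X$-module duality. This makes the surjectivity of $c$ equivalent to its injectivity (the $*$-comparison to the $!$-comparison), i.e.\ reduces ``$c$ is an isomorphism'' to a self-dual statement; and because $(\chi-ds)h^s=0$, Euler homogeneity rewrites that self-dual statement as a condition on the integer roots of a Bernstein–Sato-type polynomial attached to the logarithmic structure of $h$ rather than to $h$ itself. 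The non-negativity of the weights $w_i$ --- the only place ``weak'' quasihomogeneity enters --- is exactly what forbids the offending roots, so $c$ is an isomorphism and the logarithmic comparison theorem follows.

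The main obstacle is this last step. Establishing the logarithmic duality theorem for an \emph{arbitrary} free divisor, and then using the Euler field to extract and verify the resulting root condition --- in particular along the zero-weight directions, where there is no contracting $\GG_m$-action --- is the technical heart of the matter; by comparison the reductions of the first two paragraphs are formal once Calderón-Moreno's formalism is in place. (In the classical \emph{locally quasihomogeneous} case --- strictly positive weights --- one can bypass the duality: the weights define a $\GG_m$-action contracting a neighbourhood of $p$, and Cartan's formula $\mathcal{L}_\chi=\iota_\chi\,d+d\,\iota_\chi$, together with invertibility of $\mathcal{L}_\chi$ on nonzero weight spaces, yields an explicit contracting homotopy on the logarithmic complex that reduces the computation to a point at which the comparison is obvious. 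It is precisely the loss of that contraction that makes the $\mathcal{D}_X$-module machinery indispensable in the merely ``weakly'' quasihomogeneous setting.)
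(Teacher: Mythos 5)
You should first be aware that the paper does not prove Theorem~\ref{64} at all: it is imported verbatim from Narv\'aez-Macarro \cite[Rem.~1.7.4]{Nar08}, building on the vanishing theorem of Castro-Jim\'enez et al.\ \cite{CGHU07}, and is used here only as a black box to deduce the logarithmic comparison theorem for tame quiver linear free divisors from Theorem~\ref{50}. So there is no internal proof to compare against; the comparison has to be with the cited sources.

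Measured against those sources, your sketch identifies the right machinery --- the ring $\widetilde{\mathcal{D}}_X$ of logarithmic differential operators, the logarithmic Spencer resolution, the reformulation of the logarithmic comparison theorem as the statement that $\mathcal{D}_X\otimes^{\mathbf{L}}_{\widetilde{\mathcal{D}}_X}\mathcal{O}_X(D)\to\mathcal{O}_X(*D)$ is an isomorphism, and the logarithmic duality theorem --- and this is indeed the architecture of the Calder\'on-Moreno--Narv\'aez-Macarro approach. But as a proof it has a genuine gap, and you name it yourself: everything after ``the duality theorem \ldots makes the surjectivity of $c$ equivalent to its injectivity'' and ``the non-negativity of the weights is exactly what forbids the offending roots'' is asserted, not established. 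The decisive input in \cite{CGHU07} is a concrete vanishing theorem for logarithmic $\mathcal{D}$-modules in the presence of a weak Euler field with non-negative weights, and deriving it (in particular handling the zero-weight directions, where no contracting $\GG_m$-action and hence no Cartan-homotopy argument is available) is precisely the content of the theorem; a reader cannot reconstruct it from your description of ``a condition on the integer roots of a Bernstein--Sato-type polynomial attached to the logarithmic structure of $h$,'' which is left unspecified. Two smaller cautions: the duality and perversity statements you invoke require the logarithmic Spencer complex to actually resolve $\mathcal{O}_X$ over $\widetilde{\mathcal{D}}_X$ (the ``Spencer type'' hypothesis), which must be checked or circumvented for a general free divisor rather than taken for granted; and from the paper's definition of weak quasihomogeneity the eigenvalues of $A(0)$ are a priori only non-negative complex/rational numbers with at least one positive (\cite[Lem.~2.4]{GS06}), so the normalization to integer weights and a positive degree $d$ deserves a sentence. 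In short: correct strategy, essentially the one in the literature the paper cites, but the technical heart is missing, so this is a roadmap rather than a proof.
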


The following conjecture has been established for Koszul free divisors and for free divisors in dimension up to $3$ \cite[Cor.~1.5, Thm.~1.6]{GS06}.

\begin{cnj}\label{66}
If a free divisor satisfies the logarithmic comparison theorem then it must be strongly Euler homogeneous.
\end{cnj}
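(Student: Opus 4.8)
The plan is to work locally at a point $p\in D$ and to split the statement into a soft reduction and a hard core. Fix a local defining equation $f\in\cO_{X,p}$, its Jacobian ideal $J_f$, and the $\cO_{X,p}$--module $\Theta_{X,p}(\log D)$ of logarithmic vector fields. Recall that $D$ is \emph{Euler homogeneous} at $p$ when some $\chi\in\Theta_{X,p}$ satisfies $\chi(f)=f$ --- equivalently $f\in J_f$, equivalently the character $\Theta_{X,p}(\log D)\to\cO_{X,p}$, $\delta\mapsto\delta(f)/f$, has image not contained in $\fm_p$ --- and \emph{strongly Euler homogeneous} when moreover $\chi$ can be chosen with $\chi(p)=0$. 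By Grothendieck's comparison theorem quoted above we have $\Omega_X^\bullet(*D)\simeq\RR j_*\KK_U$, so LCT is equivalent to the purely $\mathcal D_X$--theoretic assertion that the inclusion $\Omega_X^\bullet(\log D)\into\Omega_X^\bullet(*D)$ is a quasi--isomorphism; this is the form I would work with.

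\emph{Step 1: reduction to Euler homogeneity.} Suppose $\chi(f)=f$ with $\chi(p)\neq 0$. Straightening $\chi$ to $\del/\del y_1$ in analytic coordinates turns the identity into $\del f/\del y_1=f$, hence $f=e^{y_1}g(y_2,\dots,y_n)$; as $e^{y_1}$ is a unit, $D$ is, near $p$, a product $D'\times\KK$ of a hypersurface with a smooth line. By the K\"unneth formula applied to both sides of the de~Rham morphism (the line factor contributing $\Omega_\KK^\bullet\simeq\KK$ via the Poincar\'e lemma), LCT holds for $D$ iff it holds for $D'$, and $D$ is strongly Euler homogeneous at $p$ iff $D'$ is at the corresponding point. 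Inducting on $\dim X$ we may therefore assume $D$ is \emph{not} a product with a smooth factor at $p$ --- so that any Euler field at $p$ automatically vanishes at $p$ --- and it remains to prove: LCT $\Rightarrow$ $D$ is Euler homogeneous at $p$. (When the induction bottoms out at a smooth germ the conclusion is immediate from $\chi=x_1\del_{x_1}$, and the cases established in \cite{GS06}, namely Koszul free divisors and $\dim X\le 3$, are consistency checks for this scheme.)

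\emph{Step 2: producing the Euler field.} This is the crux. Every $\delta\in\Theta_{X,p}(\log D)$ with $\delta(f)=a_\delta f$ gives the order--one relation $(\delta-s\,a_\delta)f^s=0$, and by the first paragraph $D$ is Euler homogeneous at $p$ precisely when some $a_\delta$ is a unit. I would try to force this from the comparison isomorphism: dualizing it yields a statement controlling the logarithmic $\mathcal D_X$--module $\widetilde{\cO_X(D)}=\mathcal D_X/\mathcal D_X\{\,\delta+a_\delta:\delta\in\Theta(\log D)\,\}$, and confronting its Bernstein--Sato data with that of $\cO_X(*D)=\mathcal D_X\cdot\tfrac1f$ --- in the spirit of Narv\'aez-Macarro's linearity analysis in \cite{Nar08} and of Torrelli's results on the generation of $\cO_X(*D)$ by $1/f$ --- ought to pin $b_f(s)$ down near $s=-1$ tightly enough to exhibit a first--order witness for the root $-1$, i.e.\ to show the character hits a unit.

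\emph{Main obstacle.} Step 2 is exactly where the conjecture is open, and I expect no routine way past it. The comparison isomorphism constrains the Bernstein--Sato polynomial only through its roots and their multiplicities and symmetry, not through the \emph{order} of the operators realising the functional equation, so nothing formal guarantees that the witness at $s=-1$ can be taken first--order, i.e.\ that $f\in J_f$. The known proofs sidestep rather than resolve this: for plane curves one invokes the classification of the germs satisfying LCT (precisely the quasihomogeneous ones); for Koszul free divisors the logarithmic Spencer complex is a resolution, so $\widetilde{\cO_X(D)}$ is well behaved and the Euler field drops out of the comparison directly; and the $\dim X\le 3$ result of \cite{GS06} stratifies and reduces to these. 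A proof in general would seem to require either a genuinely new cohomological invariant detecting $f\in J_f$ under LCT, or a structure theorem forcing divisors that satisfy LCT to be free, after which the Koszul/Spencer machinery applies.
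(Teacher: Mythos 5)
The statement you were asked to prove is Conjecture~\ref{66}: the paper itself offers no proof, and only records (citing \cite{GS06}) that it is known for Koszul free divisors and for free divisors in ambient dimension at most $3$. So there is no ``paper proof'' to compare against, and your submission is correctly not a proof but a diagnosis. That diagnosis is accurate. Your Step~1 is the standard and sound reduction: an Euler field $\chi$ with $\chi(p)\neq 0$ straightens to $\del/\del y_1$, the identity $\del f/\del y_1=f$ forces $f=e^{y_1}g(y_2,\dots,y_n)$, hence a local product structure $D\cong D'\times\KK$, and both LCT (via K\"unneth) and strong Euler homogeneity pass to the factor $D'$, so one may induct on dimension and assume every Euler field at $p$ vanishes at $p$. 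This reduces the conjecture to ``LCT at $p$ implies Euler homogeneity at $p$,'' which is exactly how the known partial results are organized.

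Your Step~2 is where the conjecture is genuinely open, and you say so; the obstacle you name is the right one. LCT constrains the $\mathcal{D}$-module $\cO_X(*D)$ and hence the Bernstein--Sato polynomial only through its roots near $-1$, whereas Euler homogeneity is the statement that the functional equation at $s=-1$ admits a witness of order one, i.e.\ $f\in J_f$; nothing formal bridges that order gap. The known cases do sidestep it as you describe: Koszul freeness makes the logarithmic Spencer complex a resolution so the Euler field falls out of the comparison, and dimension $\le 3$ reduces to that case plus the plane-curve classification. Since the present paper's contribution on this front is only the positive homogeneity results for quiver linear free divisors (Theorems~\ref{46}, \ref{34}, \ref{50}), which feed into Theorem~\ref{64} rather than into Conjecture~\ref{66}, your write-up should simply be labelled as a discussion of the conjecture rather than a proof attempt; as such it contains no errors.
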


The homogeneity properties in Theorem~\ref{64} and Conjecture~\ref{66} are defined and characterized in Section~\ref{65}.
Our main results in this direction, stated in Theorems~\ref{46}, \ref{34}, and \ref{50}, can be summarized as follows.
We refer to a quiver linear free divisor as tame if it is defined by a tame quiver as defined in Section~\ref{67}.

\begin{thm}
All quiver linear free divisors are strongly Euler homogeneous.
They are locally weakly quasihomogeneous at points whose
corresponding representation is not regular.
All tame quiver linear free divisors are locally weakly quasihomogeneous.
\end{thm}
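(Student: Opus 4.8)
The three statements are local on $D$, so the plan is to fix a point of $D$ with corresponding representation $M$ and write $M=\bigoplus_k M_k^{a_k}$ with the $M_k$ pairwise non-isomorphic and indecomposable. If $M$ is the origin $\bigoplus_v S_v^{\,d_v}$ of $V$ there is nothing to do: $D$ is a cone, so the linear Euler vector field of $V$ vanishes at $0$, satisfies $\xi(f)=(\deg f)\,f$, and has strictly positive weights, giving both strong Euler homogeneity and local weak quasihomogeneity at once. So assume $M\neq0$. Since $D$ is $\GL(\ddim)$-invariant, I would pick a complement $N'$ to $T_M(GM)$ in $V$ stable under a maximal torus of $\Aut(M)$, and a submanifold $G'$ of $G=\GL(\ddim)$ through $e$ transverse to $\Aut(M)$ of complementary dimension; then multiplication $G'\times(M+N')\to V$ is étale near $(e,M)$ and pulls $D$ back to $G'\times(D\cap(M+N'))$, so the germ of $(V,D)$ at $M$ is the product of a smooth germ with the germ at $M$ of $(M+N',\,D\cap(M+N'))$, where $N'\cong\Ext^1_Q(M,M)$ has dimension strictly less than $V$. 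The local structure of quiver orbits identifies this last germ — modulo the unipotent radical of $\Aut(M)$, which does not change the discriminant — with the representation space and discriminant of the \emph{local quiver} $Q_M$ (vertices the classes $[M_k]$, with $\dim\Ext^1_Q(M_k,M_l)$ arrows $[M_k]\to[M_l]$, dimension vector $(a_k)_k$, group $\prod_k\GL(a_k)$), which is again a quiver linear free divisor. Since strong Euler homogeneity and local weak quasihomogeneity (Section~\ref{65}) pass through étale maps and products with smooth germs, it then suffices to prove each property at the origin of $Q_M$.

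Strong Euler homogeneity at the origin of $Q_M$ needs no further hypothesis. The slice $N'$ carries a dense $\Aut(M)$-orbit (the open orbit of $V$ meets it), so the semi-invariant $f|_{N'}$ of $Q_M$ is non-constant; hence its character is non-trivial — a prehomogeneous vector space has only constant invariants — and restricts non-trivially to the centre $\GG_m$ of some factor $\GL(a_k)$. The cocharacter $s\mapsto s\cdot\id$ of that $\GG_m$ acts on $\Ext^1_Q(M,M)$ so that $f|_{N'}$ has a non-zero weight, so the associated linear vector field, suitably rescaled, vanishes at the origin and carries $f|_{N'}$ to itself; hence $D$ is strongly Euler homogeneous at $M$.

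Local weak quasihomogeneity is where the hypotheses enter. It follows as soon as $\prod_k\GL(a_k)$ has a cocharacter acting on $\Ext^1_Q(M,M)$ with strictly positive weights: for the central torus, a cocharacter $(\lambda_k)_k$ acts on the $\Ext^1_Q(M_k,M_l)$-block with weight $\lambda_l-\lambda_k$, so such a cocharacter exists exactly when $Q_M$ has no oriented cycle. Given one, $f|_{N'}$ — being a semi-invariant, hence homogeneous for these weights, with no constant term — is quasihomogeneous of strictly positive type, and prescribing arbitrary positive weights on the smooth orbit directions exhibits $D$ as locally weakly quasihomogeneous at $M$; Theorem~\ref{64} then gives the logarithmic comparison theorem at $M$. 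So everything reduces to showing $Q_M$ has no oriented cycle in the two asserted cases, using that $\KK Q$ is hereditary (Theorem~\ref{62}) so that every indecomposable is preprojective, regular or preinjective. When $M$ is not regular it has a preprojective or preinjective summand; a minimal preprojective summand $P$ is a sink of $Q_M$ and a maximal preinjective one a source, because $\Ext^1_Q(P,X)$ is dual to $\Hom_Q(X,\tau P)$ and $\Hom_Q$ vanishes from regular or preinjective modules to preprojective ones and from preinjective to regular ones; peeling these off reduces $Q_M$ to the subquiver on the regular summands, which — together with the directedness of the preprojective and preinjective parts — one shows is acyclic. When $Q$ is tame, the regular part is a disjoint union of tubes between which $\Hom_Q$ and $\Ext^1_Q$ both vanish and inside each of which the indecomposables are linearly ordered, so this preprojective/regular/preinjective stratification makes $Q_M$ acyclic at an arbitrary point $M$.

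The main obstacle is exactly the acyclicity of $Q_M$. Strong Euler homogeneity is soft — it only needs the character of a semi-invariant to be detected by a cocharacter of the reductive group of the local quiver — whereas local weak quasihomogeneity can genuinely fail at a regular point of a wild quiver linear free divisor (two regular indecomposable summands of $M$ may extend each other in both directions, producing an oriented $2$-cycle in $Q_M$), so the non-regularity and tameness hypotheses have to be used essentially. Making this precise rests on a case analysis over the summands $M_k$ built on the structure theory of finite-dimensional hereditary algebras — the Auslander--Reiten quiver and translate $\tau$, the Hom/Ext vanishings among the three classes of indecomposables, and the tube combinatorics in the tame case — and one must also supply the two facts used freely in the first paragraph: that the transverse slice of $D$ at $M$ is, analytically, the discriminant of the local quiver $Q_M$, and that this is again a quiver linear free divisor.
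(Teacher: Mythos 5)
Your overall strategy --- work in the normal directions $\Ext_Q(M,M)$ to the orbit of $M$ and produce one--parameter subgroups of $\Aut_Q(M)$ with the right weights --- is the same as the paper's, which gets by with the identification \eqref{74} and Proposition~\ref{47} and never needs the two facts you defer to the end (the \'etale identification of the slice with $\Rep(Q_M,(a_k))$ and the claim that this is ``again a quiver linear free divisor''); the latter claim is in any case delicate, since $Q_M$ may have loops and cycles and so cannot literally satisfy the paper's definition. Modulo that scaffolding, your Euler--homogeneity argument and your tame--case reduction to acyclicity of the $\Ext$--quiver are essentially the paper's (Theorem~\ref{46}, Lemma~\ref{71}).

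There are, however, two genuine gaps. First, for weak quasihomogeneity you use the criterion ``some cocharacter has \emph{strictly positive} weights on $\Ext_Q(M,M)$, i.e.\ $Q_M$ is acyclic''. This is stronger than needed and, more importantly, is not verifiable at a general non--regular point: after peeling off the preprojective and preinjective summands you assert that the induced quiver on the regular summands ``one shows is acyclic'', but for a wild quiver two regular summands may extend each other in both directions (you yourself note this), and a summand $X$ with $q_Q(\ddim X)\le 0$ produces a loop; nothing in the non--regularity hypothesis excludes these. The correct route (Proposition~\ref{48}, Theorem~\ref{34}) only requires a \emph{single} two--part splitting $M=M_1\oplus M_2$ with $\Ext_Q(M_2,M_1)=0$, i.e.\ non--negative rather than positive weights; a preprojective summand of minimal $\nu$ (or a preinjective one of maximal $\nu$) supplies this cut via \eqref{112} regardless of what the regular part looks like, so no statement about the regular subquiver is needed. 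Second, in the tame case the assertion that ``inside each tube the indecomposables are linearly ordered'' is false as stated: the $\Ext$--quiver of all indecomposables in a tube of period $p$ is cyclic, and acyclicity of the relevant subquiver holds only because $q_Q(d)=1\ne 0=q_Q(\delta)$ forces $d<\delta$, so the total length of the chain of composition factors is less than $p$ (Proposition~\ref{53} and the counting argument of Lemma~\ref{71}). Without invoking this dimension bound your tame--case argument does not close.
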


Using Theorem~\ref{64}, we derive the following

\begin{cor}
The logarithmic comparison theorem holds for tame quiver linear free divisors.
\end{cor}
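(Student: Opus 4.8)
The corollary is a formal consequence of Theorem~\ref{64} together with the preceding theorem, so the plan is simply to combine them after checking that the hypotheses match.

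First I would note that a quiver linear free divisor $D$ is, by definition, a free divisor in a smooth (indeed linear) space, so the standing hypotheses of Theorem~\ref{64} are met. By that theorem it therefore suffices to prove that a tame quiver linear free divisor $D$ is locally weakly quasihomogeneous at \emph{every} point $x\in D$, i.e.\ that in suitable analytic coordinates centred at $x$ a local equation of $D$ is weighted homogeneous with strictly positive weights.

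Next I would organise this point by point according to the representation type of $x$. By the preceding theorem, $D$ is locally weakly quasihomogeneous at every point whose corresponding quiver representation is not regular, and this part requires no tameness assumption. It remains to treat the points whose associated representation is regular, and exactly here the hypothesis that the quiver be tame is used: the preceding theorem asserts that a \emph{tame} quiver linear free divisor is locally weakly quasihomogeneous, the regular locus included. Granting this, every point of $D$ is covered, Theorem~\ref{64} applies, and the logarithmic comparison theorem holds for $D$.

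The only real work — assumed here as the content of the cited theorems — is establishing local weak quasihomogeneity along the regular locus, where one must exploit the structure of the regular representations of a tame quiver; the corollary itself poses no further obstacle beyond recognising that ``tame'' is precisely the hypothesis that makes this last input available.
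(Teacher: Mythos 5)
Your proof is correct and is exactly the paper's argument: the corollary is an immediate combination of Theorem~\ref{64} with the theorem that tame quiver linear free divisors are locally weakly quasihomogeneous (itself split, as you say, into the non-regular case, which needs no tameness, and the regular case, which does). One small inaccuracy in your parenthetical gloss: \emph{weak} quasihomogeneity requires only non-negative weights (with at least one positive), not strictly positive weights as you wrote — but this does not affect the logic, since you invoke the preceding theorem as a black box.
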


In Section~\ref{68}, we study quiver linear free divisors with respect to the reflection functors.
We summarize our main results on this subject, formulated in Theorem~\ref{31} and Corollary~\ref{37}, in the following theorem, whose second statement is relevant only in case of a regular generic representation, see \S\ref{84}.

\begin{thm}
Reflection functors preserve the class of quiver linear free divisors. 
Modulo reflection functors and adding arrows with associated head and tail dimensions equal to $1$, each equivalence class contains a divisor represented by an oriented bipartite graph.
\end{thm}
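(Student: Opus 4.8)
The plan is to deduce both assertions from the representation-theoretic characterization of quiver linear free divisors in Theorem~\ref{12}, combined with the classical theory of Bernstein--Gelfand--Ponomarev reflection functors. I shall use Theorem~\ref{12} in the form that whether a quiver $Q$ with dimension vector $\mathbf{d}$ defines a linear free divisor is governed by data of the category $\Rep(Q,\mathbf{d})$ with its $\GL(\mathbf{d})$-action: prehomogeneity (existence of a dense orbit), the value of the Euler form $\langle\mathbf{d},\mathbf{d}\rangle$, and the condition that the generic representation be a brick (so that $\mathbf{d}$ is a Schur root and the generic isotropy reduces to the scalars).

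\textbf{Reflection functors preserve the class.} Let $i$ be a sink of $Q$, the source case being dual, and let $\sigma_iQ$ be obtained by reversing the arrows at $i$. I would invoke the BGP functor $S_i^+\colon\Rep(Q)\to\Rep(\sigma_iQ)$, which restricts to an equivalence between the full subcategories of representations with no direct summand isomorphic to the simple $S_i$, carries indecomposables to indecomposables, preserves endomorphism algebras, and there satisfies $\ddim S_i^+M=s_i(\ddim M)$. The steps are: (i) the generic representation $M$ of dimension $\mathbf{d}$ is a brick, hence indecomposable, and for connected $Q$ with more than one vertex it is not isomorphic to $S_i$, so $S_i^+M$ is defined and has dimension vector $s_i\mathbf{d}$; (ii) $S_i^+$ sends the dense orbit in $\Rep(Q,\mathbf{d})$ to a dense subset of $\Rep(\sigma_iQ,s_i\mathbf{d})$, which is therefore again prehomogeneous with generic representation $S_i^+M$; (iii) the Euler form is $s_i$-invariant, so $\langle s_i\mathbf{d},s_i\mathbf{d}\rangle=\langle\mathbf{d},\mathbf{d}\rangle$; (iv) $\End(S_i^+M)\cong\End(M)=\KK$, so $S_i^+M$ is again a brick. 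By Theorem~\ref{12}, $(\sigma_iQ,s_i\mathbf{d})$ defines a linear free divisor; and since $S_i^+$ is given by explicit linear algebra intertwining the two group actions, it matches the open orbits and hence their discriminant complements, yielding the precise equivalence asserted in Corollary~\ref{37}.

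\textbf{Bipartite normal form.} By Theorem~\ref{62}(2) the underlying graph of a quiver linear free divisor is a forest, and it is classical that any two orientations of a tree are related by a sequence of reflections, each performed at a vertex that is a sink or source of the current orientation; so, working component by component, we may drive the orientation to the bipartite source--sink one. Running the corresponding reflection functors keeps us, by the previous paragraph, among quiver linear free divisors. The one extra point is that a reflection $s_i$ may produce $(s_i\mathbf{d})_i=0$ --- already the case for the $A_n$-quiver with dimension vector $(1,\dots,1)$ --- effectively deleting vertex $i$; here the move ``add an arrow with head and tail dimension $1$'' enters, since such an arrow contributes a factor $\Hom(\KK,\KK)=\KK$ whose semi-invariant is the corresponding coordinate, altering the divisor only by the understood product-type operation, and it can be inserted to restore the lost vertex or to supply an auxiliary sink or source. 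A finite iteration then produces a representative supported on an oriented bipartite graph. As indicated in \S\ref{84}, one should distinguish according to whether the generic representation is regular: in the non-regular case Coxeter functors already lower the dimension vector and the statement becomes essentially immediate, while in the regular (in particular tame) case one argues with the reflection functors as above.

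\textbf{Main obstacle.} I expect the delicate point to be step (ii): controlling the image of the open orbit and its complement under a functor that is only an equivalence away from the simple $S_i$, together with the degenerate situations where $(s_i\mathbf{d})_i$ collapses, which are exactly those forcing the dimension-one-arrow normalization. Pinning down what the ``equivalence class'' preserves --- the divisor germ, the pair $(G,V)$, or the coarser equivalence of \S\ref{84} --- and then checking that the combinatorial reduction terminates within that equivalence is where the actual work lies; the BGP input and the invariance of the Euler form are off the shelf.
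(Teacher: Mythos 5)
Your overall strategy is the same as the paper's: reduce everything to the characterization of Theorem~\ref{12} and to the standard properties of the BGP reflection functor (the paper's Theorem~\ref{25}), then use the tree structure from Theorem~\ref{44} to reach a bipartite orientation. However, there is a genuine gap: you have misquoted Theorem~\ref{12}. Its conditions are not just ``prehomogeneity, $q_Q(d)=1$, and the generic representation is a brick''; the first two of these only say that $d$ is a real Schur root, which guarantees a dense orbit of the right codimension but \emph{not} that the Saito determinant is reduced. The indispensable third condition is \eqref{12c}: every minimal degeneration $N$ has $\dim\End_Q(N)=2$ (equivalently $\dim\Ext_Q(N)=1$, equivalently the codimension-one orbits are dense in the components of $D$). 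Your steps (i)--(iv) never touch this, so the sentence ``By Theorem~\ref{12}, $(\sigma_iQ,s_i\mathbf d)$ defines a linear free divisor'' does not follow from what you have verified. This is exactly where the paper does its real work: when $d_k<\sum_{k\to i}d_i$ it shows that the locus $Z^*$ where the inverse reflection functor is undefined is a determinantal subvariety of codimension at least $2$, so that every minimal degeneration of $\Rep(Q^*,d^*)$ lies in $\Rep'(Q^*,d^*)$ and its endomorphism ring is preserved by Theorem~\ref{25}; only then does \eqref{12c} transfer. You flag ``controlling the image of the open orbit and its complement'' as the delicate point, which is the right instinct, but the proposal does not supply the argument.

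The second place where an argument is asserted rather than given is the degenerate case $d_k=\sum_{k\to i}d_i$ (where $(s_kd)_k=0$). You correctly observe that this is where the ``add an arrow with head and tail dimensions $1$'' move enters, but you do not show that the collapse can \emph{only} happen in that controlled form. The paper proves (Proposition~\ref{24}\eqref{24c}, via the structure of minimal degenerations in Proposition~\ref{73} and Lemma~\ref{21}) that equality forces a \emph{unique} arrow $\alpha\colon k\to l$ with $d_k=1=d_l$, whence $D=D^*\times\AA^1\cup\AA^{\dim D}\times\{0\}$ and the vertex can be deleted. Without this, one cannot rule out, say, $d_k=3=\sum d_i$, where deleting the vertex is not a legitimate move. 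Your bipartite reduction via reorienting the tree by reflections at sinks and sources is fine in outline and matches the paper's use of Corollary~\ref{36}, but it rests on both of the missing verifications above.
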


In Theorem~\ref{39}, we give a simple formula for the degrees of the irreducible components of a quiver linear free divisor.

\subsection*{Acknowledgments}

We would like to thank the anonymous referee for careful reading and many valuable suggestions.
MG and MS would like to thank David Mond and the University of Warwick for their hospitality in July 2009.
MS is grateful to Anthony Kable for helpful conversations.

\section{Linear free divisors}

\subsection{Definition and characterization}\label{16}

Consider a reduced hypersurface $D\subset\AA^n_\KK=V$, and let $\cO=\cO_V$ be the sheaf of regular functions.
We use coordinates $x=x_1,\dots,x_n$ on $V$ and denote by $\del=\del_1,\dots,\del_n$, where $\del_i=\del/\del x_i$, the corresponding partial derivatives.
One calls $D$ a \emph{linear free divisor} if the sheaf $\Der(-\log D)$ of logarithmic vector fields along $D$ (cf.~\cite{Sai80}) is freely generated by the germs of globally defined linear vector fields.
By Saito's criterion \cite[Thm.~1.8.(ii)]{Sai80}, such a $D$ is the zero locus of the reduced homogeneous polynomial
\begin{equation}\label{42}
f=\det(\delta_i(x_j))\in\KK[V]
\end{equation}
of degree $n$, where $\delta_1,\dots,\delta_n$ is any basis of the Lie algebra 
\begin{equation}\label{95}
\fl:=\Gamma(V,\cL)_0=\ideal{\delta_1,\dots,\delta_n}_\KK,\quad\cL:=\Der(-\log D),
\end{equation}
of global linear logarithmic vector fields.
We call such as basis $\delta_1,\dots,\delta_n$ a \emph{Saito basis} for $D$.
Conversely, for any $n$-dimensional Lie algebra of linear vector fields $\fl$, $f$ defines a linear free divisor if it is reduced.

In more abstract terms, $D$ is a linear free divisor if the inclusion $\fl\into\Gamma(V,\cL)$ gives rise to an isomorphism of free $\cO$-modules
\[
\fl\otimes_\KK\cO\cong\cL.
\]
Then $\bigwedge^n\fl$ and $\bigwedge^n\left(\Gamma(V,\Der_\KK\cO)_{-1}\right)$ are one dimensional subspaces of $\bigwedge^n\Gamma(V,\Der_\KK\cO)$ and $f\in \KK[V]$ is defined by
\[
f\cdot\bigwedge^n\left(\Gamma(V,\Der_\KK\cO)_{-1}\right)=\bigwedge^n\fl
\]
up to a constant multiple.

In terms of algebraic group actions, linear freeness can be described as follows.
For a general $D$, define $G$ to be the largest connected subgroup of $\GL(V)$ preserving $D$.
Denote by $\rho\colon G\to\GL(V)$ the inclusion.
By \cite[Lem.~2.2]{GMNS09} the infinitesimal action of $G$ gives rise to an isomorphism
\begin{equation}\label{26}
\omega_0\colon\fg\to\fl,\quad A\mapsto x^tA^t\del,
\end{equation}
where $\fg$ is the Lie algebra of $G$, inducing a map
\begin{equation}\label{1}
\omega\colon\fg\otimes\cO\to\cL.
\end{equation}
So $D$ is linear free if and only if the latter map is an isomorphism.
Brion gave the following characterization of linear freeness in the unpublished notes \cite{Bri07b}.

\begin{thm}\label{3}
$D$ is linear free if and only if the following conditions hold:
\begin{enumerate}
\item\label{3a} $V\setminus D$ is a unique $G$-orbit, and the corresponding isotropy groups are finite.
\item\label{3b} The smooth part in $D$ of each irreducible component of $D$ is a unique $G$-orbit, and the corresponding isotropy groups are extensions of finite groups by the multiplicative group $\GG_m$.
\end{enumerate}
\end{thm}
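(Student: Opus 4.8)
The plan is to prove both implications by translating the isomorphism statement for $\omega\colon\fg\otimes\cO\to\cL$ into geometric conditions on $G$-orbits, using the dictionary between logarithmic vector fields and the infinitesimal action \eqref{26}. First I would record the general fact that $\Der(-\log D)_p$, for $p$ outside $D$, is all of $\Der_\KK\cO_p$, while at a smooth point $p$ of $D$ it is the rank-$n$ submodule of vector fields tangent to $D$, and that for any $p$ the image $\omega(\fg\otimes\cO)_p$ is the $\cO_p$-module generated by the fundamental vector fields of the $G$-action, whose evaluation at $p$ spans the tangent space $T_p(G\cdot p)$ to the orbit. Since $\omega$ is always injective on a dense open set (the action has an open orbit as soon as \eqref{3a} or the freeness hypothesis holds) the content in both directions is the surjectivity/cokernel of $\omega$, analyzed pointwise via Nakayama.

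For the forward direction, assume $D$ is linear free, so $\omega$ is an isomorphism. Then at a point $p\notin D$ the fiber $\cL_p/\fm_p\cL_p$ has dimension $n$ and is spanned by the fundamental vector fields evaluated at $p$; hence $T_p(G\cdot p)=T_pV$, so $G\cdot p$ is open, and since this holds for every $p\notin D$ and $V\setminus D$ is connected (it is the complement of a hypersurface in affine space, hence irreducible), $V\setminus D$ is a single orbit. The isotropy is then finite because $\dim G=n=\dim V$ and the orbit-stabilizer count forces $\dim G_p=0$; connectedness/finiteness follows because $\ker\omega_0$ controls the Lie algebra of $G_p$ and $\omega_0$ is an isomorphism, so $\fg_p=0$. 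For \eqref{3b}, repeat the argument at a smooth point $p$ of an irreducible component $D_i$: there $\cL_p$ is free of rank $n$ and equals the tangent vector fields to $D$, its fiber has dimension $n$, but the fundamental vector fields tangent to $D$ at $p$ span a space of dimension $n$ only if $\dim T_p(G\cdot p)\ge n-1$ — and since the orbit lies in the smooth $(n-1)$-dimensional locus of $D_i$ we get $\dim G\cdot p=n-1$, so $G\cdot p$ is the full smooth part of $D_i$; the stabilizer then has dimension $1$, and one identifies its identity component with $\GG_m$ by exhibiting the Euler-type scaling in $\fg_p$ coming from homogeneity of $f$ (the orbit is conical), which gives the stated extension structure.

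For the converse, assume \eqref{3a} and \eqref{3b}. Injectivity of $\omega$: the open orbit in \eqref{3a} shows $\omega_0$, hence $\omega$ generically, is injective, and since $\fg\otimes\cO$ is torsion-free over the integral ring $\cO$, $\omega$ is injective everywhere. Surjectivity: check $\coker\omega=0$ locally by Nakayama at each $p$. At $p\notin D$, $T_p(G\cdot p)=T_pV$ by \eqref{3a}, so the fundamental vector fields generate $\Der_\KK\cO_p=\cL_p$; at a smooth point $p$ of some $D_i$, \eqref{3b} gives $\dim T_p(G\cdot p)=n-1$ with the orbit inside $D$, so the fundamental vector fields span an $(n-1)$-dimensional space of vectors tangent to $D$ at $p$, and together with the Euler field supplied by the $\GG_m$ in the stabilizer they generate the rank-$n$ module $\cL_p$ (here one uses that $\cL$ is reflexive, or equivalently that $D$, being the zero set of the determinant \eqref{42}, is free, so it suffices to match $\cL_p$ on the complement of a codimension-$2$ set). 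The remaining points of $D$ form a set of codimension $\ge2$ in $V$; since both $\fg\otimes\cO$ and $\cL=\Der(-\log D)$ are reflexive (the latter by Saito/Aleksandrov, the former obviously), an isomorphism in codimension $1$ extends to a global isomorphism, so $D$ is linear free.

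The main obstacle I expect is the analysis at smooth points of $D$ in both directions, specifically the bookkeeping that the stabilizer has a $\GG_m$-factor and that this $\GG_m$ contributes exactly the one extra generator needed to pass from the $(n-1)$-dimensional span of orbit-tangent fundamental fields to the rank-$n$ logarithmic module. This requires knowing that $f$ is a $G$-semi-invariant (so that $D=\{f=0\}$ is a union of orbit closures and the component structure matches the semi-invariant's factorization), and extending the isomorphism over the bad locus of codimension $\ge2$ cleanly — which is where invoking freeness of $D$ (hence reflexivity of $\cL$) rather than arguing by hand is the efficient move.
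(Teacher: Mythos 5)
Your overall strategy --- reduce to $V\setminus\Sing D$ using reflexivity of $\cL$ and the fact that $\Sing D$ has codimension $\ge 2$, then check the fibre of $\omega$ pointwise on the open orbit and on the smooth locus of each component --- is exactly the skeleton of the paper's proof; the paper merely outsources the pointwise analysis to Brion's criterion for log parallelizable pairs, which says that $\omega$ is an isomorphism along a stratum $S$ if and only if $S$ is a single orbit and the differential $d\rho_x$ of the normal action of $G_x$ is an isomorphism. Your forward direction is essentially complete once you observe that a connected one-dimensional group acting on the normal line with nonzero differential must be $\GG_m$ (since $\GG_a$ admits no nontrivial character, hence no action on a line with nonzero differential); the appeal to ``Euler-type scaling coming from homogeneity of $f$'' is not needed and not quite right, since the generator of $\fg_p$ is in general not the Euler field itself but some $\id-A$ with $A\in\fh$.

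The genuine gap is in the converse, at the step ``together with the Euler field supplied by the $\GG_m$ in the stabilizer they generate the rank-$n$ module $\cL_p$.'' At a smooth point $p$ of $D$ the evaluation map $\cL_p/\fm_p\cL_p\to T_pV$ has a one-dimensional kernel spanned by the class of $x_1\del_1$, and hypothesis (2) gives only that the fundamental fields of $\fg$ surject onto $T_pD$; to get surjectivity of $\omega$ at $p$ you must show that the fundamental field of the $\GG_m\subseteq G_p$ does \emph{not} lie in $\fm_p\cL_p$, i.e.\ that this $\GG_m$ acts on the normal line $T_pV/T_pD$ through a \emph{nontrivial} character. This does not follow from (2) alone, and it is exactly where hypothesis (1) re-enters: the paper takes a $\GG_m$-invariant line $L$ transversal to $D$ at $p$ (possible since $\GG_m$ is reductive), notes that $L\cap S_0$ is dense in $L$, and concludes the character cannot be trivial because points of $S_0$ have finite isotropy. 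Without some such argument your Nakayama step at smooth points of $D$ does not close. A smaller caveat: in the converse you should invoke reflexivity of $\Der(-\log D)$ for an arbitrary reduced divisor rather than ``because $D$ is free,'' which would be circular at that stage.
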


\begin{proof}
By reflexivity of $\cL$ and since $\Sing D$ has codimension $2$, \eqref{1} being an isomorphism can be checked on $V\setminus\Sing D$.
In the terminology of \cite{Bri07a}, this condition means that the pair $(V\setminus\Sing D,D\setminus\Sing D)$ is log parallelizable under $G$.
So the ``only if'' statement follows from Proposition~2.1.2 in loc.~cit. applied to the (smooth) logarithmic strata $S_0=V\setminus D$ and the irreducible components $S_1,\dots,S_k$ of $D\setminus\Sing D$.
In our situation, this proposition states that $(V\setminus\Sing D,D\setminus\Sing D)$ is log parallelizable under $G$ along $S\in\{S_0,\dots,S_k\}$ exactly if $S$ is a unique $G$-orbit and the differential $d\rho_x$ of the normal action $\rho_x$ of $G_x$ at $x\in S$ is an isomorphism.
In case these equivalent conditions hold true, the proof yields for each $x\in S_j$, $j=1,\dots,k$, an isomorphism of exact sequences
\begin{equation}\label{4}
\xymat{
0\ar[r] & \fg_x\ar[r]\ar[d]^{d\rho_x} & \fg\ar[r]\ar[d]^{\omega_x} & \fg/\fg_x\ar[r]\ar@{^(->}[d]^{\overline\omega_x} & 0\\
0\ar[r] & N_{S_j/V,x}\ar[r]\ar@{=}[d] & \cL/\fm_x\cL\ar[r]\ar@{=}[d] & T_xS_j\ar[r]\ar@{=}[d] & 0\\
& \KK x_1\del_1 & \KK x_1\del_1\oplus\bigoplus_{i-2}^n\KK\del_i & \bigoplus_{i-2}^n\KK\del_i
}
\end{equation}
where $D$ is defined by $x_1$ in the completion $\hat\cO_x$.
The map $d\rho_x$ is the differential of the normal action and $\omega_x$ and $\overline\omega_x$ are induced by $\omega$ from \eqref{1}.
We shall get back to this diagram after the proof.

For the ``if'' statement one has to show that $d\rho_x$ is an isomorphism for $x\in V\setminus\Sing(D)$, using \cite[Prop.~2.1.2]{Bri07a} again.
This is clear for $x\in S_0$ by assumption~\eqref{3a}.
For $x\in S_j$, $j=1,\dots,k$, the normal action of the reductive subgroup $\GG_m\subseteq G_x$ from assumption~\eqref{3b} is induced by an action on a line $L$ transversal to $D$ at $x$ through some character.
Since $L\cap S_0$ is dense in $L$, this character can not be trivial in view of \eqref{3a}.
\end{proof}

Consider a linear free $D\subset V$ defined by $f\in\KK[V]$.
Then $(G,\rho,V)$ is a prehomogeneous vector space and $f$ is a relative invariant polynomial associated to some (non-trivial) character $\chi\in\X^*(G)$ by
\begin{equation}\label{5}
f(\rho(g)x)=\chi(g)f(x).
\end{equation}
This equality can be differentiated to
\begin{equation}\label{6}
\omega_0(A)(f)=\langle d\rho(A)x,\nabla f\rangle=d\chi(A)f.
\end{equation}
Define $H=\GL(V)_f^\circ\subset G$ where $\GL(V)_f=\ker\chi$ is the isotropy group of $f$ and denote by $\fh=\ker d\chi$ the Lie algebra of $H$.
The analogue of $\cL$ in \eqref{1} for $\fg$ replaced by $\fh$ is then
\[
\cH=\Der(-\log f):=\Der(-\log D)\cap\ker(df).
\]

\begin{dfn}
We call $D$ \emph{reductive} or \emph{Abelian} if $H$, or equivalently $G$, has the corresponding property. 
We call $D$ \emph{semisimple} if $H$ is semisimple.
\end{dfn}

Note that $D$ is semisimple or Abelian if and only if $\fh$, has the corresponding property.
However reductivity of $D$ can not be seen from the Lie algebra structure of $\fg$ only; it is equivalent to complete reducibility of the representation $d\rho\colon\fg\to\gl(V)$.
Note also that $D$ is semisimple if and only if $D$ is reductive and $V$ is irreducible.

\begin{prp}\label{8}
Let $D$ be a linear free divisor in $V$.
Then the $G$-orbits in Theorem~\ref{3}.\eqref{3b} are also $H$-orbits with finite isotropy groups.
For $t\ne0$, the level sets $D_t=\Z(f-t)\subset V\setminus D$ are closed $H$-orbits with finite isotropy groups.
\end{prp}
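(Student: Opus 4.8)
The plan is to reduce everything to the group-theoretic shape of the situation together with a dimension count. First one records that the scaling torus $\GG_m\cdot\id_V$ lies in $G$: since $f$ is homogeneous of degree $n$, the Euler field $\sum_ix_i\del_i$ is logarithmic along $D$ and hence lies in $\fl$, and $f(\lambda x)=\lambda^nf(x)$ shows that $\chi$ restricts to $\lambda\mapsto\lambda^n$ on $\GG_m\cdot\id_V$. As $\chi$ is non-trivial, $K:=\ker\chi$ is a closed normal subgroup of $G$ of codimension one; the group $H=\GL(V)_f^\circ$ is connected, is contained in $G$, and fixes $f$, so $H\subseteq K$, and for dimension reasons $H=K^\circ$. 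The closed subgroup $H\cdot(\GG_m\cdot\id_V)$ then has dimension $\dim G$, hence coincides with the connected group $G$, and consequently $K=\mu_n\cdot H$ with $\mu_n=\{\lambda\id_V:\lambda^n=1\}$.

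For the first assertion let $x$ lie in the smooth locus $S_j$ of an irreducible component of $D$, so that $\dim\fg_x=1$. Combining \eqref{6} with diagram \eqref{4} in the proof of Theorem~\ref{3}: $\omega_x$ carries a generator of $\fg_x$ to a non-zero multiple of the normal Euler field $x_1\del_1$, where $D=\Z(x_1)$ locally, and since $x_1\del_1(f)\equiv f$ modulo $\fm_x f$, \eqref{6} forces $d\chi\ne0$ on $\fg_x$, i.e.\ $\fg_x\cap\fh=0$. Hence the $H$-isotropy $H_x=H\cap G_x$ is finite, $\dim(H\cdot x)=\dim H=n-1=\dim S_j$, and $\chi(G_x)=\GG_m=\chi(G)$; the last equality yields $G=K\cdot G_x$ and therefore $S_j=G\cdot x=K\cdot x$, a finite union of $H$-orbits. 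Since $S_j$ is a single orbit of the connected group $G$ it is irreducible, and an irreducible variety cannot be a disjoint union of more than one locally closed subset of its own dimension; so $S_j=H\cdot x$.

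For the level sets fix $t\ne0$ and $x\in D_t\subseteq V\setminus D$. By Theorem~\ref{3}.\eqref{3a} the group $G_x$ is finite, and $\chi(G_x)=1$ because $f(x)\ne0$, so $G_x\subseteq K$; as $V\setminus D=G\cdot x$ we obtain $D_t=\{\rho(g)x:\chi(g)=1\}=K\cdot x$, which by $K=\mu_n\cdot H$ is a finite union of $H$-orbits, each of dimension $n-1=\dim D_t$ and with finite isotropy $H\cap G_x$. Moreover $\{df=0\}$ is $G$-stable and a proper closed subset of $V$, hence disjoint from the orbit $V\setminus D$, so $D_t=\Z(f-t)$ is closed and smooth; its irreducible components are then pairwise disjoint, and each of the $H$-orbits above, having full dimension, is one of them and is therefore closed in $D_t$. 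It remains to check that $D_t$ is connected: a reduced homogeneous polynomial $f$ cannot be written $p(g)$ with $\deg p\ge2$ (comparing leading forms in such an identity would make $f$ a proper power), so $f$ is non-composite and $f-c$ is irreducible for all but finitely many $c\in\KK$; choosing such a $c$ in $\KK^*$ and transporting by the scaling action of $\GG_m\cdot\id_V$, which carries $D_c$ isomorphically onto $D_t$, shows that $D_t$ is irreducible, hence equal to the single orbit $H\cdot x$.

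The group theory of the first paragraph and the dimension bookkeeping through diagram \eqref{4} are routine; the one step that needs input not contained in the excerpt is the connectedness (equivalently, irreducibility) of the fibres $D_t$ in the last paragraph, for which I invoke that reduced homogeneous polynomials are non-composite together with a Bertini-type irreducibility statement for generic fibres. I expect that to be the only genuine obstacle.
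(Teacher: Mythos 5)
Your proof is correct, and it takes a route that differs from the paper's at both key points. For the first assertion the paper argues through the module $\cH=\Der(-\log f)$: one first shows $\omega_0(\fh)=\Gamma(V,\cH)_0$, so that $\omega$ restricts to an isomorphism $\fh\otimes\cO\to\cH$, and then diagram \eqref{4}, with $x_1$ taken to be the image of $f$, gives $T_x(\rho(H)x)=\omega_x(\fh)=\bigoplus_{i\geq 2}\KK\del_i=T_xS_j$ directly; openness of the $H$-orbit in the connected set $S_j$ and $\fh_x=0$ follow at once. You instead extract $d\chi\neq 0$ on $\fg_x$ from the normal Euler field and \eqref{6}, deduce $\fg_x\cap\fh=0$ and $\chi(G_x)=\GG_m$, and isolate the single $H$-orbit inside $Kx$ by an irreducibility/dimension argument; this is sound, slightly longer, and makes explicit the character-theoretic mechanism ($\chi$ already surjective on the isotropy group) that the freeness of $\cH$ keeps hidden. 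For the level sets the paper merely asserts that $D_t$ is $(n-1)$-dimensional and connected and concludes $\rho(H)x=D_t$; you actually prove connectedness (a reduced homogeneous polynomial cannot be a composite $p(g)$ with $\deg p\geq 2$, by comparison of leading forms, so Bertini--Krull gives irreducibility of the generic fibre and hence, via the scaling action, of every nonzero fibre), and you also track the possible disconnectedness of $\ker\chi$ through the decomposition $K=\mu_n H$. That Bertini--Krull statement is the only external ingredient you use and is correctly invoked; in effect your write-up supplies a justification for a step the paper leaves implicit.
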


\begin{proof}
As in \cite[Lem.~2.2]{GMNS09}, one shows that $\omega_0(\fh)=\Gamma(V,\cH)_0$.
Then $\omega$ from \eqref{1} induces an isomorphism
\begin{equation}\label{7}
\omega\colon\fh\otimes\cO\to\cH
\end{equation}
and hence $\omega_x(\fh)=\cH/\fm_x\cL$.
In \eqref{4}, we may assume that $x_1$ is the image of $f$.
Then, for each $x\in S_j$, $j=1,\dots,k$, $T_x(\rho(H)x)=\omega_x(\fh)=\bigoplus_{i-2}^n\KK\del_i=T_xS_j$ and hence $\fh_x=0$.
This proves the first statement.


As $D_t\subset V\backslash D=S_0$ for $t\in\KK^*$, $D_t$ is $(n-1)$-dimensional and connected.
Thus $\rho(H)x=D_t$ and the second statement follows.
\end{proof}

\subsection{The reductive case}

\begin{prp}\label{15}
Let $D$ be a reductive linear free divisor in $V$.
Then each $H$-orbit in $D$ contains $0$ in its closure.
In particular, $\{0\}$ is the only closed $H$-orbit in $D$, or $G$-orbit in $V$.
\end{prp}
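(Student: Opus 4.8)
The plan is to exploit the relative invariant $f$ together with the reductivity of $H$ via the Hilbert–Mumford criterion (or, equivalently, the Kempf–Ness/closed-orbit theory for reductive groups). The key point is that $D = \Z(f)$, so every point $x \in D$ satisfies $f(x) = 0$, whereas $f$ is a nonconstant $H$-semi-invariant with character $\chi|_H$ trivial, i.e. $f$ is actually $H$-\emph{invariant} because $\fh = \ker d\chi$ and hence $\chi|_H$ is trivial on the identity component $H$. Therefore $f$ is a genuine invariant function for the $H$-action, and it vanishes on $D$ but not identically on $V$.

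The first step is to recall that for a reductive group $H$ acting linearly on $V$, a point $x$ has $0$ in the closure of its orbit $\ol{\rho(H)x}$ if and only if every $H$-invariant polynomial without constant term vanishes at $x$ (this is the basic fact from geometric invariant theory: $0$ and $x$ are separated by invariants precisely when they lie in different fibres of the quotient $V /\!/ H$, and $0$ lies in the closure of every orbit whose image in $V/\!/H$ is the image of $0$). So I would argue: since $D$ is defined by the single invariant $f$ of positive degree, and $f$ might not generate the whole invariant ring, I need that \emph{all} invariants of positive degree vanish on $D$. Here reductivity enters essentially. The cleanest route: because $(H,\rho,V)$ comes from a linear free divisor, Proposition~\ref{8} tells us the level sets $D_t$ for $t\neq 0$ are closed $H$-orbits, so the invariant-theory quotient map $V/\!/H \to \AA^1$ induced by $f$ is generically injective on the one-dimensional image; combined with reductivity and the fact that $V /\!/ H$ is irreducible (as $V$ is), this forces $V/\!/H$ to be one-dimensional with coordinate ring generated by $f$ up to integral closure, so the fibre over $0$ — which is all of $D$ scheme-theoretically — maps to the single point $[0]\in V/\!/H$. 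Hence every orbit in $D$ has the same image in $V/\!/H$ as $\{0\}$, and since $\{0\}$ is a (the unique) closed orbit in that fibre, it lies in the closure of every orbit in $D$.

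The main obstacle I anticipate is justifying that $f$ generates the invariant ring up to finite extension, i.e. that $\dim V/\!/H = 1$ and the closed fibre is exactly $D$; a priori there could be further invariants cutting $D$ into smaller pieces with several closed orbits. The resolution is to use that $H$ acts with a dense orbit on each $D_t$ (Proposition~\ref{8}) and on $V\setminus D$ (Theorem~\ref{3}.\eqref{3a}), so the generic fibre of $V/\!/H\to\AA^1$ is a single closed orbit; a reductive group action whose generic fibres over a one-dimensional base are single orbits has quotient of dimension exactly $1$, and then irreducibility of $V$ pins down $V/\!/H \cong \AA^1$ with the map being $f$ (up to normalization). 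Once that is in place, the closed fibre over $0$ is set-theoretically $D$, it contains a unique closed orbit, that orbit is $\{0\}$ since $0 \in D$ is visibly a fixed point, and the standard fact that every orbit in a fibre of $V/\!/H$ degenerates to the unique closed orbit in that fibre finishes the argument. The final sentence of the statement — that $\{0\}$ is the only closed $H$-orbit in $D$, equivalently the only closed $G$-orbit in $V$ — is then immediate: any closed orbit in $D$ must coincide with its own closure, hence contain $0$, hence equal $\{0\}$; and the analogous statement for $G$ follows since $G = H\cdot Z$ with $Z$ the (at most one-dimensional, by the center considerations) center acting by scalars, so $G$-orbits in $V$ that are closed and contained in the cone $D\cup\{0\}$ reduce to the $H$-picture.
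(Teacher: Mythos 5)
Your proof is correct and follows essentially the same route as the paper: use Proposition~\ref{8} (the level sets $D_t$, $t\neq 0$, are single closed $H$-orbits) to conclude that every positive-degree homogeneous $H$-invariant is a power of $f$ up to scalar, hence vanishes on $D$, so that $D$ is the null cone of the $H$-action; the standard GIT fact that every orbit in the null cone of a reductive group contains $0$ in its closure then finishes the argument.
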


\begin{proof}
For the action of $H$ on $D$, any invariant homogeneous polynomial must be constant by Proposition~\ref{8}.
So $D$ is the null cone and $0$ is in the closure of each $H$-orbit, see \cite[\S8.1]{Dol03}.
\end{proof}

As the character $\chi$ is non-trivial it is surjective and hence $f\colon V\to\KK$ is surjective.
The following corollary is therefore a consequence of Proposition~\ref{15} and the quotient criterion in \cite[Satz~II.3.4]{Kra84}.

\begin{cor}
If $D$ is a reductive linear free divisor then $f\colon V\to\KK$ is an $H$-quotient with null cone $D$.
In particular, $\KK[V]^H=\KK[f]$.
\end{cor}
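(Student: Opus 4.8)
The plan is to reduce the statement to the two ingredients that have just been assembled: Proposition~\ref{15}, which identifies $D$ with the null cone of the $H$-action on $V$, and the general quotient criterion for reductive group actions (\cite[Satz~II.3.4]{Kra84}). First I would record that, since $D$ is a \emph{reductive} linear free divisor, $H$ is reductive and the quotient morphism $\pi\colon V\to V/\!\!/H=\Spec\KK[V]^H$ exists as an affine GIT quotient; it is the universal map constant on $H$-orbits, and it separates closed orbits. The claim ``$f\colon V\to\KK$ is an $H$-quotient'' then amounts to showing that the inclusion $\KK[f]\subseteq\KK[V]^H$ is an equality, after which $f$ is, up to the isomorphism $\KK[f]\cong\KK[\KK]$, literally the quotient morphism and $D=f^{-1}(0)$ is its fibre over the origin.

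To prove $\KK[V]^H=\KK[f]$ I would argue as follows. By \eqref{5} the polynomial $f$ is a relative invariant for the character $\chi$, so $f$ is not $H$-invariant as stated — rather, it is $H$-invariant because $H=\GL(V)_f^\circ\subseteq\ker\chi$, hence $\chi|_H$ is trivial and $f(\rho(h)x)=f(x)$ for $h\in H$; thus $\KK[f]\subseteq\KK[V]^H$. For the reverse inclusion I would invoke the hypotheses of \cite[Satz~II.3.4]{Kra84}: one needs $V$ normal (true, it is affine space), $f$ surjective (noted in the paper just before the corollary, since the non-trivial character $\chi$ is surjective so $f$ is too), and the generic fibre of $f$ irreducible and reduced of the expected dimension $n-1$, together with the special fibre $f^{-1}(0)=D$ being the null cone. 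Proposition~\ref{8} gives exactly that each $D_t=\Z(f-t)$ for $t\ne0$ is a single closed $H$-orbit, in particular irreducible of dimension $n-1=\dim H$ (the isotropy being finite), and Proposition~\ref{15} gives $f^{-1}(0)=D=\mathcal N_H(V)$, the null cone. These are precisely the hypotheses under which the cited criterion concludes that $f$ is the algebraic quotient, i.e.\ $\KK[V]^H=\KK[f]$ and $V/\!\!/H\cong\KK$ via $f$.

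I would then close by translating the ring statement back into geometry: because $f$ realises the quotient morphism $V\to V/\!\!/H\cong\AA^1$, its fibre over $0$ is by definition the null cone, which by Proposition~\ref{15} is $D$; so $D$ is the null cone and $f$ is an $H$-quotient with null cone $D$, as asserted. It is worth remarking that no further verification of ``good quotient'' properties is needed once the cited criterion applies, since for a reductive group acting on an affine variety the categorical quotient is automatically geometric on the locus of closed orbits, and Propositions~\ref{8} and~\ref{15} describe all orbits explicitly.

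The main obstacle is matching the hypotheses of the precise form of \cite[Satz~II.3.4]{Kra84} being quoted: one must make sure that ``generic fibre is a single closed orbit with finite stabiliser, special fibre is the null cone, total space normal, map surjective'' is exactly the package Kraft's criterion takes as input to conclude $\KK[V]^H=\KK[f]$. Everything feeding into that package is already in hand (normality of $V$ is trivial, surjectivity of $f$ follows from surjectivity of $\chi$, and Propositions~\ref{8} and~\ref{15} supply the fibrewise orbit structure), so the proof is short; the only real care is in the citation bookkeeping and in noting that $f$ \emph{is} $H$-invariant even though it is only a relative invariant for $G$.
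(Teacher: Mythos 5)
Your proposal is correct and follows essentially the same route as the paper: the paper's proof is exactly the one-line observation that $f$ is surjective (since $\chi$ is), combined with Proposition~\ref{15} (which identifies $D$ as the null cone) and the quotient criterion \cite[Satz~II.3.4]{Kra84}. Your additional remarks — that $f$ is genuinely $H$-invariant because $H\subseteq\ker\chi$, and that Proposition~\ref{8} supplies the closed-orbit structure of the generic fibres $D_t$ — are correct elaborations of the hypotheses the cited criterion needs, not a different argument.
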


\begin{lem}\label{85}
For a reductive linear free divisor $D$ with group $G$, the number of irreducible components of $D$ equals the dimension of the center of $G$.
In particular, $D$ is irreducible if and only if $H$ is semisimple.
\end{lem}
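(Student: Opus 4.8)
The plan is to relate the irreducible components of $D$ to relative invariants of the prehomogeneous space $(G,\rho,V)$ via the usual correspondence, and then to identify the space of characters supporting relative invariants with the dual of the center of $\fg$.

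First I would recall that for a prehomogeneous vector space with a single dense orbit whose complement $D$ is a divisor, each irreducible component $D_1,\dots,D_k$ of $D$ has a defining reduced polynomial $f_i$, and each $f_i$ is a relative invariant, associated to a character $\chi_i\in\X^*(G)$ via $f_i(\rho(g)x)=\chi_i(g)f_i(x)$. Moreover, every relative invariant is, up to scalar, a monomial $\prod f_i^{m_i}$, so the $\chi_i$ form a basis of the lattice $\Lambda\subseteq\X^*(G)$ of characters admitting a nonzero relative invariant; in particular $\operatorname{rk}\Lambda=k$. (This is the standard theory of relative invariants of prehomogeneous vector spaces; here $f=f_1\cdots f_k$ and $\chi=\chi_1\cdots\chi_k$ with the $f_i$ distinct since $D$ is reduced.) The differentiated form \eqref{6} shows that the associated infinitesimal characters $d\chi_i\in\fg^*$ are the relevant linear-algebraic shadows.

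Next I would pass to Lie algebras and use reductivity decisively. Since $D$ is reductive, $\fg=\fz\oplus[\fg,\fg]$ where $\fz$ is the center, and $[\fg,\fg]$ is semisimple, hence has no nontrivial characters: $d\chi_i$ vanishes on $[\fg,\fg]$ for every $i$, so each $d\chi_i$ factors through $\fz$. Thus $\dim_\KK(\KK\text{-span of }d\chi_1,\dots,d\chi_k)\le\dim\fz$. For the reverse inequality I would argue that the $d\chi_i$ are linearly independent — which follows because the $\chi_i$ generate a rank-$k$ sublattice of $\X^*(G)$ and $G$ is connected, so $d\chi_1,\dots,d\chi_k$ are linearly independent in $\fg^*$ — and that they in fact span $\fz^*$. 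For the latter, the cleanest route is to show that every character of $\fg$ (equivalently, every linear form vanishing on $[\fg,\fg]$, i.e.\ every element of $\fz^*$) is $d$ of a relative-invariant character: on the reductive group $G$, the restriction map $\X^*(G)\otimes\QQ\to\fz^*$ is an isomorphism onto $\fz^*$ up to the identification with $\QQ$-points, and by Proposition~\ref{8} together with Proposition~\ref{15} the only $H$-invariants are powers of $f$, which constrains the relative invariants for $G$ to be exactly monomials in the $f_i$; counting dimensions then forces $k=\dim\fz$. The second assertion is then immediate: $D$ is irreducible, i.e.\ $k=1$, iff $\dim\fz=0$ (note $\chi$ is nontrivial so $k\ge1$ always, hence $\dim\fz\ge1$), i.e.\ iff $\fg=[\fg,\fg]$ is semisimple, which by the discussion after the definition of ``reductive'' is equivalent to $H$ being semisimple.

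The main obstacle I anticipate is the precise bookkeeping in the last step: showing not merely that each $d\chi_i\in\fz^*$ and that the $d\chi_i$ are independent, but that they actually span all of $\fz^*$ — equivalently, that $\dim\fz$ does not exceed the number of components. This requires knowing that the ring of relative invariants of $(G,\rho,V)$ is "as large as possible" given the center, which I would extract from the structure of $G$-orbit closures: the components of $D$ are precisely the irreducible components of the complement of the open orbit, and for a prehomogeneous vector space the number of these equals the rank of the group of relative-invariant characters, which for connected reductive $G$ equals $\dim\fz$ because a semisimple group acting with a dense orbit on a vector space admits no nonconstant invariant and its only relative invariants come from $\X^*$ which is torsion modulo the center. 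Assembling these facts carefully — rather than any single hard computation — is where the care is needed.
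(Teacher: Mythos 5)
Your framework is the right one — identify components of $D$ with a basis of the lattice $\X_0^*(G)$ of characters admitting relative invariants, and compare its rank with $\dim\fz$ — and your argument for the inequality $k\le\dim\fz$ (each $d\chi_i$ kills $[\fg,\fg]$, and the $d\chi_i$ are independent since $G$ is connected) is fine. The genuine gap is the reverse inequality $k\ge\dim\fz$, i.e.\ that the $d\chi_i$ actually \emph{span} $\fz^*$. You flag this as the delicate point, but the justification you offer does not close it: knowing $\KK[V]^H=\KK[f]$ (your appeal to Propositions~\ref{8} and \ref{15}) only describes $H$-invariants — and the individual $f_i$ need not be $H$-invariant, since $\fh=\ker d\chi=\ker\sum_i d\chi_i$ does not lie in each $\ker d\chi_i$ — while the sentence ``its only relative invariants come from $\X^*$ which is torsion modulo the center'' merely restates $\rk\X^*(G)=\dim\fz$, which is the easy containment. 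What is needed is a reason why every character of $G$ (up to finite index) is realized by a relative invariant, and this is \emph{false} for general reductive prehomogeneous spaces: for $\GL_2$ acting on $\KK^2$ the complement of the open orbit is $\{0\}$, so $k=0<1=\dim Z$. So some input specific to linear freeness must enter.

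That input is the finiteness of the generic isotropy group $G_p$, $p\in V\setminus D$, from Theorem~\ref{3}.\eqref{3a}. The paper's proof uses it as follows: $\X_0^*(G)$ is exactly the set of characters trivial on $G_p$ (such a character descends along the orbit map $G\to Gp=S_0$ to a function on $S_0$, which is a rational relative invariant and hence a monomial in the $f_i$); since $G_p$ is finite, $\chi^{\#G_p}$ is trivial on $G_p$ for \emph{every} $\chi\in\X^*(G)$, giving an injection $\X^*(G)\into\X_0^*(G)$ and hence $\rk\X^*(G)=\rk\X_0^*(G)=k$. You should add this step. Separately, your final sentence contains a slip: with $\fz$ the center of $\fg$ one has $\dim\fz\ge1$ always (because of $\KK\id$), and $D$ irreducible means $k=\dim\fz=1$, which is equivalent to $\fh$ (not $\fg$) being semisimple; as written you assert both $\dim\fz=0$ and $\dim\fz\ge1$.
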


\begin{proof}
We assume that $G$ is reductive, so $G=Z\cdot S$ is an almost direct product of the connected center $Z$ and the derived group $S$ which is semisimple. 
Each irreducible component $D_i=\ol S_i$, $i=1,\dots,k$, of $D$ is defined by a relative invariant polynomial $f_i\in\KK[V]$ with an associated character $\chi_i$ as in \eqref{5}.
These characters form a basis of the group $\X_0^*(G)$ of all characters 
corresponding to relative invariants (\cite[\S4, Lem.~4]{SK77}),
and the product of the $f_i$ is a reduced equation of $D$.
For any point $p\in S_0=V\backslash D$, this group $\X_0^*(G)$ is the subgroup of $\chi\in\X^*(G)$ with $\ker\chi\supseteq G_p$ (which is independent of the choice of $p\in S_0$ since all the corresponding isotropy groups are conjugate). 
For the orbit map $G\to Gp=S_0$ identifies the $G_p$-invariant functions on $G$ with functions on $S_0$. 
So a character $\chi$ that is trivial on $G_p$ gives rise to a function $f$ on $S_0$, which, as is easy to check, is then a relative invariant with character $\chi$.

Taking the $\#G_p$-th power, induces the second inclusion in the chain 
\[
\X^*_0(G)\subseteq\X^*(G)\into\X^*_0(G),
\]
which proves the second equality in 
\begin{equation}\label{91}
k=\rk\X_0^*(G)=\rk\X^*(G)=\rk\X^*(Z)=\dim Z.
\end{equation}
The third equality is proved similarly using 
finiteness of $Z\cap S$ and that $\X^*(S)=1$ by semisimplicity of $S$.
\end{proof}

\begin{thm}\label{40}
For a reductive linear free divisor $D\subset V$ with group $G$, the irreducible $G$-modules in $V$ are pairwise non-isomorphic, and their number equals the number of irreducible components of $D$.
\end{thm}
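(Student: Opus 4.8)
The plan is to decompose $V$ into $G$-isotypic pieces and show that the discriminant $f$ factors accordingly, matching up the irreducible summands of $V$ with the irreducible components $D_i$ of $D$. Write $G = Z \cdot S$ as an almost direct product of its connected center $Z$ and semisimple derived group $S$, as in the proof of Lemma~\ref{85}. Decompose $V = \bigoplus_{i=1}^m W_i$ into $G$-isotypic components; since $Z$ is central, $Z$ acts on each $W_i$ through a character $\lambda_i$, and $S$ acts on $W_i$ with no trivial summand (otherwise $G$ would have a fixed vector, contradicting prehomogeneity, as the generic isotropy is finite by Theorem~\ref{3}.\eqref{3a}). The first task is to show each $W_i$ is actually \emph{irreducible} as a $G$-module, i.e.\ that the isotypic multiplicities are all $1$; the second is to show the characters $\lambda_i$ are distinct (this is already the ``pairwise non-isomorphic'' claim once irreducibility is known), and that $m = k$.

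First I would establish the dimension count that forces everything. By Lemma~\ref{85}, $k = \dim Z$, and the characters $\chi_1,\dots,\chi_k$ of the irreducible components form a basis of $\X_0^*(G) \otimes \QQ \cong \X^*(Z) \otimes \QQ$. On the other hand, the Euler vector field $\sum_i x_i \del_i$ lies in $\fl$, coming from a central element of $\fg$; more generally, the image $\omega_0(\fz)$ of the Lie algebra $\fz$ of $Z$ consists of the vector fields acting on each isotypic block $W_i$ as a scalar $d\lambda_i$. The key point is that a basis $\delta_1,\dots,\delta_n$ of $\fl$ can be chosen adapted to the isotypic decomposition, and the determinant \eqref{42} computing $f$ then block-decomposes. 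Concretely, because $S$ has no trivial summand in any $W_i$, the restriction of the representation $d\rho$ to each block is governed by a prehomogeneous action of (a quotient of) $G$ on $W_i$, and one shows the restriction $f_i$ of $f$ — equivalently the relevant minor of the Saito matrix — is a nonconstant relative invariant on $W_i$ with character a positive multiple of $\lambda_i$. Counting: the $m$ characters $\lambda_1,\dots,\lambda_m$ span $\X^*(Z)\otimes\QQ$, whose rank is $k$, so $m \geq k$; but $f = \prod_{i=1}^m f_i$ up to scalar and $f$ has exactly $k$ irreducible factors (and each $f_i$ is a product of at least one of them with no overlap between distinct blocks, since they involve disjoint sets of variables), forcing $m \leq k$. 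Hence $m = k$, each $f_i$ is irreducible and a single $D_i$, each block $W_i$ is irreducible as a $G$-module (an isotypic block of multiplicity $\geq 2$ would carry a $\GL_2$ worth of relative invariants, contradicting $\KK[V]^{H}=\KK[f]$ from the Corollary and $\operatorname{rk}\X_0^*(G) = k$), and the $\lambda_i$ are distinct.

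I expect the main obstacle to be the step showing that the determinant $f$ genuinely factors through the isotypic decomposition with \emph{nonconstant} factors on each block — i.e.\ that no block is ``wasted.'' The cleanest route is: if some $W_i$ received no relative invariant, then the generic $G$-orbit would not be dense (the orbit would be constrained to a proper subvariety cut out on the complementary blocks, or conversely $G$ would act on $W_i$ with a dense orbit but \emph{also} fix a relative invariant only on the complement, making the center act by a relation $\sum c_i \lambda_i = 0$ with $c_i \geq 0$ not all zero among the ``invariant-free'' blocks) — this contradicts either prehomogeneity or the independence of $\chi_1,\dots,\chi_k$. Making this precise requires care about whether $S$ alone can be prehomogeneous on a sub-sum of blocks; here one uses that $f$ is an $H$-quotient with $\KK[V]^H = \KK[f]$, so any $S$-invariant is a power of $f$ up to a $Z$-character, which pins down exactly one relative invariant per isotypic class. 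Once that accounting is airtight, the equality of the two counts and the pairwise non-isomorphism follow formally from Lemma~\ref{85} and \eqref{91}.
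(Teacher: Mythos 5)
Your frame — compare $k=\rk\X_0^*(G)=\dim Z$ from Lemma~\ref{85} with the number of summands, and get one inequality from injectivity of $Z\to\GG_m^m$ via the central characters $\lambda_i$ — is sound, and that half essentially matches the paper's argument via the embedding \eqref{41}. But the other half rests on a claim that is false. You assert that $f$ factors as $\prod_i f_i$ with each $f_i$ a nonconstant polynomial involving only the variables of the single block $W_i$, so that distinct blocks contribute disjoint sets of irreducible factors. Irreducible components of a reductive linear free divisor are in general \emph{not} supported on single irreducible summands. Take the $D_4$ quiver with dimension vector $(1,1,1,2)$, i.e.\ three sources of dimension $1$ mapping to a sink of dimension $2$: here $V=\KK^2\oplus\KK^2\oplus\KK^2$ decomposes into three pairwise non-isomorphic irreducible $G$-modules and $D$ has three components, so Theorem~\ref{40} holds, but the components are $\det(a\vert b)=0$, $\det(b\vert c)=0$, $\det(a\vert c)=0$, each involving the variables of \emph{two} of the three summands. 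The restriction of $f$ to a block is identically zero, the Saito matrix does not block-decompose (the semisimple part acts diagonally across all summands), and the counting $m\le k$ you base on disjointness of variables collapses. The multiplicity-one step is likewise not established: ``an isotypic block of multiplicity $\ge 2$ would carry a $\GL_2$ worth of relative invariants'' is not an argument, since a $\GL_2$ in the centralizer does not by itself produce extra relative invariants for $G$.

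The paper closes both gaps with one observation you are missing, and which exploits the maximality of $G$ rather than any factorization of $f$: the centralizer $\Gamma$ of $G$ in $\GL(V)$ permutes the $G$-orbits, hence preserves the open orbit and therefore $D$, hence $\Gamma\subseteq G$ by the definition of $G$ as the largest connected subgroup preserving $D$. Consequently $\Gamma$ lies in the center of $G$ and is commutative; by Schur's lemma $\Gamma\cong\prod_i\GL_{m_i}(\KK)$ with $m_i$ the isotypic multiplicities, so commutativity forces all $m_i=1$, i.e.\ the summands are pairwise non-isomorphic. Moreover $\Gamma$ contains the $\ell$ one-dimensional centers $Z_i$ of the $\GL(V_i)$, giving $\dim Z\ge\ell$ and hence $k\ge\ell$, which together with $k\le\ell$ finishes the proof. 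If you want to salvage your write-up, replace the factorization argument with this centralizer argument.
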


\begin{proof}
We continue using the notation from the proof of Lemma~\ref{85}.
Consider the decomposition of $V$ into irreducible $G$-modules
\begin{equation}\label{43}
V=V_1\oplus\cdots\oplus V_\ell,
\end{equation}
and hence, $G\subset\GL(V_1)\times\cdots\times\GL(V_\ell)$.
Denote by $Z_i\cong\GG_m$ the center of $\GL(V_i)$, and by $\fz_i$ its Lie algebra.
Then $Z$ acts by a character $\sigma_i$ on each $V_i$ and $S\subseteq\SL(V_1)\times\cdots\times\SL(V_\ell)$, since the latter is the derived group of $\GL(V_1)\times\cdots\times\GL(V_\ell)$.
We obtain a homomorphism
\begin{equation}\label{41}
Z/(Z\cap S)\cong G/S\into\frac{\GL(V_1)\times\cdots\times\GL(V_\ell)}{\SL(V_1)\times\cdots\times\SL(V_\ell)}\cong Z_1\times\cdots\times Z_\ell\cong\GG_m^\ell
\end{equation}
where $Z\cap S$ is finite.
In particular, $k\le\ell$ by \eqref{91} and \eqref{41}, which is indeed a well known fact for more general reductive prehomogeneous vector spaces.

To see the opposite inequality in our case, it suffices to show that $Z_i\cong 1\times\cdots\times1\times Z_i\times1\times\cdots\times1\subset\GL(V)$ lies in $G$ for all $i=1,\dots,\ell$, which makes \eqref{41} an isomorphism.
To this end, let $\Gamma$ denote the centralizer of $G$ in $\GL(V)$.
As $\Gamma$ permutes the $G$-orbits, it must preserve the open orbit $S_0=V\backslash D$ and hence $D$.
Thus, $\Gamma\subset G$, by definition of $G$, and hence
\[
Z_1\times\cdots\times Z_\ell\subset\Gamma=Z.
\]
In particular, $\Gamma=Z$ is commutative and $Z\cap S$ in \eqref{41} is trivial.
But then the $V_i$ must be pairwise non-isomorphic.
Indeed, $Z_i\times 1$ does not commute with switching factors in $V_i\oplus V_i$.
\end{proof}

\begin{rmk}\label{93}
Theorem~\ref{40} and its proof applies to the more general case where $(G,\rho,V)$ is a reductive prehomogeneous vector space with (possibly non-reduced) discriminant $D$ if $\rho(G)$ contains its centralizer in $\GL(V)$.
In particular this holds if $\rho(G)$ contains all linear transformations preserving $D$.
\end{rmk}

\begin{exa}
Consider $G=\GL_n(\KK)$ acting on $V=\gl_n(\KK)$ by left-multiplication.
The discriminant is the non-reduced divisor $D$ defined by $\det^n$. 
But as $G$-module, $V$ decomposes as a direct sum of the $n$ ``column subspaces''. 
So the equality of Theorem \ref{40} does not hold in general for discriminants in prehomogeneous vector spaces. 
In view of Remark~\ref{93}, what fails here is that the right-multiplications, which do preserve $D$, are not all in $G$.
However, in this example the number of components of $D$, \emph{counting multiplicity}, is still equal to the number of irreducible summands in the representation. 
\end{exa}

\subsection{Special cases}\label{63}

Based on Theorem~\ref{40} and the classification of irreducible prehomogeneous 
vector spaces by Sato and Kimura~\cite{SK77}, we can give a complete description of all semisimple (i.e.\ irreducible reductive) linear free divisors.

In \cite[\S2]{SK77}, Sato and Kimura introduce the notion of \emph{castling transformation}. 
Let $(\SL_n,\Lambda_1,V(n))$ denote the standard $n$-dimensional representation of $\SL_n$.
Two representations $(G,\rho,V)$ and $(G',\rho',V')$ are said to be castling transformations of one another if there exists a third representation $(\tilde G,\tilde\rho, V(m))$ such that 
\begin{equation}\label{75}
(G,\rho,V)\simeq(\tilde G\times\SL_n,\tilde\rho\otimes\Lambda_1,V(m)\otimes V(n))
\end{equation}
and
\begin{equation}\label{79}
(G',\rho',V')\simeq(\tilde G\times\SL_{m-n},\tilde\rho^*\otimes\Lambda_1,V(m)^*\otimes V(m-n))
\end{equation}
where $\tilde\rho^*$ is the contragredient representation of $\tilde\rho$ on the dual vector space $V(m)^*$ of $V(m)$. 
Two representations $(G,\rho,V)$ and $(G',\rho',V')$ are said to be in the same {\em castling class}, written $(G,\rho,V)\sim (G',\rho',V')$, if they are related by a finite number of castling transformations. 
In \cite[\S4, Prop.~18]{SK77}, it is shown that there is a one-to-one correspondence between relative invariants of representations related in this way, which, moreover, preserves the property of irreducibility. 
In addition, when $(G,\rho,V)$ and $(G',\rho',V')$ are related as in \eqref{75} and
\eqref{79}, and relative invariants $f$ and $f'$ correspond to one another, 
there is an integer $d$ such that $\deg f=nd$ and $\deg f'=(m-n)d$. 

We briefly review the proof from \cite{SK77}.
Identify $V(m)\otimes V(n)$ with the space $M(m,n)$ of $m\times n$ matrices, and $V(m)^*\otimes V(m-n)$ with $M(m,m-n)$. 
The representations $\tilde\rho\otimes\Lambda_1$ and $\tilde\rho^*\otimes\Lambda_1$ are defined by
\[
(g_1,g_2)\cdot (x_{i,j})=g_1(x_{i,j})g_2^t,\quad(g_1,g_2)\cdot (y_{i,j})=(g_1^t)^{-1}(y_{i,j})g_2^t,
\]
respectively.
Let $\SI(\tilde\rho\otimes\Lambda_1)$ and $\SI(\tilde\rho^*\otimes\Lambda_1)$
be the algebras of polynomial relative invariant for $\tilde\rho\otimes\Lambda_1$ and for $\tilde\rho^*\otimes\Lambda_1$ respectively. 
Any member $f$ of $\SI(\tilde\rho\otimes\Lambda_1)$ must in particular be an absolute invariant for $\Lambda_1$, and thus, by e.g.\ \cite[p.~45]{Wey97}, must be a polynomial $F$ in the $n\times n$ minors of $(x_{i,j})$. 
One should think of $F$ as a polynomial in the homogeneous coordinate ring of the Pl\"ucker embedding of $\Gr(n,m)$ in $\PP\bigwedge^n\KK^m$.
Similarly, any member of $\SI(\tilde\rho^*\otimes\Lambda_1)$ must be a polynomial in the $(m-n)\times(m-n)$ minors of $(y_{i,j})$. 
For any sequence $I=i_1,\dots,i_n$ of integers in $\{1,\dots,m\}$, let $X_{i_1,\dots,i_n}$ denote the determinant of the matrix formed by rows $i_1,\dots,i_n$ of $(x_{i,j})$. 
Similarly, let $Y_{j_1,\dots,j_{m-n}}$ denote the determinant of rows $j_1,\dots,j_{m-n}$ of $(y_{i,j})$. 
There is a natural signed bijection between these two sets of minors,
\[
X_I\mapsto Y'_I:=\sign(I,I^c)Y_{I^c}
\]
where $|I|=n$, $I^c$ is the complement of $I$ in $\{1,\dots, m\}$, and $\sign(I,I^c)$ is the sign of the permutation $I,I^c$ of $1,\dots,m$.
This bijection gives rise to an algebra isomorphism
\begin{equation}\label{80}
\Phi\colon\KK[(X_I)_{|I|=n}]\to \KK[(Y_J)_{|J|=m-n}]
\end{equation}
sending $F((X_I))$ to $F((Y'_I))$. 
It is well defined because the relations between the $X_I$ are the same as the relations between the $Y'_J$: 
both are the Pl\"ucker relations, generating the ideals of the (isomorphic) embeddings of the Grassmannians $\Gr(n,m)$ and $\Gr(m-n,m)$ in $\PP\bigwedge^n\KK^m$ and in $\PP\bigwedge^{m-n}\KK^m$ respectively. 
Sato and Kimura show that $\Phi$ is $\tilde G$-equivariant, and from this,
\cite[\S4, Prop.~18]{SK77} follows.

The following proposition summarizes results explicitly and implicitly contained in \cite{SK77}. 

\begin{prp}\label{77}
Suppose that $(G,\rho,V)\sim(G',\rho',V')$. 
Then the following statements holds true:
\begin{enumerate}
\item\label{77a} The generic isotropy subgroups of $(G,\rho,V)$ and $(G',\rho',V')$ are isomorphic.
\item\label{77b} $\dim G-\dim V=\dim G'-\dim V'$.
\item\label{77c} If $(G,\rho,V)$ is a prehomogeneous vector space then so is $(G',\rho',V')$.
\item\label{77d} If $(G,\rho,V)$ is a prehomogeneous vector space and the complement of the open orbit is a linear free divisor with group $G$, then the same goes for $(G',\rho',V')$, mutatis mutandis. 
The number of irreducible components of these divisors are the same.
\end{enumerate}
\end{prp}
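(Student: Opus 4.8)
Here is my plan.

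The plan is to establish \eqref{77a} and \eqref{77b}, deduce \eqref{77c} from them, and obtain \eqref{77d} by combining \eqref{77a}--\eqref{77c} with the review of \cite[\S4, Prop.~18]{SK77} above. Since castling classes are generated by single castling transformations and the castling relation is symmetric, it suffices to treat one step, $(G,\rho,V)\simeq(\tilde G\times\SL_n,\tilde\rho\otimes\Lambda_1,V(m)\otimes V(n))$ with $0<n<m$ and $(G',\rho',V')\simeq(\tilde G\times\SL_{m-n},\tilde\rho^*\otimes\Lambda_1,V(m)^*\otimes V(m-n))$. Then \eqref{77b} is the identity $n^2-mn=(m-n)^2-m(m-n)$, since $\dim G-\dim V=\dim\tilde G+(n^2-1)-mn$ and $\dim G'-\dim V'=\dim\tilde G+((m-n)^2-1)-m(m-n)$. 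For \eqref{77a} I would identify the generic isotropy groups with subgroups of $\tilde G$: a generic $x\in V(m)\otimes V(n)=M(m,n)$ has rank $n$, hence a column space $W\in\Gr(n,m)$, and $(\tilde G\times\SL_n)_x$ is isomorphic via $(g,h)\mapsto g$ to the subgroup of $g\in\tilde G$ stabilising $W$ and satisfying one determinant condition on the induced action on $W$; the $\tilde G$-equivariant duality $\Gr(n,m)\cong\Gr(m-n,m)$, $W\mapsto W^{\perp}$ — the very duality underlying $\Phi$ above — should match this with the corresponding subgroup on the $V'$-side. The delicate point is reconciling the two determinant conditions on the two sides; this is exactly the content of the castling discussion in \cite[\S2]{SK77}, which I would cite here. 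Given these, \eqref{77c} is immediate: a representation is prehomogeneous exactly when it has a $G$-orbit of dimension $\dim V$, i.e.\ when $\dim G-\dim V$ equals the dimension of the generic isotropy, and both quantities are castling invariants.

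For \eqref{77d}, assume $(G,\rho,V)$ is prehomogeneous with $D=V\setminus(\text{open orbit})$ a linear free divisor with group $G$. Then $\dim G=\dim V=mn$, the generic isotropy is finite by Theorem~\ref{3}.\eqref{3a}, and the reduced equation of $D$ is a product $f=f_1\cdots f_k$ of pairwise distinct irreducible relative invariants — the basic relative invariants of $G$ — with $\sum_i\deg f_i=mn$. By \eqref{77a} and \eqref{77b} the generic isotropy of $(G',\rho',V')$ is again finite and $\dim G'=\dim V'=:n'=m(m-n)$, so by \eqref{77c} this space is prehomogeneous; write $D'$ for the complement of its open orbit. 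Since $\ker\rho'$ lies in the finite generic isotropy it is finite, so $\omega_0$ maps $\fg'$ isomorphically onto an $n'$-dimensional space of linear vector fields. Fixing a basis $A_1',\dots,A_{n'}'$ of $\fg'$ and setting $\delta_i':=\omega_0(A_i')\in\Gamma(V',\Der(-\log D'))$ (these preserve $D'$ because $G'$ does), the Saito determinant $\tilde f:=\det(\delta_i'(x_j))$ has degree $n'$ and is nonzero, and $\tilde f(v)=0$ precisely when the $\delta_i'(v)$ are dependent, i.e.\ when the $G'$-orbit of $v$ is not open; thus $\Z(\tilde f)=D'$. In particular $D'$ is a hypersurface, and since it is $G'$-invariant, its irreducible components — the zero loci of the irreducible factors of $\tilde f$ — are zero loci of basic relative invariants of $G'$.

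By \cite[\S4, Prop.~18]{SK77} and the preceding discussion, the correspondence of relative invariants sends each $f_i$ to an irreducible relative invariant $f_i'$ of $G'$ with $\deg f_i=n\,d_i$ and $\deg f_i'=(m-n)\,d_i$ for integers $d_i\ge1$, and matches basic relative invariants bijectively; hence $f_1',\dots,f_k'$ are exactly the basic relative invariants of $G'$, pairwise distinct. From $\sum_i n\,d_i=mn$ we get $\sum_i d_i=m$, so $\sum_i\deg f_i'=(m-n)m=n'$. Each $f_i'$ divides $\tilde f$: indeed $\Z(f_i')$ is a proper $G'$-stable closed subset, hence disjoint from the open orbit, so $\Z(f_i')\subseteq D'=\Z(\tilde f)$, and $f_i'$ is irreducible. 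Since the irreducible factors of $\tilde f$ lie among the $f_i'$, we may write $\tilde f=c\prod_i(f_i')^{e_i}$ with $e_i\ge1$, and then the degree count $n'=\deg\tilde f=\sum_i e_i\deg f_i'\ge\sum_i\deg f_i'=n'$ forces all $e_i=1$. Hence $\tilde f=c\,f_1'\cdots f_k'$ is reduced, so by Saito's criterion \cite[Thm.~1.8.(ii)]{Sai80} the $\delta_i'$ form a free $\cO$-basis of $\Der(-\log D')$, and $D'=\Z(\tilde f)=\Z(f_1'\cdots f_k')$ is a linear free divisor with exactly $k$ irreducible components, the same number as $D$.

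It remains to see that $G'$ is the group of $D'$. Since $\delta_1',\dots,\delta_{n'}'$ is a free $\cO$-basis of $\Der(-\log D')$ consisting of linear vector fields, every linear logarithmic vector field is a constant-coefficient combination of them, so $\Gamma(V',\Der(-\log D'))_0=\bigoplus_i\KK\,\delta_i'=\omega_0(\fg')$; by \eqref{26} this identifies $\fg'$ with the Lie algebra of the largest connected subgroup of $\GL(V')$ preserving $D'$, which is therefore $\rho'(G')$ (equal to $G'$ up to the finite kernel of $\rho'$). This gives \eqref{77d}. The only genuinely non-formal ingredient is \eqref{77a}: in \eqref{77d} everything is bookkeeping on top of \cite{SK77}, the pleasant point being that the equality $\deg f=\dim V$ characteristic of linear free divisors propagates — through the degree formula of \cite{SK77} and \eqref{77b} — to $\deg f'=\dim V'$, which is exactly what makes the Saito determinant on the $V'$-side reduced.
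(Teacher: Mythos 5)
Your proposal is correct and follows essentially the same route as the paper: \eqref{77a} is deferred to Sato--Kimura, \eqref{77b} is a direct computation, \eqref{77c} combines the two, and \eqref{77d} rests on the degree-multiplying correspondence of relative invariants under castling to get $\deg D'=\dim V'$ and hence reducedness of the Saito determinant. Your write-up merely makes explicit several steps the paper leaves implicit (the factorization/degree count forcing all exponents to be $1$, and the identification of $G'$ with the full group of $D'$).
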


\begin{proof}\
\begin{asparaenum}

\item is proved in \cite[\S2, Prop.~9]{SK77}.

\item follows from the construction, and   

\item follows from \eqref{77a} and \eqref{77b}.

\item Suppose that $\dim G=\dim V$ and $(G,\rho,V)$ is prehomogeneous. 
Denote the complements of the open orbits in $V$ and $V'$ by $D$ and $D'$. 
If $D$ is a linear free divisor with group $G$, then $\dim G=\dim V=\deg D$, and so by \eqref{77b}, $\dim G'=\dim V'$. 
The defining equation of the discriminant in a prehomogeneous vector space is the product of generators of the semigroup of polynomial relative invariants. 
The isomorphism $\Phi$ of \eqref{80} induces an isomorphism between the respective semigroups of polynomial relative invariants, multiplying each degree by $(m-n)/n$. This shows that also $\deg D'=\dim V'$.
So $D'$ is a linear free divisor with group $G'$.

\end{asparaenum}
\end{proof}

In \cite{SK77}, Sato and Kimura classified all irreducible prehomogeneous vector spaces up to castling transformations. 
We have seen in Theorem \ref{40} that semisimple (i.e.\ irreducible reductive) linear free divisors live in irreducible representations of their group. 
Thus, up to castling transformations, every semisimple linear free divisor appears as the complement of the open orbit in one of the prehomogeneous vector spaces classified in \cite{SK77}. 
We obtain 

\begin{thm}\label{92}
Up to castling transformations, there are only four semisimple (i.e.\ irreducible reductive) linear free divisors:
\begin{enumerate}
\item $D=\{0\}\subset\KK$ with $H=\{e\}$.
\item $D=\Z(y^2z^2-4xz^3-4y^3w+18xyzw-27w^2x^2)\subset\KK^4$ with $G=\GL_2(\KK)$ (and $H=\SL_2(\KK)$) acting naturally on $S^3\KK^2\cong\KK^4$ from \cite[\S5, Prop.~6]{SK77} (see also \cite[Ex.~1.4.(2), Tab.~6.1]{GMNS09}).
\item $D\subset\KK^{12}$ with $G=\SL_3(\KK)\times\GL_2(\KK)$ (and $H=\SL_3(\KK)\times\SL_2(\KK)$) from \cite[\S5, Prop.~11]{SK77}.
\item $D\subset\KK^{40}$ with $G=\SL_5(\KK)\times\GL_4(\KK)$ (and $H=\SL_5(\KK)\times\SL_4(\KK)$) from \cite[\S5, Prop.~15]{SK77}.
\end{enumerate}
\end{thm}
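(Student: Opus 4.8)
The plan is to combine Theorem~\ref{40} with the Sato--Kimura classification of irreducible prehomogeneous vector spaces. First I would recall that by Theorem~\ref{40}, a semisimple linear free divisor lives in an \emph{irreducible} $G$-module $V$, with $G$ reductive and, by Lemma~\ref{85} (or the definition of semisimple), $H$ semisimple, so $G = H\cdot Z$ with $Z\cong\GG_m$ a one-dimensional center (the case $\dim Z=0$ forcing $V=\{0\}$, which is case (1)). Thus $(G,\rho,V)$ is an irreducible reductive prehomogeneous vector space with $\dim G = \dim V = \deg f$, and by Proposition~\ref{77}\eqref{77d} the property of being a linear free divisor with group $G$, together with the number of irreducible components, is a castling invariant. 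So it suffices to scan the Sato--Kimura list of irreducible reduced prehomogeneous vector spaces (one representative per castling class) and pick out exactly those entries for which $\dim G = \dim V$ and the generic isotropy subgroup is finite, equivalently (by Theorem~\ref{3} and Proposition~\ref{8}) those for which the relative invariant $f$ has degree equal to $\dim V$.

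Next I would go through the classification in \cite{SK77} entry by entry. For each irreducible prehomogeneous $(G,\rho,V)$ in their tables, Sato and Kimura record the degree(s) of the basic relative invariant(s); an irreducible prehomogeneous space is the one where there is a single relative invariant, and one checks whether its degree equals $\dim V$. Equivalently, using Proposition~\ref{77}\eqref{77b}, the castling invariant $\dim G - \dim V$ must be $0$; but for \emph{irreducible} prehomogeneous spaces the generic isotropy group is typically semisimple or at least positive-dimensional, so the condition $\dim G=\dim V$ (finite generic isotropy) is very restrictive. A short computation with each family — the $\GL_1$-twisted versions of $\SL_n$-representations on $\bigwedge^2$, $S^2$, $S^3$, spin representations, $E_6$, $E_7$, $G_2$, the two-step cases $\SL_m\times\SL_n$ on $M(m,n)$, $\SL_n\times\SL_2$, $\SL_5\times\GL_4$, and so on — shows that the equality $\deg f=\dim V$ holds only in the four cases listed: the trivial one, $S^3\KK^2$ with $\GL_2$, the twelve-dimensional $\SL_3\times\GL_2$ case, and the forty-dimensional $\SL_5\times\GL_4$ case. (These three nontrivial ones are exactly the entries in \cite{SK77} where the reductive group has a one-dimensional center and the relative invariant has degree matching the dimension; Sato and Kimura's Propositions 6, 11, 15 of their \S5 give the explicit invariants.) Finally I would note that one must also check reducedness of the discriminant: in each of the four cases the listed $f$ is manifestly reduced (it is explicitly written out in (2), and in (3), (4) reducedness follows from irreducibility of $f$ as a polynomial, which is part of the Sato--Kimura data), so each is genuinely a linear free divisor, and conversely each produces one by Proposition~\ref{77}\eqref{77d} in every other member of its castling class.

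I expect the main obstacle to be the bookkeeping in the case analysis: one has to be careful that the Sato--Kimura list is organized up to castling \emph{and} up to the auxiliary $\GG_m$-factors, so for each irreducible reductive prehomogeneous space $(\tilde G,\tilde\rho,\tilde V)$ one should really consider whether \emph{some} central extension $G = \tilde G\cdot\GG_m$ has $\dim G = \dim \tilde V$, i.e.\ whether $\dim\tilde G = \dim\tilde V - 1$; this is the numerical condition to test against the tables. The subtlety is to make sure no entry is overlooked — in particular one should double-check the ``regular'' irreducible prehomogeneous spaces with small isotropy, and confirm that in all but the four cases the generic isotropy is positive-dimensional, so that $\dim G > \dim V$ always and Theorem~\ref{3}\eqref{3a} (finite isotropy on the open orbit) fails. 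Once the numerical sieve is done, identifying the surviving four with the representations in the statement, and reconciling the notation (e.g.\ $S^3\KK^2\cong\KK^4$, and the explicit binary-cubic discriminant), is routine. Everything else — the reduction from ``semisimple linear free divisor'' to ``irreducible reduced prehomogeneous with $\dim G=\dim V$'', and the castling-invariance — is supplied by Theorem~\ref{40} and Proposition~\ref{77}.
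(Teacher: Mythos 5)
Your proposal is correct and follows essentially the same route as the paper: Theorem~\ref{40} reduces to irreducible reductive prehomogeneous vector spaces, Proposition~\ref{77} makes the linear-free-divisor property a castling invariant, and the four cases are then read off from the Sato--Kimura classification (the paper leaves the actual scan of the tables implicit, which you spell out via the numerical criterion $\dim G=\dim V$). Only a cosmetic slip: case (1) is $V=\KK$ with $D=\{0\}$, $G=\GG_m$ and $H=\{e\}$, so its center is still one-dimensional, not the degenerate case $\dim Z=0$.
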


The next result describes the opposite extreme of Abelian linear free divisors.

\begin{thm}
The normal crossing divisors are the only Abelian linear free divisors.
\end{thm}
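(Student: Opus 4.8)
The plan is to show that an Abelian linear free divisor $D\subset V$ must be, up to a linear change of coordinates, the coordinate-hyperplane arrangement $\Z(x_1\cdots x_n)$. The key structural input is that $G$ is a connected Abelian linear algebraic group which is prehomogeneous on $V$ and moreover (by Theorem~\ref{3}) has an open orbit $S_0=V\setminus D$ with \emph{finite} isotropy. First I would argue that $G$ must in fact be a torus: a connected Abelian linear algebraic group is a product $G_s\times G_u$ of its (diagonalizable) semisimple part and its unipotent part; a nontrivial unipotent subgroup acting on $V$ has no nonconstant invariants on any orbit and would force the generic isotropy to be positive-dimensional or would obstruct the orbit from being open with the right dimension count $\dim G=\dim V=n$. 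More precisely, since the generic isotropy is finite and $\dim G=n=\dim V$, the orbit map $G\to S_0$ is dominant with finite fibres; combined with $\X^*_0(G)$ having rank $n=\dim G$ (by Lemma~\ref{85}, as $D$ has $n$ components because it is defined by a polynomial of degree $n$ which is a product of the linear relative invariants — or more directly, because $\fl$ has a basis of $n$ linear vector fields whose determinant $f$ is reduced of degree $n$, forcing $n$ linear factors), the character group of $G$ has rank equal to $\dim G$, which forces $G$ to be a torus $\GG_m^n$ (a unipotent part would contribute to dimension but not to the rank of the character group).

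Next, $(\GG_m^n,\rho,V)$ is a faithful (the isotropy is finite, and replacing $G$ by $G/(\text{finite kernel})$ changes nothing) $n$-dimensional representation of an $n$-torus with an open orbit. Such a representation decomposes into weight spaces $V=\bigoplus_{\lambda}V_\lambda$; the existence of a dense orbit for a torus of dimension $n=\dim V$ forces each weight space to be one-dimensional and the $n$ occurring weights $\lambda_1,\dots,\lambda_n\in\X^*(\GG_m^n)\cong\ZZ^n$ to be a $\QQ$-basis (indeed, up to finite index, a $\ZZ$-basis, using again that the generic isotropy is finite and equals $\{g:\lambda_i(g)=1\ \forall i\}$). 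Choosing coordinates $x_1,\dots,x_n$ dual to these weight lines, the action is diagonal, $\rho(t)\cdot x_i=\lambda_i(t)x_i$, and a basis of $\fl$ is given by the $n$ commuting linear vector fields $x_i\del_i$ (after transporting the weight basis of $\fg$ via $\omega_0$ of \eqref{26}). Then $f=\det(\delta_i(x_j))=\det(\mathrm{diag}(x_1,\dots,x_n))=x_1\cdots x_n$ up to a nonzero scalar, and $D=\Z(x_1\cdots x_n)$ is the normal crossing divisor. Conversely, the normal crossing divisor is visibly a linear free divisor with Abelian group $\GG_m^n$, so the two classes coincide.

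The main obstacle is the passage from ``$G$ connected Abelian'' to ``$G$ a torus'', i.e.\ ruling out a unipotent part, and — once we have a torus — pinning down that the weight spaces are one-dimensional with an (up to finite index) $\ZZ$-basis of weights. Both points rest on the same underlying fact: a faithful representation of an algebraic group $G$ with $\dim G=\dim V$ and finite generic isotropy and an open orbit is extremely rigid. For the torus case this is classical toric-variety input (the open orbit is a torus, $V$ is a toric variety, and $n$ weights spanning a finite-index sublattice give the affine space with diagonal action). For the unipotent exclusion, one notes that a connected unipotent group has only the trivial character, so if $G=T\times U$ with $\dim U>0$ then $\rk\X^*_0(G)\le\rk\X^*(G)=\rk\X^*(T)=\dim T<n$, contradicting $\rk\X^*_0(G)=n$ from Lemma~\ref{85} (and \eqref{91}, whose proof only used reductivity to get the last two equalities of the displayed chain, but the inequality $\rk\X^*_0(G)\le\dim T$ and the count $k=n=\dim G$ suffice here). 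I would present this exclusion as the first lemma, then the torus-representation analysis, then the explicit identification of $f$.
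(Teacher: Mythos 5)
Your reduction to the torus case is circular, and that reduction is where the real content of the theorem lies. To exclude a unipotent factor you invoke $\rk\X^*_0(G)=n$, and to get that you assert that $D$ has $n$ irreducible components, ``because it is defined by a polynomial of degree $n$ which is a product of the linear relative invariants,'' or because a reduced Saito determinant of degree $n$ is ``forced'' to have $n$ linear factors. Neither claim is available at that point: the linearity of the relative invariants (equivalently, $k=n$) is precisely what is being proved, and a reduced determinant of a matrix of linear forms need not split into linear factors at all --- the irreducible degree-$4$ discriminant of binary cubics in Theorem~\ref{92} is exactly such a Saito determinant. In fact, for connected Abelian $G=T\times U$ the part of the proof of Lemma~\ref{85} that does not use reductivity gives $k=\rk\X^*_0(G)=\rk\X^*(G)=\dim T$, so ``$U$ is trivial'' and ``$D$ has $n$ components'' are the \emph{same} unproved assertion; your argument derives each from the other. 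The vaguer alternative (a nontrivial unipotent part ``would force the generic isotropy to be positive-dimensional or would obstruct the orbit from being open'') is not a proof either: unipotent groups can act with open orbits and trivial isotropy, and the dimension count $\dim G=\dim V$ with finite generic isotropy is perfectly consistent with $U\neq 1$ until reducedness of $f$ is brought to bear.

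What is actually needed, and what the paper supplies, is an argument that a nontrivial nilpotent part of $\fl$ forces the Saito determinant \eqref{42} to be non-reduced. The paper takes a commuting basis of $\fl$ consisting of simultaneously diagonalizable elements $\sigma_i$ and nilpotent elements $\nu_j$, diagonalizes the $\sigma_i$, notes that the $\nu_j$ preserve the common eigenblocks of the $\sigma_i$ and can be made simultaneously strictly triangular there, and then reads off from the shape of the Saito matrix that any block of size at least $2$ makes $f$ divisible by $x_1^2$. You need an argument of this kind (or an equivalent one) before your character-group count can run. Once $U=1$ is established, your analysis of the weights of the torus --- one-dimensional weight spaces, weights forming a finite-index sublattice, $f=x_1\cdots x_n$ in diagonalizing coordinates --- is correct and coincides with the endgame of the paper's proof.
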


\begin{proof}
Let $D$ be a linear free divisor.
By \cite[Thm.~6.1]{GMNS09}, $\fl$ has a basis $\sigma_1,\dots,\sigma_s,\nu_1,\dots,\nu_{n-s}$ where the $\sigma_i$ are simultaneously diagonalizable and the $\nu_i$ are nilpotent.
It is sufficient to assume only that $[\sigma_i,\nu_j]=0$.
Then there is a coordinate system $x_1,\dots,x_n$ on $V$ such that the $\sigma_i$ are diagonal and the $\nu_i$ are block diagonal with respect to the largest blocks in which each $\sigma_i$ has a single eigenvalue.
In each such block, the $\nu_i$ can be made simultaneously strictly triangular.
Assume that there is a block of size at least $2\times 2$.
Then the defining equation $f$ of $D$ from \eqref{42} is the determinant of a matrix of the form
\[
\begin{pmatrix}
x_1 & x_2 & \cdots \\
0 & 0 & \cdots \\
\vdots & \vdots & \\ 
0 & 0 & \cdots \\
0 & x_1 & \cdots \\
\vdots & \vdots &  \\ 
0 & x_1 & \cdots \\
\end{pmatrix},
\]
and hence divisible by $x_1^2$.
As $D$ is reduced, all blocks have size $1\times 1$ and hence $s=n$.
But the  $\sigma_1,\dots,\sigma_n$ are linearly independent, and hence we can assume that $\sigma_i=x_i\partial_{x_i}$ and then $f=x_1\cdots x_n$ defines a normal crossing divisor $D$.
\end{proof}

Computer calculations of Saito bases for linear free divisors typically result in vector fields in which each coefficient is a scalar multiple of a single coordinate function. 
This is easily explained by the following lemma, which reveals also a less obvious property. 

\begin{lem}\label{87}
Let $T$ be the torus consisting of the set of diagonal matrices in $G$, and let $\lambda_i\colon T\to\KK^*$ be the character assigning to $t\in T$ its $i$'th diagonal element.  
Suppose that $\lambda_i\neq\lambda_j$ for $i\neq j$.
Then there exists a basis for $\fg$ consisting of matrices with at most one entry in each row and column. 
\end{lem}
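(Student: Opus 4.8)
The plan is to analyze the weight space decomposition of $\fg\subseteq\gl(V)$ under the adjoint action of the maximal torus $T$. Fix coordinates so that $T$ consists of diagonal matrices, and identify $\gl(V)=\bigoplus_{i,j}\KK E_{ij}$, where $E_{ij}$ is the elementary matrix. The torus $T$ acts on $E_{ij}$ by the character $\lambda_i\lambda_j^{-1}$, so $\gl(V)$ decomposes into $T$-weight spaces, each weight space being the span of those $E_{ij}$ with a fixed value of $\lambda_i\lambda_j^{-1}$. Since $\fg$ is the Lie algebra of a subgroup of $\GL(V)$ containing $T$, it is stable under $\Ad(T)$, hence $\fg=\bigoplus_\mu\fg_\mu$ where $\fg_\mu=\fg\cap\gl(V)_\mu$. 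Choosing a basis of $\fg$ adapted to this decomposition, it therefore suffices to show that each weight space $\gl(V)_\mu$ admits a basis of matrices having at most one nonzero entry in each row and column; then intersecting with $\fg_\mu$ and using that $\fg_\mu$ is a coordinate subspace of $\gl(V)_\mu$ gives the result.

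So the heart of the argument is purely combinatorial. For the zero weight $\mu=1$, the relevant $E_{ij}$ are those with $\lambda_i=\lambda_j$; under the hypothesis $\lambda_i\neq\lambda_j$ for $i\neq j$, this forces $i=j$, so $\gl(V)_1$ is spanned by the diagonal matrices $E_{ii}$, which trivially have one entry per row and column. For a nontrivial weight $\mu$, consider the relation $\sim_\mu$ on indices defined by $i\sim_\mu j$ iff $\lambda_i\lambda_j^{-1}=\mu$; the matrices $E_{ij}$ spanning $\gl(V)_\mu$ are exactly those with $i\sim_\mu j$. The key observation is that for fixed $i$ there is at most one $j$ with $\lambda_i\lambda_j^{-1}=\mu$ — because if $\lambda_i\lambda_j^{-1}=\lambda_i\lambda_{j'}^{-1}=\mu$ then $\lambda_j=\lambda_{j'}$, forcing $j=j'$ by hypothesis — and symmetrically for fixed $j$ there is at most one such $i$. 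Hence the set $\{(i,j):\lambda_i\lambda_j^{-1}=\mu\}$ is a partial matching: it already consists of entries no two of which share a row or a column. Thus $\gl(V)_\mu$ itself is spanned by the $E_{ij}$ in this partial matching, each a single-entry matrix, and a fortiori $\fg_\mu$ — being spanned by a subset of these $E_{ij}$ together, possibly, with certain linear combinations — needs a little more care.

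The one place that needs attention is whether $\fg_\mu$ is genuinely spanned by single-entry matrices rather than by nontrivial linear combinations of the $E_{ij}$ in the matching. But since the $E_{ij}$ with $\lambda_i\lambda_j^{-1}=\mu$ are linearly independent and their span is $\gl(V)_\mu$, any subspace $\fg_\mu\subseteq\gl(V)_\mu$ is of the form $\fg_\mu=\bigoplus_{(i,j)\in M}\KK(\sum_k c^{(ij)}_k E_{i_k j_k})$ for some vectors of coefficients; here the subtlety is that such a subspace need not be a coordinate subspace in general. I expect this to be the main obstacle, and the way around it is to observe that each weight space of $\gl(V)$ relevant here is at most the span of a matching, so we should group the indices into $T$-orbits of equal characters — but under the hypothesis these orbits are singletons, so every weight space of $\gl(V)$ is one-dimensional or is the diagonal, and a subspace of a space spanned by a single $E_{ij}$ is either $0$ or that whole line. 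Concretely: for each nontrivial $\mu$ and each pair $(i,j)$ in the $\mu$-matching, $\gl(V)_\mu\cap\KK E_{ij}$ is a coordinate line, and since distinct pairs in the matching give distinct $T$-weights only if... no — they all give the same weight $\mu$. So instead I would refine: decompose further using a second generic one-parameter subgroup, or simply note that when $\fg_\mu$ has dimension $r$ inside an $s$-dimensional span of single-entry matrices sitting in disjoint rows and columns, one can choose a basis of $\fg_\mu$ in row-echelon form with respect to the ordering of the pairs; because the supporting matrices occupy disjoint rows and columns, each echelon basis vector, after clearing, is supported on a single pair $(i_k,j_k)$ — there is no interaction forcing multiple nonzero entries, as the pivots lie in distinct positions. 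This echelon/pivot argument, carried out pair by pair, yields the desired basis of single-entry matrices for $\fg$, completing the proof.
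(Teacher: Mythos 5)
Your weight-space decomposition is exactly the paper's argument, and your proof is in fact already complete by the end of your second paragraph; the difficulty you then try to address does not exist, because you have misread the conclusion. The lemma asks for a basis of matrices each having \emph{at most one nonzero entry in each row and in each column} (so that the support is a partial matching), not for a basis of matrices with a single nonzero entry. You have shown that for each nontrivial character $\mu$ the set $\{(i,j):\lambda_i\lambda_j^{-1}=\mu\}$ is a partial matching, and that the trivial weight space of $\gl(V)$ is the diagonal. Hence \emph{every} element of $\gl(V)_\mu$, and a fortiori every element of $\fg_\mu$, is supported on a set of positions no two of which share a row or a column. So any basis of $\fg$ adapted to the decomposition $\fg=\bigoplus_\mu\fg_\mu$, i.e.\ any $\Ad(T)$-eigenbasis, already has the required property, whether or not its members are single elementary matrices. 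This is precisely the paper's proof.

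Your final paragraph should be deleted: besides being unnecessary, its key claim is false. A subspace of the span of a partial matching need not be a coordinate subspace, and row reduction does not make it one: the line spanned by $E_{12}+E_{34}$ inside $\KK E_{12}\oplus\KK E_{34}$ is already in echelon form, and its basis vector is supported on two pairs. If the lemma genuinely demanded single-entry basis matrices, this would be an unfixable gap (indeed the statement would then be false for such a $\fg$). As it stands, $E_{12}+E_{34}$ has at most one entry in each row and each column, so nothing beyond the partial-matching observation is needed.
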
 

\begin{proof}
Under the adjoint action of $T$, $\fg$ decomposes into a direct sum of eigenspaces. 
If $a=(a_{i,j})_{1\leq i,j\leq n}\in\fg$ is an eigenvector for $\Ad$ then $(\Ad_t(a))_{i,j}=\lambda_i(t)\lambda_j(t)^{-1}a_{i,j}$, and so the quantity $\lambda_i(t)\lambda_j(t)^{-1}$ is the same for all $i,j$ such that $a_{i,j}\neq 0$. 
If $a_{i,j}\neq 0 \neq a_{i,k}$ for some $i$ and some  $j\neq k$ then $\lambda_j(t)=\lambda_k(t)$ for all $t$, contradicting the hypothesis. 
Similarly if $a_{i,j}\neq 0\neq a_{k,j}$ for some $j$ and some $i\neq k$.
Thus an eigenbasis of $\fg$ has the required property.
\end{proof}

As any torus can be diagonalized, Lemma~\ref{87} applies in particular to a maximal torus in $G$.
Both a high multihomogeneity and a high diversity of weights contribute to fulfill the hypothesis of Lemma~\ref{87}. 
For instance, a single homogeneity with all weights different, or a maximal multihomogeneity with all weights zero or one (normal crossing divisor) suffice to apply the lemma.
Lemma~\ref{87} clearly implies the following

\begin{prp}\label{88}
Under the hypothesis of Lemma~\ref{87}, $\cL$ has a Saito basis as in \eqref{95} with the following two properties:
\begin{enumerate}
\item Each coefficient of each vector field in the basis is a scalar multiple of a single coordinate function.
\item For each vector field in the basis, no two of its non-zero coefficients are divisible by the same coordinate function.
\end{enumerate}
\end{prp}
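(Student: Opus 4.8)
The plan is to transport the matrix basis furnished by Lemma~\ref{87} across the isomorphism $\omega_0$ of \eqref{26} and to read the two asserted properties directly off the shape of the resulting logarithmic vector fields.

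First I would record the explicit form of $\omega_0$: writing $x$ and $\del$ as column vectors, one has for $A=(a_{i,j})\in\gl(V)$ that
\[
\omega_0(A)=x^tA^t\del=\sum_{i,j}a_{j,i}\,x_i\del_j,
\]
so the coefficient of $\del_j$ in $\omega_0(A)$ is the linear form $\sum_i a_{j,i}x_i$ built from the $j$-th row of $A$, while the coordinate $x_i$ occurs in this coefficient exactly when the $(j,i)$ entry of $A$ is non-zero, i.e.\ the $i$-th column of $A$ controls in which $\del_j$-coefficients $x_i$ appears. Since $D$ is a linear free divisor, $\omega_0\colon\fg\to\fl$ is an isomorphism, so the image under $\omega_0$ of any $\KK$-basis of $\fg$ is a Saito basis of $\cL$ as in \eqref{95}.

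Next I would invoke Lemma~\ref{87}: under the hypothesis on the characters $\lambda_i$, there is a basis $a^{(1)},\dots,a^{(n)}$ of $\fg$ in which each matrix $a^{(k)}$ has at most one non-zero entry in each row and in each column. Set $\delta_k:=\omega_0(a^{(k)})$. For property~(1), fix $k$ and $j$: the ``at most one entry per row'' property says at most one $a^{(k)}_{j,i}$ is non-zero, so the coefficient of $\del_j$ in $\delta_k$ is a scalar multiple of a single coordinate, or is zero. For property~(2), fix $k$ and suppose the coefficients of $\del_j$ and $\del_{j'}$ in $\delta_k$ are non-zero scalar multiples of $x_i$ and $x_{i'}$; then $a^{(k)}_{j,i}\neq0\neq a^{(k)}_{j',i'}$, and if $i=i'$ the $i$-th column of $a^{(k)}$ would carry two non-zero entries, contradicting ``at most one entry per column''. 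Hence $x_i\neq x_{i'}$, as required.

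The only point that needs genuine attention is the index bookkeeping created by the transpose in $\omega_0(A)=x^tA^t\del$ — one must be careful whether the rows or the columns of $A$ govern the $\del_j$-coefficients — but once this is pinned down the two conditions of Lemma~\ref{87} translate essentially verbatim into~(1) and~(2). I do not expect any real obstacle here: the proposition is little more than Lemma~\ref{87} rephrased in the language of vector fields.
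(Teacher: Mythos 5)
Your argument is correct and is exactly the computation the paper has in mind: the paper simply asserts that Lemma~\ref{87} ``clearly implies'' Proposition~\ref{88}, and your write-up supplies the routine verification, correctly tracking that the $j$-th row of $A$ gives the $\del_j$-coefficient of $\omega_0(A)$ (hence property~(1) from ``one entry per row'') while the $i$-th column controls where $x_i$ appears (hence property~(2) from ``one entry per column''). No gaps.
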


\begin{prp}
Proposition \ref{88} applies to all irreducible reductive linear free divisors.
\end{prp}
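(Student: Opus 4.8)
The plan is to verify that the hypothesis of Lemma~\ref{87} is satisfied for every semisimple linear free divisor, and then, since by Theorem~\ref{40} an arbitrary irreducible reductive linear free divisor has irreducible $V$ and hence is semisimple, we are done. So the entire content is a statement about the four divisors listed in Theorem~\ref{92}, or rather about their castling classes: by Proposition~\ref{77}.\eqref{77d} the property of being a linear free divisor is preserved by castling, but the diagonal torus and its weights are \emph{not} castling-invariant, so I cannot simply reduce to the four explicit models. Instead I would argue directly on each castling class.

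First I would set up the reduction. By Theorem~\ref{40}, if $D$ is an irreducible reductive linear free divisor with group $G$, then $V$ is an irreducible $G$-module, so $D$ is semisimple; replacing $G$ by its image we may assume $G\subseteq\GL(V)$ and, by Remark~\ref{93} / the definition of $G$, that $G$ contains its own centralizer in $\GL(V)$, in particular $G\supseteq\GG_m\cdot\id_V$. Now let $T\subseteq G$ be a maximal torus and $\lambda_1,\dots,\lambda_n$ its weights on $V$. What must be shown is that the $\lambda_i$ are pairwise distinct. Suppose not, say $\lambda_i=\lambda_j$ for some $i\ne j$; then the two-dimensional weight space $V(\lambda_i)$ is preserved by $T$ acting by a scalar, so any element of $\GL(V(\lambda_i))$ extended by the identity on a $T$-stable complement centralizes $T$. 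The strategy is to leverage this extra symmetry against the characterization in Theorem~\ref{3}: the presence of a positive-dimensional group of automorphisms of $(V,D)$ beyond $G$ that does \emph{not} lie in $G$ would contradict maximality of $G$, while if it does lie in $G$ it typically contradicts the finiteness of generic isotropy in Theorem~\ref{3}.\eqref{3a} (a dimension count: $\dim G=\dim V=n$ and the open orbit forces generic isotropy to be finite, so $G$ cannot contain a torus acting with a repeated weight and still act generically transitively without infinite stabilizer — one can make this precise by computing $\dim G$ against $\dim V$ for the subgroup generated by $T$ and the extra $\GG_m\subset\GL(V(\lambda_i))$). The cleanest route is probably the weight-multiplicity bound: for a semisimple (hence, up to center, $S$ with $S$ semisimple) prehomogeneous $(G,\rho,V)$ with $\dim G=\dim V$, a weight of multiplicity $\ge2$ produces, via the argument just sketched, either a centralizer strictly larger than the center (contradicting $\Gamma=Z$ from the proof of Theorem~\ref{40}) or an infinite generic isotropy group.

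Concretely, I would run this as follows. Let $\mu$ be a weight of $T$ on $V$ of multiplicity $r=\dim V(\mu)\ge2$. Inside $\GL(V)$ consider $K:=\GL(V(\mu))\times\prod_{\nu\ne\mu}\{\id\}$ embedded using a $T$-stable decomposition $V=\bigoplus_\nu V(\nu)$. Every element of $K$ commutes with $T$. Since $r\ge2$, $K$ is non-abelian, so it is not contained in the centralizer $\Gamma$ of $G$ in $\GL(V)$, which equals $Z$ and is commutative by the proof of Theorem~\ref{40}; hence $K\not\subseteq G$. On the other hand, $K$ need not preserve $D$, so I cannot immediately contradict maximality of $G$ — this is the step I expect to be the main obstacle: ruling out a repeated weight requires showing that such repetition is incompatible with $V\setminus D$ being a single $G$-orbit with finite isotropy, which is a genuinely representation-theoretic fact, not a formal one. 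The way I would close it: examine each of the four basic prehomogeneous vector spaces of Theorem~\ref{92} and their castling descendants, using Proposition~\ref{77}.\eqref{77a} (generic isotropy is castling-invariant) and the explicit description of castling, $V\rightsquigarrow V(m)^*\otimes V(m-n)$, to track what the maximal torus and its weights look like after castling; in each case $T$ acts with distinct weights because $V$ is, up to the known exceptions, a tensor product $V(m)\otimes V(n)$ (matrices) on which the diagonal torus of $\SL_m\times\GL_n$ acts with weights $\varepsilon_i+\delta_j$, which are pairwise distinct. Thus the final proof is: reduce to semisimple via Theorem~\ref{40}; reduce to the Sato--Kimura list via Theorem~\ref{92}; for each of the (castling classes of the) four divisors, exhibit a maximal torus whose weights on $V$ are pairwise distinct; invoke Lemma~\ref{87} and Proposition~\ref{88}.

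I expect the main obstacle to be the bookkeeping in the castling step: verifying that ``pairwise distinct weights of a maximal torus on $V$'' survives a castling transformation, since castling replaces $V(m)\otimes V(n)$ by $V(m)^*\otimes V(m-n)$ and one must check that the relevant torus still separates the coordinate lines — this is true because the weights $-\varepsilon_i+\delta'_j$ on the dual-tensor side are again pairwise distinct as $i$ and $j$ vary, provided we keep the $\GG_m$ scaling factor (which we do, by the reduction ensuring $\GG_m\cdot\id_V\subseteq G$, or by passing to $\GL$ rather than $\SL$ factors as in Theorem~\ref{92}). Once that is in hand everything else is immediate from the quoted results.
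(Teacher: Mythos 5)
Your final argument --- reduce to the semisimple case via Theorem~\ref{40}, verify the distinct-weight hypothesis of Lemma~\ref{87} directly on the Sato--Kimura models of Theorem~\ref{92}, and propagate it through each castling class by observing that on $V\otimes V(n)$ (or on a dual) the weights of the product torus are sums $\lambda_i+\mu_j$ of characters of the separate factors, hence pairwise distinct exactly when the $\lambda_i$ are --- is precisely the paper's proof. The abandoned middle passage (trying to rule out a repeated weight via the centralizer and finite generic isotropy) is an unnecessary detour, and your description of the base cases as matrix spaces is slightly off (they involve $S^3\KK^2$, etc.), but the castling-invariance argument you settle on is exactly what the paper does and it closes the proof.
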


\begin{proof} 
It is straightforward to check, using the details of the representations given in \cite{SK77}, that the hypothesis of Lemma~\ref{87} holds for each of the three irreducible linear free divisors listed in Theorem~\ref{92}. 
The proposition will be proved by showing that if two linear free divisors are in the same castling class, then the hypothesis of Lemma \ref{87} holds for both, or for neither. 
To see this, notice that this hypothesis certainly holds for the standard $n$-dimensional representation $(\SL_n,\Lambda_1,V(n))$ of $\SL_n$.
Let $(G,\rho,V)$ be any representation. 
Then it is easy to see that the following three statements are equivalent:
\begin{enumerate}
\item\label{93a} The hypothesis of Lemma~\ref{87} holds for $(G,\rho,V)$.
\item\label{93b} The hypothesis of Lemma~\ref{87} holds for $(G\times\SL_n,\rho\otimes\Lambda_1,V\otimes V(n))$.
\item\label{93c} The hypothesis of Lemma~\ref{87} holds for $(G,\rho^*,V^*)$.
\end{enumerate}
For example, to see the equivalence of \eqref{93a} and \eqref{93b}, let $T_G$ and $T_{n-1}$ be maximal tori in $G$ and $\SL_n$ respectively, and pick bases $u_1,\dots, u_m$ and $v_1,\dots,v_m$ of $V$ and $V(n)$ respectively, that diagonalize these tori.
Then $u_i\otimes v_j$, $1\leq i\leq m$, $1\leq j\leq n$, is a basis for $V\otimes V(n)$ and $T_G\times T_{n-1}$ is a maximal torus in $G\times\SL_n$. 
Denote by $\lambda_i$ and $\mu_j$ the diagonal characters on $G$ associated with the basis elements $u_i\in V$ and $v_j\in V(n)$, and by $\lambda_{i,j}$ the diagonal  character associated to the basis element $u_i\otimes v_j$ of $V\otimes V(n)$.
Then for $(s,t)\in T_G\times T_{n-1}$ we have $\lambda_{i,j}(s,t)=\lambda_i(s)\mu_j(t)$. From this, and the fact that the hypothesis holds for $(\SL_n,\Lambda_1,V(n))$, the equivalence of \eqref{93a} and \eqref{93b} is clear.

From the equivalence of the three statements it now follows that the hypothesis of Lemma~\ref{87} holds for $\bigr(G\times\SL_n,\rho\times\Lambda_1,V\otimes V(n)\bigl)$ if and only if it holds for $\bigl(G\times\SL_{m-n},\rho^*\otimes\Lambda_1,V^*\otimes V(m-n)\bigr)$. 
In view of the definition of castling given in \eqref{75} and \eqref{79}, this immediately establishes that it holds for all the members of a castling class, or for none. 
\end{proof}

In Proposition~\ref{89} below we use Theorem~\ref{40} to show that Proposition~\ref{88} applies in the case of linear free divisors arising from quiver representations.

\subsection{Euler homogeneity and weak quasi-homogeneity}\label{65}

A hypersurface $D$ in $V$ is \emph{Euler homogeneous} at $p\in D$ if $f\in\fm_p\langle\frac{\del f}{\del x_i}\rangle$ for some, and hence any, local defining equation $f$ of $D$ at $p$. 
This condition can be rephrased by saying that $\chi(f)=f$ for some vector field $\chi\in\Der(-\log D)_p$ with $\chi(p)=0$.
Such a vector field is called an Euler vector field at $p$.
If the preceding condition holds for all $p\in D$, then $D$ is called 
\emph{strongly} Euler homogeneous.

\begin{prp}\label{18}
A linear free divisor $D$ is Euler homogeneous at $p\in D$ if and only if $G_p^\circ\not\subseteq H$.
\end{prp}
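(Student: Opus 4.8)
The plan is to reduce the statement to the behaviour of the character $\chi$ on the isotropy subalgebra $\fg_p$. Recall that $D$ is Euler homogeneous at $p$ precisely when there is a logarithmic vector field $\chi\in\Der(-\log D)_p$ vanishing at $p$ with $\chi(f)=f$. Since $D$ is linear free, $\Der(-\log D)_p=\cL_p$ is generated by the global linear logarithmic vector fields $\omega_0(A)$, $A\in\fg$, and by \eqref{6} we have $\omega_0(A)(f)=d\chi(A)f$. Evaluating a linear vector field $\omega_0(A)$ at $p$, it vanishes at $p$ exactly when $A\in\fg_p$, the Lie algebra of the isotropy group $G_p$. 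So the first step is to observe that an Euler vector field at $p$ lying in the span of the $\omega_0(A)$ produces an element $A\in\fg_p$ with $d\chi(A)=1$, i.e.\ $d\chi|_{\fg_p}\neq 0$, which is to say $\fg_p\not\subseteq\fh=\ker d\chi$, equivalently $G_p^\circ\not\subseteq H$.

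The substantive direction is that a \emph{general} element of $\cL_p$ vanishing at $p$ and pairing to $1$ against $f$ already exists as soon as $d\chi|_{\fg_p}\neq 0$; conversely that no Euler vector field can exist if $d\chi|_{\fg_p}=0$. For the forward implication (from $G_p^\circ\not\subseteq H$), pick $A\in\fg_p$ with $d\chi(A)\neq 0$ and rescale so that $d\chi(A)=1$; then $\chi:=\omega_0(A)$ is a linear logarithmic vector field vanishing at $p$ with $\chi(f)=f$, so $D$ is Euler homogeneous at $p$. For the reverse implication, suppose $D$ is Euler homogeneous at $p$, with Euler field $\eta\in\cL_p$, $\eta(p)=0$, $\eta(f)=f$. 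Using the isomorphism $\fl\otimes_\KK\cO\cong\cL$ one writes $\eta=\sum_i h_i\,\omega_0(A_i)$ with $h_i\in\cO_p$ and $\{A_i\}$ a basis of $\fg$. Evaluating at $p$ and using $\eta(p)=0$ shows $\sum_i h_i(p)\,\omega_0(A_i)(p)=0$, i.e.\ $\sum_i h_i(p)A_i\in\fg_p$ (since $\omega_0$ identifies $\fg$ with $\fl$ and the kernel of evaluation-at-$p$ on $\fl$ is exactly $\omega_0(\fg_p)$). On the other hand $\eta(f)=\sum_i h_i\,d\chi(A_i)f=f$, so evaluating the coefficient of $f$ and using that $f(p)=0$ — wait, here one must be slightly careful, because $f$ vanishes at $p$ — so instead divide: since $\eta(f)=f$ with $\eta$ vanishing at $p$, and $f$ is reduced, the relation $\sum_i h_i\,d\chi(A_i)=1$ holds as an identity in $\cO_p$ (comparing $\eta(f)/f$, valid on the dense open set where $f\neq 0$ and extending by continuity/flatness). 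Evaluating at $p$ gives $\sum_i h_i(p)\,d\chi(A_i)=1$, so $d\chi$ is nonzero on $\sum_i h_i(p)A_i\in\fg_p$, whence $\fg_p\not\subseteq\fh$, i.e.\ $G_p^\circ\not\subseteq H$.

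The main obstacle I anticipate is the bookkeeping in the reverse direction: justifying that the coefficient identity $\sum_i h_i\,d\chi(A_i)=1$ genuinely holds in the local ring $\cO_p$ (not merely generically) and then that evaluation at $p$ is legitimate — this rests on $f$ being a non-zero-divisor and on $\cL\cong\fl\otimes_\KK\cO$ being a free module so that evaluation-at-$p$ behaves well. A cleaner route, which I would actually adopt, is to work modulo $\fm_p$: the exact sequence $0\to\fg_p\to\fg\to\fg/\fg_p\to 0$ maps via $\omega$ to the middle row of diagram~\eqref{4} (the stalk version at $p\in D\setminus\Sing D$, or the evident analogue when $p$ is allowed to be singular), identifying $\cL_p/\fm_p\cL_p$ with $\fg$ and the subspace of classes of vector fields vanishing at $p$ with $\omega_0(\fg_p)$; under this identification the functional ``coefficient of $f$ in $\eta(f)$'' is exactly $d\chi$, so Euler homogeneity at $p$ — existence of a class in $\omega_0(\fg_p)$ on which $d\chi$ is $1$ — is literally the assertion $d\chi|_{\fg_p}\neq 0$, i.e.\ $\fg_p\not\subseteq\ker d\chi=\fh$. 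This packages the whole argument into one linear-algebra observation once the identification is set up.
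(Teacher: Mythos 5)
Your proposal is correct and follows essentially the same route as the paper: both arguments reduce Euler homogeneity at $p$ to the condition $d\chi|_{\fg_p}\neq 0$, i.e.\ $\fg_p\not\subseteq\fh$, using that $\cL$ is freely generated by the linear fields $\omega_0(A)$ and that $\omega_0(A)(f)=d\chi(A)f$. The only difference is that the paper outsources the key reduction to \cite[Lem.~7.5]{GMNS09} (Euler homogeneity at $p$ is equivalent to $\omega_0(\id)(p)\in\omega_0(\fh)(p)$) and finishes with the splitting $\fg=\KK\id\times\fh$, whereas you prove the reduction inline by writing the Euler field in the free basis, cancelling the non-zero-divisor $f$, and evaluating at $p$ --- a legitimate and self-contained version of the same computation.
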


\begin{proof}
For a linear free divisor $D$, Euler homogeneity at $p$ is equivalent to $\omega_0(\id)(p)\in\omega_0(\fh)(p)$ \cite[Lem.~7.5]{GMNS09}.
This means that $\omega_0(\id)(p)=\omega_0(A)(p)$ for some $A\in\fh$ and hence $\omega_0(\id-A)(p)=0$ or $\id-A\in\fg_p$.
As $\fg=\KK\id\times\fh$, this is equivalent to $\fg_p\not\subseteq\fh$.
The claim follows.
\end{proof}

\begin{rmk}\label{35}
The ``if'' part of Proposition~\ref{18} holds more generally for a prehomogeneous vector space $(G,\rho,V)$, a relative invariant $f\in\KK[V]$ with associated character $\chi$, $H=\ker\chi$, and $D=\Z(f)$.
\end{rmk}

An algebraic hypersurface $D\subset V$ is \emph{weakly quasihomogeneous} at $p\in D$, if $f=(x_1,\dots,x_n)A(x)(\frac{\del f}{\del x_i})^t\in\fm\langle\frac{\del f}{\del x_i}\rangle$ with $A(0)$ having only non negative eigenvalues for some, and hence any, local defining equation $f$ of $D$ at $p$.
We notice this condition implies Euler homogeneity and that, by \cite[Lem.~2.4]{GS06}, $A(0)$ has at least one positive eigenvalue.

By \cite{Sch07}, weak quasi-homogeneity means that, after a suitable analytic coordinate change, $D$ is defined at $p$ by a quasihomogeneous polynomial with respect to non-negative weights.
If the preceding condition holds for all $p\in D$, then $D$ is called locally weakly quasihomogeneous.
If the attribute ``weakly'' is dropped, this means that all weights have to be strictly positive.

We call a representation $\rho\colon G\to\GL(V)$ \emph{positive} 
if the convex hull of its weights does not contain $0$, and \emph{non-negative} if the interior of the convex hull of its weights does not contain $0$.

\begin{prp}\label{47}
Let $D$ be a linear free divisor in $V$.
Then $D$ is quasihomogeneous (weakly quasihomogeneous) at $p$ if the normal representation $\rho_p\colon G_p\to\GL(T_pV/T_p(Gp))$ is positive (non-negative).
\end{prp}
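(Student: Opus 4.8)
The plan is to reduce the statement about $D$ to a statement about a local defining equation of $D$ at $p$ and then to read off the weighted homogeneity from the normal action of $G_p$. First I would set up local coordinates adapted to the orbit through $p$: write $m = \dim(Gp)$ and choose, as in the proof of Theorem~\ref{3}, an isomorphism of $\cO_{V,p}$-modules $\cL \cong \fg \otimes \cO$ near $p$, together with coordinates $y_1,\dots,y_n$ so that $Gp$ is $\{y_{m+1}=\dots=y_n=0\}$ and the normal space $T_pV/T_p(Gp)$ is spanned by $\del_{y_{m+1}},\dots,\del_{y_n}$. Since $D$ is $G$-invariant, its local equation $f$ may be taken to depend only on the transverse coordinates $y_{m+1},\dots,y_n$ after a coordinate change (the restriction of $f$ to a transverse slice is a defining equation of $D \cap (\text{slice})$, and one reconstructs $f$ via the orbit map); I should be careful here to justify that $f$ can genuinely be written as a function of the transverse coordinates alone, using $G_p$-equivariance of the slice.

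Next I would exhibit a diagonalizable vector field in $\Der(-\log D)_p$ vanishing at $p$ whose linear part along the slice has the eigenvalues of the normal representation $\rho_p$. The point is that $G_p^\circ$ (or at least its reductive part, a torus quotient) acts on the slice linearly, via $\rho_p$ up to the trivial directions, and since $D$ is preserved by $G$ it is preserved by $G_p$; hence the generator of a one-parameter subgroup of $G_p$ acting on the slice with the weights of $\rho_p$ gives a logarithmic vector field $\chi$ with $\chi(p)=0$ whose linearization is the diagonal matrix of those weights. Diagonalizing $\chi$, we get coordinates in which $f$ is quasihomogeneous with respect to the weights of $\rho_p$. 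Concretely, $\chi(f) = \langle \text{(diag of weights)}\cdot y', \nabla f\rangle = f$ after normalization (the relative-invariant property \eqref{5} applied to $G_p$ forces $\chi(f)$ to be a constant multiple of $f$, and that constant is nonzero because $f$ is not $G_p$-invariant away from the slice — or one simply rescales), so in these coordinates $f = (y')A(0)(\del f/\del y')^t$ with $A(0)$ the diagonal weight matrix of $\rho_p$. Positivity (resp.\ non-negativity) of $\rho_p$, i.e.\ $0$ not in the convex hull (resp.\ interior of the convex hull) of the weights, does not by itself give that all weights are positive (resp.\ non-negative), so the real content is that after possibly replacing $\chi$ by a different element of the torus $\Hom(G_p^\circ,\GG_m)\otimes\RR$ one can arrange all weights $\geq 0$ (resp.\ $>0$); this is exactly the statement that $0$ lying outside the (interior of the) convex hull of a finite weight set is equivalent to the existence of a linear functional that is non-negative (resp.\ positive) on all weights, which is the separating-hyperplane theorem. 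Rescaling $\chi$ by that functional replaces the weight matrix $A(0)$ by one with the desired sign condition while keeping $\chi(f)$ a positive multiple of $f$, and by \cite{Sch07} this is precisely weak quasihomogeneity (resp.\ quasihomogeneity) of $D$ at $p$.

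I expect the main obstacle to be the bookkeeping in the first step: making precise, in the possibly non-reductive group $G_p$, the passage to a one-parameter subgroup realizing the weights of $\rho_p$, and checking that the resulting vector field is genuinely logarithmic for $D$ and has the stated diagonal linearization on a transverse slice, rather than only on $T_pV/T_p(Gp)$ as an abstract quotient. The unipotent part of $G_p$ contributes nilpotent summands to the normal action in general, but by definition the \emph{weights} of $\rho_p$ are the characters of a maximal torus of $G_p$, so one only needs a maximal torus $T_p \subseteq G_p^\circ$; its action on a transverse slice is linearizable, and one chooses $\chi$ in $\mathrm{Lie}(T_p)$. The remaining subtlety — that $\chi$ can be normalized so $\chi(f)=f$ — follows from Proposition~\ref{18} (equivalently Remark~\ref{35}), which guarantees Euler homogeneity at $p$ once $G_p^\circ \not\subseteq H$, and this inclusion fails precisely when some weight of $\chi$ on $f$ is nonzero, i.e.\ when the chosen separating functional pairs nontrivially with the character $d\chi$; choosing the functional generically (still separating) ensures this.
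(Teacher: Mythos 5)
Your argument is essentially the paper's own proof: the paper likewise takes a one-parameter subgroup $\lambda\colon\GG_m\to G_p$ that is positive (resp.\ non-negative) on the weights of $\rho_p$ --- exactly your separating-hyperplane step --- uses reductivity of $\GG_m$ to produce a $\lambda$-invariant affine linear transversal $S_p$, on which $\lambda$ then acts with positive (resp.\ non-negative) weights and hence gives the desired Euler vector field, and concludes via the local analytic product $(D,p)\cong(\AA^{\dim(Gp)}_\KK,0)\times(D\cap S_p,p)$, which is your reduction of $f$ to the transverse coordinates. The only blemish is a swapped parenthetical (``all weights $\geq 0$ (resp.\ $>0$)'' should read ``$>0$ (resp.\ $\geq 0$)'' to match your ordering positive/non-negative); it does not affect the argument.
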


\begin{proof}
Assume that $\rho_p$ is positive and set $N_p=T_pV/T_p(Gp)$.
Then there is a $1$-PSG $\lambda\colon\GG_m\to G_p\to\GL(N_p)$ that is positive on the weights of $\rho_p$.
As $\GG_m\subset G_p$ is reductive and preserves both $T_pV$ and $T_p(Gp)$, there is an affine linear transversal $S_p\subset V$ to the orbit $Gp$ at $p$ such that $\lambda$ restricts to $\lambda'\colon\GG_m\to\GL(S_p)$.
Then $S_p\to N_p$ is an isomorphism and hence $\lambda'$ is positive on its weights.
This means that $\lambda'$ defines an Euler vector field at $p$ with strictly positive weights for $D'=D\cap S$.
But locally analytically $(D,p)=\left(\AA_\KK^{\dim(Gp)},0\right)\times(D',p)$ and hence with $D'$ also $D$ is quasihomogeneous at $p$.
This statement about weak quasihomogeneity is proved analogously.
\end{proof}

\section{Quiver representations}

\subsection{The category of quiver representations}

A \emph{quiver} $(Q_0,Q_1,s,t)$ consists of a finite set $Q_0$ of vertices, a finite set $Q_1$ of arrows, and source and target maps $s,t\colon Q_1\to Q_0$.
Without loss of generality, we shall always assume that $Q$ is a connected graph.
A $Q$-representation $M=(V_i,f_\alpha)$ consists of a family of finite $\KK$-vector spaces $V_i$, $i\in Q_0$, and a family of homomorphisms $f_\alpha\in\Hom(V_{s\alpha},V_{t\alpha})$, $\alpha\in Q_1$.
We write $\ddim(M)=(\dim V_i)_{i\in Q_0}\in\NN^{Q_0}$ for its dimension vector.
Homomorphisms $\phi=(\phi_i)_{i\in Q_0}\colon M\to N$, $\phi_i\in\Hom(V_i,W_i)$, of two $Q$-representations $M=(V_i,f_\alpha)$ and $N=(W_i,g_\alpha)$ are defined by the obvious commutativity condition $g_\alpha\circ\phi_{s\alpha}=\phi_{t\alpha}\circ f_\alpha$.

To a $Q$-representation $M=(V_i,f_\alpha)$ on can associate the vector space $V=\bigoplus_{i\in Q_0}V_i$, maps $f_\alpha\colon V\to V$, $\alpha\in Q_1$, induced by $f_\alpha$ and the projection maps $f_i\colon V\to V_i\into V$, $i\in Q_0$.
These maps satisfy the relations $f_i^2=f_i$ and $f_\alpha f_{s\alpha}=f_\alpha=f_{t\alpha}f_\alpha$ and all other products are zero.
The path algebra $\KK Q$ of $Q$ is the associative algebra on the symbols $e_i$, $i\in Q_0$, and $e_\alpha$, $\alpha\in Q_1$, subject to the preceding relations.
It is finite $\KK$-dimensional if and only if $Q$ has no oriented loops.
Note that
\begin{equation}\label{9}
1=\sum_{i\in Q_0}e_i
\end{equation}
is a decomposition into orthogonal idempotents.
Clearly $V$ becomes a $\KK Q$-module by $e_i\mapsto f_i$ and $e_\alpha\mapsto f_\alpha$.

This construction gives rise to the following equivalence \cite[Prop.~1.2.2]{Bri08}.

\begin{prp}
The category of $Q$-representations is equivalent to that of left $\KK Q$-modules.
\end{prp}

This is the starting point for the study of quiver representations based on general module theory over an associative finite $\KK$-dimensional algebra. 
For background in this area we suggest \cite{Bri08} and \cite{CB92}, and the more encyclopedic \cite{ARS95}. 
Since it more directly concerns linear free divisors, we also cite \cite{BM06}, though the proof of the main theorem therein (\cite[Cor.~5.5]{BM06}) is considerably simplified below (see Theorem \ref{111}). 

It follows from the Fitting Decomposition that $M$ is an indecomposable $\KK Q$-module if and only if $\End_{\KK Q}M=\KK\id_M\oplus I$ where $I$ is a nilpotent ideal \cite[Lem.~1.3.3]{Bri08}.
From this one deduces the following result \cite[Thm.~1.3.4]{Bri08}.

\begin{thm}\label{13}
Let $M$ be a finite $\KK$-dimensional $\KK Q$-module and $M=\bigoplus_{i=1}^sr_iM_i$ its decomposition into indecomposables $M_i$ (with $M_i\not\cong M_j$ if $i\neq j$).
Then $\End_{\KK Q} M=B\oplus I$ where $B\cong\prod_{i=1}^s\Mat_{r_i\times r_i}(\KK)$ and $I$ is a nilpotent ideal.
\end{thm}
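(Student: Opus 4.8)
The plan is to deduce this from the indecomposable case stated just above, namely that $M$ indecomposable $\Leftrightarrow$ $\End_{\KK Q}M = \KK\id_M\oplus I$ with $I$ a nilpotent ideal, together with the Krull--Schmidt property for finite $\KK$-dimensional modules (which is exactly what lets us write $M=\bigoplus_{i=1}^s r_iM_i$ uniquely). First I would set up notation: write $M=\bigoplus_{i=1}^s M_i^{\oplus r_i}$ and compute $\End_{\KK Q}M$ as a "matrix algebra" with blocks $\Hom_{\KK Q}(M_i^{\oplus r_i}, M_j^{\oplus r_j}) \cong \Mat_{r_j\times r_i}\!\big(\Hom_{\KK Q}(M_i,M_j)\big)$. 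The diagonal blocks ($i=j$) are $\Mat_{r_i\times r_i}(\End_{\KK Q}M_i)$, and by the indecomposable case $\End_{\KK Q}M_i = \KK\id_{M_i}\oplus I_i$ with $I_i$ nilpotent; the off-diagonal blocks ($i\ne j$) consist entirely of homomorphisms $M_i\to M_j$, and since $M_i\not\cong M_j$ and both are indecomposable (so their endomorphism rings are local), every such homomorphism is non-invertible.

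The key step is then to identify the ideal $I$. I would let $B$ be the subalgebra spanned by the "scalar diagonal" parts, i.e.\ $B=\prod_{i=1}^s \Mat_{r_i\times r_i}(\KK\id_{M_i})\cong\prod_{i=1}^s\Mat_{r_i\times r_i}(\KK)$, and let $I$ be the two-sided ideal generated by (a) all off-diagonal blocks $\Hom_{\KK Q}(M_i^{\oplus r_i},M_j^{\oplus r_j})$ for $i\ne j$ and (b) the matrices over $I_i$ inside each diagonal block. As a $\KK$-vector space $\End_{\KK Q}M = B\oplus I$; one checks $I$ is a two-sided ideal by the block-multiplication rules (a product landing in a diagonal block either passes through an off-diagonal block, hence stays in $I$, or is a product with at least one $I_i$-factor). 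Nilpotence of $I$: here I would invoke that $\End_{\KK Q}M$ is a finite-dimensional algebra and $I$ consists of nilpotent elements (equivalently, $I$ lies in the Jacobson radical because $\End_{\KK Q}M / I \cong B$ is semisimple) — the cleanest route is to show $I^N=0$ for large $N$ directly, using that a sufficiently long product of block-maps must repeat an index or accumulate enough nilpotent $I_i$-factors to vanish; this is the standard Harada--Sai / Fitting-type estimate for finite-dimensional module categories.

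I expect the main obstacle to be the nilpotence of $I$, specifically making rigorous the claim that long composites of non-isomorphisms among the indecomposables eventually vanish. The pigeonhole argument on indices only forces a composite $M_i\to M_i$ that is a non-isomorphism, hence in the local ring $\End_{\KK Q}M_i$ a nilpotent element, but one must control the \emph{length} uniformly — this is where one uses finite $\KK$-dimension of $M$ (bounding both the number $s$ of indecomposable types and the nilpotency index of each $I_i$), or alternatively quotes that $I\subseteq\mathrm{rad}(\End_{\KK Q}M)$ and the radical of a finite-dimensional algebra is nilpotent. Everything else — the block decomposition, the identification $B\cong\prod\Mat_{r_i\times r_i}(\KK)$, and the ideal property of $I$ — is routine bookkeeping with $\Hom$-spaces between indecomposables.
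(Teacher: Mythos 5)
Your proposal is correct and is essentially the standard argument that the paper delegates to \cite[Thm.~1.3.4]{Bri08}: deduce the result from the Fitting decomposition of $\End_{\KK Q}(M_i)$ for each indecomposable via the block-matrix description of $\End_{\KK Q}(\bigoplus_i M_i^{\oplus r_i})$, with nilpotence of $I$ coming from a Harada--Sai/radical argument in a finite-dimensional algebra. The only point worth writing out in full is that a composite $M_i\to M_k\to M_i$ through a non-isomorphic indecomposable $M_k$ lands in the radical $I_i$ of the local ring $\End_{\KK Q}(M_i)$ (otherwise $M_i$ would split off $M_k$), which is exactly what makes $I$ closed under multiplication.
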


The automorphism group $\Aut_Q(M)$ of a $Q$-representation $M$ is affine open in its Lie algebra $\End_Q(M)$ and hence connected.
This yields the following description of $\Aut_Q(M)$ \cite[Prop.~2.2.1]{Bri08}.

\begin{prp}\label{10}
Let $M$ be a $Q$-representation with decomposition into indecomposables $M=\bigoplus_{i=1}^sr_iM_i$.
Then $\Aut_Q(M)\cong U\rtimes\prod_{i=1}^s\GL_{r_i}(\KK)$ where $U$ is a closed normal unipotent subgroup. 
\end{prp}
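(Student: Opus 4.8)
The plan is to realize $\Aut_Q(M)$ as an affine open subset of the algebra $\End_Q(M)$ and then read off its structure from Theorem~\ref{13}. First I would observe that, for a finite $\KK$-dimensional associative algebra $A$, the unit group $A^\times$ is the complement in $A$ of the hypersurface $\{\det\colon A\to\KK\}=0$ (where $\det$ denotes the determinant of left multiplication on $A$, or the reduced norm), hence a principal affine open subset of the vector space $A$; in particular $A^\times$ is a connected affine algebraic group whose Lie algebra is $A$. Applying this to $A=\End_Q(M)$ gives that $\Aut_Q(M)=\End_Q(M)^\times$ is connected with Lie algebra $\End_Q(M)$, which is the first assertion hidden in the statement and the reason the decomposition below works at the group level.

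Next I would invoke Theorem~\ref{13}: write $\End_Q(M)=B\oplus I$ with $B\cong\prod_{i=1}^s\Mat_{r_i\times r_i}(\KK)$ a semisimple subalgebra (a Levi complement) and $I$ a two-sided nilpotent ideal (the Jacobson radical). The idea is that a unit of a split algebra $A=B\oplus\operatorname{rad}(A)$ decomposes compatibly: the natural surjection $\pi\colon A\to A/I\cong B$ restricts to a surjective group homomorphism $A^\times\to B^\times$, its kernel is $U:=(1+I)^\times=1+I$ (every element $1+x$ with $x\in I$ is a unit, with inverse $\sum_{k\ge 0}(-x)^k$, a finite sum by nilpotency), and the inclusion $B\into A$ gives a section $B^\times\into A^\times$ splitting $\pi$. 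Hence $A^\times\cong U\rtimes B^\times$. Here $B^\times\cong\prod_{i=1}^s\GL_{r_i}(\KK)$, giving the stated reductive part, and $U=1+I$ is the normal subgroup.

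It remains to check that $U$ is a closed normal \emph{unipotent} subgroup. Normality is immediate since $I$ is a two-sided ideal, so $g(1+x)g^{-1}=1+gxg^{-1}\in 1+I$; closedness holds because $U=\{a\in A^\times : \pi(a)=1\}$ is the preimage of a point under a morphism, equivalently the trace of the affine subspace $1+I$ on $A^\times$. Unipotence follows from nilpotency of $I$: pick $m$ with $I^m=0$; then the map $1+x\mapsto$ (upper-triangular unipotent matrix of left multiplication on $A$ with respect to a filtration adapted to the powers of $I$) exhibits $U$ as a closed subgroup of a group of unipotent matrices, or more simply, for every $a\in U$ the element $a-1$ is nilpotent, so $a$ is a unipotent element of $\GL(A)$ acting by left translation, and a closed subgroup all of whose elements are unipotent is unipotent. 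The only point requiring a little care — and the place I would be most careful — is the existence of the section $B^\times\into A^\times$: this is exactly the content of having a Levi decomposition $A=B\oplus I$ as an \emph{algebra} (not merely as a vector space), which is precisely what Theorem~\ref{13} supplies, so no Wedderburn--Malcev argument needs to be reproduced here.
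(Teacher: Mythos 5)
Your proof is correct and takes essentially the same route as the paper, which defers to Brion's notes for exactly this argument: identify $\Aut_Q(M)$ with the unit group of $\End_Q(M)$ (affine open in the algebra, hence connected) and read off $U\rtimes\prod_i\GL_{r_i}(\KK)$ from the splitting $\End_Q(M)=B\oplus I$ of Theorem~\ref{13}, with $U=1+I$ unipotent by nilpotency of $I$.
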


The category of $\KK Q$-modules is \emph{hereditary}, that is, every submodule of a projective $\KK Q$-module is itself projective. 
For this reason the projective dimension of every $\KK Q$-module is at most $1$.
In fact, by \eqref{9}, the left $\KK Q$-modules $P(i)=\KK Qe_i$ are projective indecomposables and each left $\KK Q$-module $M$ has a standard projective resolution
\[
\xymat{
0\ar[r] & \bigoplus_{\alpha\in Q_1}P(t\alpha)\otimes e_{s\alpha}M\ar[r]^u & \bigoplus_{i\in Q_0}P(i)\otimes e_iM\ar[r]^-v & M\ar[r] & 0
}
\]
where $u(a\otimes m)=a\alpha\otimes m-a\otimes\alpha m$ on the $\alpha$-summand and $v$ is the canonical multiplication map \cite[Prop.~1.4.1]{Bri08}. 
Since all higher $\Ext$ groups vanish, instead of $\Ext^1_Q(M,N)$ we write $\Ext_Q(M, N)$. 

Using the isomorphism $\Hom_{\KK Q}(P(e),M)\cong eM$ defined by evaluation at the idempotent $e$, one obtains the Ringel exact sequence \cite[Cor.~1.4.2]{Bri08} 
\begin{equation}\label{11}
\xymat@C=15pt@R=10pt{
0\ar[r] & \Hom_Q(M,N)\ar[r] & \bigoplus_{i\in Q_0}\Hom(V_i,W_i)\ar`r[d]`[d]`[ldd]`[dd][dd]^-{c_{M,N}} & & \\
&&&&\\
& & \bigoplus_{\alpha\in Q_1}\Hom(V_{s\alpha},W_{t\alpha})\ar[r] & \Ext_Q(M,N)\ar[r] & 0 
}
\end{equation}
associated to two $Q$-representations $M=(V_i,f_\alpha)$ and $N=(W_i,g_\alpha)$, where
\[
c_{M,N}((\phi_i)_{i\in Q_0})=(\phi_{t\alpha}f_\alpha-g_\alpha\phi_{s\alpha})_{\alpha\in Q_1}.
\]
For $M=N$ it induces an exact sequence 
\begin{equation}\label{14}
\xymat@C=15pt@R=10pt{
0\ar[r] & \End_Q(M)\ar[r] & \bigoplus_{i\in Q_0}\End(V_i)\ar`r[d]`[d]`[ldd]`[dd][dd]^-{c_{M}} & & \\
&&&&\\
& & \bigoplus_{\alpha\in Q_1}\Hom(V_{s\alpha},V_{t\alpha})\ar[r] & \Ext_Q(M)\ar[r] & 0
}
\end{equation}
where $c_M=c_{M,M}$ and $\Ext_Q(M)=\Ext_Q(M,M)$ denotes the self-extensions of $M$. 
In \cite{BM06} this sequence is interpreted in terms of deformation theory, and $\Ext_Q(M)$ is identified as the $T^1$ (set of isomorphism classes of infinitesimal deformations) of the $\KK Q$-module $M$. 
See also the discussion in Section~\ref{110}. 

The \emph{Euler form} on $\QQ^{Q_0}$ is defined by
\[
\langle m,n\rangle_Q=\sum_{i\in Q_0}m_in_i-\sum_{\alpha\in Q_1}m_{s\alpha}n_{t\alpha}=\langle m,En\rangle
\]
where $E$ is the Euler matrix defined by
\[
E=I-A,\quad A=(\#\{\alpha\in Q_1\mid s\alpha=i,\ t\alpha=j\})_{i,j}.
\]
The \emph{Tits form} is the associated quadratic form 
\begin{equation}\label{69}
q_Q(n)=\langle n,n\rangle_Q
\end{equation}
and the \emph{Cartan matrix} $C=E+E^t$ defines the associated symmetric bilinear form
\[
(m,n)_Q=\langle m,n\rangle_Q+\langle n,m\rangle_Q=\langle m,Cn\rangle.
\]
For dimension vectors $m=\ddim(M)$ and $n=\ddim(N)$ it follows from \eqref{11} and \eqref{14} that 
\begin{align}
\langle m,n\rangle_Q&=\dim\Hom_Q(M,N)-\dim\Ext_Q(M,N),\nonumber\\
q_Q(m)&=\dim\End_Q(M)-\dim\Ext_Q(M)\label{76}.
\end{align}
By abuse of notation, we shall write $\langle M,N\rangle_Q$ for $\langle\ddim(M),\ddim(N)\rangle_Q$ and similarly for $q_Q$.
\begin{arxiv}
For $M=(V_i,f_\alpha)$, we shall sometimes abbreviate $M(i)=V_i$, $M(\alpha)=f_{\alpha}$, $sM=\bigoplus_{\alpha\in Q_1}V_{s\alpha}$, and $tM=\bigoplus_{\alpha\in Q_1}V_{t\alpha}$.
\end{arxiv}

\subsection{Quiver representation spaces}\label{110}

Choosing bases of the $V_i$, each $Q$-representation $V$ with fixed dimension vector $d=(\dim V_i)_{i\in Q_0}$ can be considered as a point $(x_\alpha)_{\alpha\in Q_1}$ in the representation space
\begin{equation}\label{45}
\Rep(Q,d)=\bigoplus_{i\to j\in Q_1}\Hom(\KK^{d_i},\KK^{d_j})=\bigoplus_{i\to j\in Q_1}\Mat_{d_j\times d_i}(\KK)
\end{equation}
on which the group $\GL(Q,d)=\prod_{i\in Q_0}\GL_{d_i}(\KK)$ acts by
\[
(g_i)_{i\in Q_0}(x_\alpha)_{\alpha\in Q_1}=(g_{t\alpha}x_\alpha g_{s\alpha}^{-1})_{\alpha\in Q_1}.
\]
In this way the $\GL(Q,d)$-orbits in $\Rep(Q,d)$ correspond to the isomorphism classes of $Q$-representations with dimension vector $d$.
Note that the central subgroup $\KK^*\id\subset\GL(Q,d)$ acts trivially on $\Rep(Q,d)$, so $\GL(Q,d)$ acts via $\PGL(Q,d)=\GL(Q,d)/\KK^*\id$, and that, by the connectedness of $Q$, $\PGL(Q,d)\into\GL(\Rep(Q,d))$.
The isotropy group of $M\in\Rep(Q,d)$ for the actions of $\GL(Q,d)$ and $\PGL(Q,d)$ can be identified with $\Aut_Q(M)$ and $\Aut_Q(M)/(\KK^*\id_M)$.

In the the sequence \eqref{14}, $c_M$ can be identified with the differential at $\id$ of the orbit map $\GL(Q,d)\ni g\mapsto g\cdot M \in\Rep(Q,d)$.
This identifies $\End_Q(M)$ with the isotropy Lie algebra of $M$ in $\gl(Q,d)$ and $\Ext_Q(M)$ with the normal space in $\Rep(Q,d)$ to the orbit $\GL(Q,d)M$ at $M$, that is,
\begin{equation}\label{74}
\Ext_Q(M)=\Rep(Q,d)/T_M(\GL(Q,d)M)
\end{equation}
Note that $c$ corresponds to $\omega_0$ in \eqref{26}.

The following consequences of Theorem~\ref{3} and Proposition~\ref{10} are inspired by \cite{Bri07b}.
Let $d$ be a sincere dimension vector on a quiver $Q$, that is, $d_i\ge1$ for all $i\in Q_0$.
We shall say that $(Q,d)$ \emph{defines a linear free divisor} $D$, and call $D$ a \emph{quiver linear free divisor}, if the discriminant of the action of $\GL(Q,d)$ on $\Rep(Q,d)$ is a linear free divisor $D$ with associated group
\[
G:=\PGL(Q,d)
\]
as in \S\ref{16}.
Any such $D$ is a reductive linear free divisor.

A $Q$-representation $M$ is called a \emph{brick} if $\End_Q(M)=\KK\id_M$ and $d$ is called a \emph{Schur root} if $\Rep(Q,d)$ contains a brick.

\begin{thm}\label{12}
$(Q,d)$ defines a linear free divisor if and only if the following conditions are fulfilled:
\begin{enumerate}
\item\label{12a} $q_Q(d)=1$
\item\label{12b} $d$ is a Schur root.
\item\label{12c} $\dim_\KK\End_Q(N)=2$ for any minimal degeneration $N\in\Rep(Q,d)$.
\end{enumerate}
Condition \eqref{12c} can be replaced by one of the following:
\begin{enumerate}
\item[(\ref{12c}')] $\dim_\KK\Ext_Q(N)=1$ for any minimal degeneration $N\in\Rep(Q,d)$. 
\item[(\ref{12c}'')] All minimal degenerations lie in orbits of codimension $1$. 
\end{enumerate}
\end{thm}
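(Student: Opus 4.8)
The plan is to deduce Theorem~\ref{12} from Theorem~\ref{3} together with the linear-algebra dictionary set up in \S\ref{110}. Recall that $(Q,d)$ defines a linear free divisor precisely when the discriminant $D\subset\Rep(Q,d)$ is linear free with group $G=\PGL(Q,d)$, so we must translate the two conditions of Theorem~\ref{3} into representation-theoretic terms. Condition~\ref{3a}, that $V\setminus D$ is a single $G$-orbit with finite isotropy, says that $\Rep(Q,d)$ contains a dense $\GL(Q,d)$-orbit $\GL(Q,d)M$ with $\Aut_Q(M)/\KK^*\id_M$ finite; by the identifications \eqref{74} and $\Ext_Q(M)=\Rep(Q,d)/T_M(\GL(Q,d)M)$, density of the orbit is equivalent to $\Ext_Q(M)=0$, and finiteness of the isotropy is then equivalent to $\dim\End_Q(M)=1$, i.e.\ $M$ is a brick. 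By \eqref{76}, $\dim\End_Q(M)-\dim\Ext_Q(M)=q_Q(d)$, so once such an $M$ exists we get $q_Q(d)=1$; conversely, the existence of a brick in $\Rep(Q,d)$ is exactly the statement that $d$ is a Schur root, and for a Schur root $q_Q(d)=1$ forces the generic brick to have vanishing $\Ext_Q$ (since $\dim\End\ge 1$ always, with equality on a dense set of bricks, and $\dim\Ext_Q$ is upper semicontinuous). Thus conditions~\ref{12a} and \ref{12b} together are equivalent to Theorem~\ref{3}.\ref{3a}.

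Next I would analyze Theorem~\ref{3}.\ref{3b}, which concerns the smooth part of each irreducible component of $D$. The complement $D=\Rep(Q,d)\setminus\GL(Q,d)M$ is a hypersurface (its codimension-one-ness being part of what condition~\ref{12a} buys us, together with \ref{12b}), and its components of codimension $1$ in $\Rep(Q,d)$ correspond to the orbit closures of the \emph{minimal degenerations} $N$ of $M$, i.e.\ the $N$ whose orbit has codimension exactly $1$. Condition~\ref{3b} requires each such $\GL(Q,d)N$ to be the \emph{entire} smooth locus of its component and to have isotropy group an extension of a finite group by $\GG_m$. The latter says $\dim\Aut_Q(N)/\KK^*\id=1$, hence, after reintroducing the scalars, $\dim\End_Q(N)=2$; this is condition~\eqref{12c}. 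By \eqref{76} again, $\dim\End_Q(N)=q_Q(d)+\dim\Ext_Q(N)=1+\dim\Ext_Q(N)$, so $\dim\End_Q(N)=2$ is equivalent to $\dim\Ext_Q(N)=1$, which is condition~(\ref{12c}'); and $\dim\Ext_Q(N)=1$ means the orbit $\GL(Q,d)N$ has codimension $1$ in $\Rep(Q,d)$ by \eqref{74}, which is condition~(\ref{12c}''). This gives the three equivalent reformulations of the last clause. The remaining point is that condition~\eqref{12c}, imposed on \emph{all} minimal degenerations simultaneously, together with \ref{12a} and \ref{12b}, forces $D$ to be exactly the union of the codimension-one orbit closures with nothing of higher codimension appearing as a component: this is automatic for a hypersurface, since every irreducible component of the hypersurface $D$ has codimension $1$, and its generic point then lies in an orbit of codimension $\le 1$, necessarily exactly $1$ (codimension $0$ being the dense orbit, which is not in $D$); so each component is an orbit closure of a minimal degeneration, and \eqref{12c} applied there gives \ref{3b}, while Proposition~\ref{10} guarantees that the relevant isotropy group $\Aut_Q(N)\cong U\rtimes\GL_1(\KK)$ modulo scalars has the required structure of a finite extension of $\GG_m$.

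The step I expect to be the main obstacle is verifying that conditions~\ref{12a}–\ref{12c} genuinely force $D$ to be a \emph{reduced hypersurface} and that its smooth locus is exhausted by the minimal-degeneration orbits — i.e.\ ruling out the possibility that some component of $D$ has an orbit stratification in which the generic orbit, though of codimension $1$, fails one of the extra requirements, or that $D$ is non-reduced along a component. Reducedness needs that for each minimal degeneration $N$, the orbit closure $\overline{\GL(Q,d)N}$ is generically reduced in $D$, equivalently that the relative invariant cutting it out is squarefree; this should follow by combining $\dim\Ext_Q(N)=1$ with the diagram~\eqref{4} in the proof of Theorem~\ref{3} (the normal action of the $\GG_m$ on the transversal line is through a nontrivial character, by the same argument as in the ``if'' part there, which precisely excludes a double structure). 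Once this bookkeeping is in place, the equivalence is a clean translation, and I would present it by proving the two implications of Theorem~\ref{3} side by side with the dictionary \eqref{74}, \eqref{76}, and Proposition~\ref{10}.
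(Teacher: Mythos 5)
Your overall strategy --- translating the two conditions of Theorem~\ref{3} through the dictionary \eqref{74}, \eqref{76} identifying $\End_Q$ with the isotropy Lie algebra and $\Ext_Q$ with the normal space --- is exactly the paper's, and your treatment of the ``only if'' direction and of the equivalence of \eqref{12c}, (\ref{12c}'), (\ref{12c}'') is sound. But there are two genuine gaps in your ``if'' direction. The first is your claim that $\dim_\KK\End_Q(N)=2$ together with Proposition~\ref{10} ``guarantees that the relevant isotropy group $\Aut_Q(N)\cong U\rtimes\GL_1(\KK)$ modulo scalars has the required structure of a finite extension of $\GG_m$.'' It does not: if $N$ were \emph{indecomposable} with $\dim\End_Q(N)=2$, then $\End_Q(N)=\KK\id\oplus I$ with $I$ nilpotent one-dimensional, so $\Aut_Q(N)$ modulo scalars would be isomorphic to $\GG_a$, which is unipotent and fails condition~\ref{3}.\eqref{3b} (and would produce a non-reduced discriminant along that component). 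The missing ingredient is Kac's theorem (Remark~\ref{17}): under \eqref{12a} and \eqref{12b}, $d$ is a real Schur root, the unique orbit of indecomposables is the open one, hence every minimal degeneration $N$ is decomposable; only then does Theorem~\ref{13} force $\End_Q(N)\cong\KK\times\KK$ with trivial nilpotent part and $\Aut_Q(N)\cong\GG_m^2$, giving $\GG_m$ modulo scalars. This is precisely why the paper reduces to decomposable $N$ at the outset of its proof.

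The second gap is the assertion that ``its generic point then lies in an orbit of codimension $\le 1$'' is ``automatic for a hypersurface.'' It is not automatic: a codimension-one component of $D$ could a priori be swept out by an uncountable family of orbits each of codimension $\ge 2$, with no dense orbit at all (this does happen for general group actions). The correct argument runs the other way: one first shows that every point of $D$ lies in the closure of some minimal degeneration orbit (take an orbit of maximal dimension among those whose closure contains the point; maximality in the degeneration order follows by a dimension comparison), and only then does condition~(\ref{12c}'') force that orbit to have codimension one, hence to be dense in the component containing it. Without this step the hypothesis of Theorem~\ref{3}.\eqref{3b} --- that the smooth part of each component is a \emph{single} orbit --- has not been verified. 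Both gaps are repairable with Remark~\ref{17}, Theorem~\ref{13}, and the dimension argument just sketched, but as written the two quoted claims are false and the ``if'' direction does not close.
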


\begin{proof}
Recall that $\Aut_Q(N)$ is connected and that $\Aut_Q(N)/(\KK^*\id)$ can be identified with the isotropy group $G_N$ of $N\in\Rep(Q,d)$.
By the following Remark~\ref{17}, we can assume $N$ in \eqref{12c} to be decomposable.
Then, by Proposition~\ref{10}, \eqref{12c} is equivalent to $\Aut_Q(N)\cong\GG_m^2$.

Assume that $(Q,d)$ defines a linear free divisor. 
Then, for $M\in V\setminus D$, the second condition in Theorem~\ref{3}.\eqref{3a} becomes $\Aut_Q(M)=\KK^*\id_M$, and hence $M$ is a brick and \eqref{12b} holds true.
The first condition in Theorem~\ref{3}.\eqref{3a} implies 
\begin{align*}
q_Q(d)&=\dim\GL(Q,d)-\dim\Rep(Q,d)\\
&=\dim\GL(Q,d)-\dim(\GL(Q,d)M)=\dim\Aut_Q(M)
\end{align*}
and hence \eqref{12a}.
Similarly, the conditions in Theorem~\ref{3}.\eqref{3b} imply \eqref{12c} and \eqref{12a}.

By reversing these equalities, \eqref{12a} and \eqref{12b} show that a brick $M$ has an orbit of dimension $\dim(\GL(Q,d)M)=\dim\Rep(Q,d)$, which is then open.
Note that, in an irreducible variety, open and dense are equivalent properties of orbits and that such orbits are unique.
Then \eqref{12a} and \eqref{12c} force the complement $D$ of the open orbit to be a hypersurface and imply the conditions in Theorem~\ref{3}.\eqref{3b}.

The last statement follows from \eqref{14}.
\end{proof}

\begin{rmk}\label{17}
By Kac's theorem \cite[Thm.~4.3]{KR86}, conditions \eqref{12a} and \eqref{12b} in Theorem~\ref{12} can be rephrased by saying that $d$ is a real Schur root and in this case there is a unique orbit of indecomposables.
By \eqref{14} and Theorem~\ref{13}, this orbit is open and consists of bricks.
\end{rmk}

\begin{cor}\label{23}
Let $(Q,d)$ define a linear free divisor $D$.
Then the following statements hold:
\begin{enumerate}
\item\label{23a} Up to isomorphism, there is a unique indecomposable $Q$-representation $M$ with $\dim M=d$.
Moreover, $M$ is a brick in the open orbit $\Rep(Q,d)\setminus D$.
\item\label{23b} Any minimal degeneration $N$ is the direct sum $N_1\oplus N_2$ of two non-isomorphic bricks. 
\end{enumerate}
\end{cor}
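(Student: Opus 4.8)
The plan is to read both parts off Theorem~\ref{12}, using Remark~\ref{17} (Kac's theorem) and the endomorphism structure of Theorem~\ref{13}.

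For part~\eqref{23a}: since $(Q,d)$ defines a linear free divisor, $D$ is the complement of the open $\GL(Q,d)$-orbit in the irreducible variety $\Rep(Q,d)$, and such an open orbit is unique. By Theorem~\ref{12} we have $q_Q(d)=1$ and that $d$ is a Schur root, and by Remark~\ref{17} these two facts already force $d$ to be a real Schur root; hence the indecomposable representations of dimension $d$ form a single $\GL(Q,d)$-orbit, this orbit is open, and it consists of bricks. By uniqueness of the open orbit, this orbit of indecomposables equals $\Rep(Q,d)\setminus D$. Therefore any two indecomposables of dimension $d$ are isomorphic, and the representation $M$ sitting in the open orbit $\Rep(Q,d)\setminus D$ is a brick; this is~\eqref{23a}.

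For part~\eqref{23b}: let $N$ be a minimal degeneration, so $N$ lies in $D$. By~\eqref{23a} together with Remark~\ref{17}, every indecomposable of dimension $d$ lies in the open orbit $\Rep(Q,d)\setminus D$, so $N$ must be decomposable; write $N=\bigoplus_{i=1}^s r_iN_i$ with the $N_i$ pairwise non-isomorphic indecomposables, where decomposability means $s\ge2$ or $r_1\ge2$, so in any case $\sum_{i=1}^s r_i^2\ge2$. By Theorem~\ref{13}, $\End_Q(N)=B\oplus I$ with $B\cong\prod_{i=1}^s\Mat_{r_i\times r_i}(\KK)$ and $I$ nilpotent, hence $\dim_\KK\End_Q(N)\ge\sum_i r_i^2\ge2$; on the other hand Theorem~\ref{12}.\eqref{12c} gives $\dim_\KK\End_Q(N)=2$. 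Thus $\sum r_i^2=2$, which leaves only $s=2$ and $r_1=r_2=1$. Now $\End_Q(N)=\End_Q(N_1)\oplus\End_Q(N_2)\oplus\Hom_Q(N_1,N_2)\oplus\Hom_Q(N_2,N_1)$ has dimension $2$ while $\dim_\KK\End_Q(N_i)\ge1$, so the $N_i$ are bricks and $\Hom_Q(N_1,N_2)=\Hom_Q(N_2,N_1)=0$. Hence $N=N_1\oplus N_2$ with $N_1\not\cong N_2$ both bricks, as claimed.

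The only step requiring genuine input is the decomposability of a minimal degeneration $N$ in~\eqref{23b}: an indecomposable cannot occur, precisely because Kac's theorem (Remark~\ref{17}) confines all indecomposables of a real Schur root dimension to the single open orbit, which is disjoint from $D$. Everything after that is bookkeeping with the block decomposition of $\End_Q(N)$, and part~\eqref{23a} follows from the same considerations.
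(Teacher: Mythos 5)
Your proof is correct and follows essentially the same route as the paper: part~\eqref{23a} is read off from Remark~\ref{17} (Kac's theorem gives a unique orbit of indecomposables, which is open and consists of bricks), and part~\eqref{23b} combines the block description of $\End_Q(N)$ from Theorem~\ref{13} with the dimension count $\dim_\KK\End_Q(N)=2$ from Theorem~\ref{12}.\eqref{12c}, after excluding the indecomposable case exactly as the paper does. The paper states these two deductions in one line each; you have simply written out the bookkeeping they leave implicit.
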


\begin{proof}\
\begin{enumerate}
\item This follows from Remark~\ref{17}.
\item This follows from Theorem~\ref{13} and Theorem~\ref{12}.\eqref{12c}.
\end{enumerate}
\end{proof}

A representation $N$ of a quiver $Q$ is \emph{rigid} if it has no non-trivial deformations; this is equivalent to the infinitesimal condition $\Ext_Q(N)=0$. 
The following result refines the statement in Corollary~\ref{23}.\eqref{23b}.

\begin{prp}\label{73}
Let $(Q,d)$ define a linear free divisor $D$, and let $N\in\Rep(Q,d)$ be a minimal degeneration.
Then $N=N_1\oplus N_2$ for two uniquely determined rigid submodules $N_1$ and $N_2$ whose dimension vectors $d_1$ and $d_2$ are real Schur roots.
After perhaps permuting $N_1$ and $N_2$, they satisfy the following conditions:
\begin{gather}
\label{90}\End_Q(N_i)=\KK\id_{N_i},\ \Hom_Q(N_1,N_2)=0=\Hom_Q(N_2,N_1),\\
\label{78}\Ext_Q(N_i)=0,\ \Ext_Q(N_1,N_2)=0,\ \Ext_Q(N_2,N_1)=\KK.
\end{gather}
\end{prp}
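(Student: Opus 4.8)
The plan is to start from what Corollary~\ref{23}.\eqref{23b} already gives us: a minimal degeneration $N$ is a direct sum $N_1\oplus N_2$ of two non-isomorphic bricks. So $\End_Q(N_i)=\KK\id_{N_i}$ is immediate, and together with $\dim_\KK\End_Q(N)=2$ from Theorem~\ref{12}.\eqref{12c} we get $\Hom_Q(N_1,N_2)=0=\Hom_Q(N_2,N_1)$, since $\End_Q(N)=\End_Q(N_1)\oplus\Hom_Q(N_1,N_2)\oplus\Hom_Q(N_2,N_1)\oplus\End_Q(N_2)$ as vector spaces. This settles \eqref{90} and the uniqueness of the decomposition (uniqueness is Krull--Schmidt, already available via Theorem~\ref{13}). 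It remains to prove the $\Ext$ statements and that the $d_i$ are real Schur roots.

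First I would compute $\dim\Ext_Q(N)$. From \eqref{76}, $q_Q(d)=\dim\End_Q(N)-\dim\Ext_Q(N)$, and since $q_Q(d)=1$ by Theorem~\ref{12}.\eqref{12a} and $\dim\End_Q(N)=2$, we get $\dim\Ext_Q(N)=1$ — this is exactly condition (\ref{12c}') and also matches the codimension-$1$ statement (\ref{12c}''). Now decompose $\Ext_Q(N)=\Ext_Q(N_1)\oplus\Ext_Q(N_1,N_2)\oplus\Ext_Q(N_2,N_1)\oplus\Ext_Q(N_2)$, a sum of four non-negative-dimensional pieces totalling $1$. So exactly one of the four summands is one-dimensional and the other three vanish. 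In particular $\Ext_Q(N_i)=0$ for $i=1,2$, so each $N_i$ is rigid; and after perhaps swapping $N_1$ and $N_2$ we may assume the one-dimensional piece is $\Ext_Q(N_2,N_1)$, giving $\Ext_Q(N_1,N_2)=0$ and $\Ext_Q(N_2,N_1)=\KK$, which is \eqref{78}.

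Finally, to see the $d_i$ are real Schur roots: each $N_i$ is a brick (hence $d_i$ is a Schur root) and rigid, so by \eqref{76} applied to $N_i$ we get $q_Q(d_i)=\dim\End_Q(N_i)-\dim\Ext_Q(N_i)=1-0=1$; a Schur root $d_i$ with $q_Q(d_i)=1$ is a real Schur root, by the same application of Kac's theorem invoked in Remark~\ref{17} (real roots are precisely those of Tits-form value $1$). This completes all the assertions.

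I do not expect a serious obstacle here — the whole argument is a bookkeeping exercise splitting the two identities $\dim\End_Q(N)=2$ and $\dim\Ext_Q(N)=1$ across the four $\Hom$/$\Ext$ summands attached to $N_1\oplus N_2$. The one point needing a little care is that the swap ``after perhaps permuting $N_1$ and $N_2$'' is used \emph{once} to fix which of $\Ext_Q(N_1,N_2)$, $\Ext_Q(N_2,N_1)$ is the nonzero one; one should check this is consistent with the ordering not already being pinned down by \eqref{90} (it is not, since \eqref{90} is symmetric in $N_1,N_2$), so the labelling is free to be chosen. The uniqueness clause is just Krull--Schmidt and needs no extra work.
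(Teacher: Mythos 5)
Your bookkeeping for \eqref{90} and for $\dim_\KK\Ext_Q(N)=1$ is fine and matches the paper, but there is a genuine gap at the step ``exactly one of the four summands is one-dimensional \dots\ In particular $\Ext_Q(N_i)=0$''. The dimension count only tells you that exactly one of $\Ext_Q(N_1,N_1)$, $\Ext_Q(N_1,N_2)$, $\Ext_Q(N_2,N_1)$, $\Ext_Q(N_2,N_2)$ is one-dimensional; nothing you have said rules out that the nonzero summand is a \emph{diagonal} one, in which case one of the $N_i$ would fail to be rigid and \eqref{78} would be false. A separate argument for rigidity is required. The paper proves it geometrically: for $N_i'$ near $N_i$ in $\Rep(Q,d_i)$, the sum $N_i'\oplus N_j$ is decomposable, hence lies in $D$, hence in the orbit of $N$, which is open in $D$; the resulting isomorphism $N_i'\oplus N_j\cong N$ carries $N_i'$ to $N_i$ by the uniqueness of the submodules $N_1,N_2$, so $N_i'\cong N_i$ and $\Ext_Q(N_i)=0$. (Alternatively, one can argue as in the proof of Theorem~\ref{46}: a first-order deformation of $N$ in a diagonal direction $\Ext_Q(N_i,N_i)$ keeps the representation split, whereas the generic representation in a transversal to the codimension-one orbit of $N$ must be the indecomposable brick; so the one-dimensional normal space $\Ext_Q(N)$ cannot be concentrated on the diagonal. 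Either way, an argument is needed where you wrote ``in particular''.)

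A second, smaller point: the uniqueness asserted in the proposition is of $N_1,N_2$ as \emph{submodules} of $N$, which Krull--Schmidt alone does not give (it only gives uniqueness of the isomorphism classes of the summands). It does follow from \eqref{90}: as the paper notes, $\Hom_Q(N_i,N)=\Hom_Q(N_i,N_1)\oplus\Hom_Q(N_i,N_2)$ is one-dimensional, hence spanned by the canonical inclusion, so any copy of $N_i$ inside $N$ coincides with $N_i$. This is not a decorative refinement: it is exactly what the paper's rigidity argument above uses to conclude $N_i'\cong N_i$, so if you repair the first gap along those lines you will need it. Once rigidity is in place, the rest of your argument --- distributing the remaining $\Ext$ dimension after a swap, and deducing from $q_Q(d_i)=\dim\End_Q(N_i)-\dim\Ext_Q(N_i)=1$ that the $d_i$ are real Schur roots --- is correct and agrees with the paper.
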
 

\begin{proof}
By Corollary~\ref{23}.\eqref{23b} and Theorem~\ref{12}.\eqref{12c}, we have $N=N_1\oplus N_2$ satisfying the conditions in \eqref{90}.
The representations $N_1$ and $N_2$ are unique because $\Hom_Q(N_i,N)=\Hom_Q(N_i,N_1)\oplus\Hom_Q(N_i,N_2)$ is $1$-dimensional for $i\in\{1,2\}$, and therefore must be generated by the canonical inclusion $N_i\into N_1\oplus N_2=N$. 

The representation $N_i$ is rigid for $i\in\{1,2\}$ because for any representation $N'_i\in\Rep(Q,d_i)$ in a sufficiently small neighborhood of $N_i$, the representation $N'_i\bigoplus N_j$, where $\{i,j\}=\{1,2\}$, is in $D$ by Corollary~\ref{23}.\eqref{23b} and therefore in the orbit of $N$, since this orbit is open in $D$. 
The resulting isomorphism $N'_i\bigoplus N_j\to N$ sends $N'_i$ to $N_i$ by the unicity of $N_1$ and $N_2$.

By \eqref{74}, $\Ext_Q(N)=\bigoplus_{i,j=1}^2\Ext_Q(N_i,N_j)$ is $1$-dimensional, and $\Ext_Q(N_i,N_i)=0$ for $i\in\{1,2\}$ by the rigidity of $N_i$.
Therefore the conditions in \eqref{78} are satisfied after reordering $N_1$ and $N_2$.
Now \eqref{76} shows that $q_Q(N_i)=1$ for $i\in\{1,2\}$ and by rigidity this implies that $d_1$ and $d_2$ are real Schur roots.
\end{proof}

The representation $N$ in Proposition~\ref{73} is, of course, a split extension of $N_2$ by $N_1$.
Where can we find a non-split extension?
The answer is that every representation not in $D$ is the total space of such an extension. 
In fact, using \eqref{14} we can explicitly construct the line $L$ referred to in the proof of Theorem~\ref{3}, that is, a $G_N$-invariant line $L$ meeting $D$ transversely at $N$. 
Here, each point of $L\setminus D$ will be a non-split extension of $N_2$ by $N_1$.

Choose $\theta\in\Hom_{\KK}(sN_2,tN_1)$ so that its image under the inclusion $\Hom_{\KK}(sN_2,tN_1)\to\Hom_{\KK}(tN,hN)$, generates $\coker c_{N,N}$.
Construct a new representation $N(\lambda\theta)$ of $Q$, depending on the complex parameter $\lambda$, as an extension of $N_2$ by $N_1$:

\begin{equation}\label{81}
\xymat@C=.6in@R=.4in{
0\ar[r]&N_1(s\alpha)\ar[r]\ar[d]^{N_1(\alpha)}&N_1(s\alpha)\oplus N_2(s\alpha)\ar[r]\ar[d]^{\begin{pmatrix}N_1(\alpha)&\lambda\theta\\0&N_2(\alpha)\end{pmatrix}}&N_2(s\alpha)\ar[r]\ar[d]^{N_2(\alpha)}&0\\
0\ar[r]&N_1(t\alpha)\ar[r]&N_1(t\alpha)\oplus N_2(t\alpha)\ar[r]&N_2(t\alpha)\ar[r]&0
}
\end{equation}

The representation in the center is $N(\lambda\theta)$.
Evidently $N(\lambda\theta)=N$ when $\lambda=0$.
Since the tangent space to the line $L:=\{N(\lambda\theta):\lambda\in\KK\}$ is spanned by $\theta$, which does not belong to $\img(c_{N,N})=T_ND$, $L$ is a complement to $T_ND$. 
It follows that except for a finite number of values of $\lambda$, $N(\lambda\theta)\notin D$ and the extension \eqref{81} is not split, and therefore generates $\Ext^1_Q(N_2,N_1)$.

\begin{lem}\label{96}
The line $L=\{N(\lambda\theta):\lambda\in\KK\}$ is invariant under the action of the isotropy group $G_N\subset\GL_{Q,d}$ of $N$.
\end{lem}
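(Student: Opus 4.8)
The assertion is that the line $L=\{N(\lambda\theta):\lambda\in\KK\}$ is $G_N$-invariant, where $G_N=\Aut_Q(N)/\KK^*\id_N$. I would work directly with the explicit construction \eqref{81}. An element of $\Aut_Q(N)$ is a $Q$-automorphism $\phi$ of $N=N_1\oplus N_2$; by the homomorphism conditions \eqref{90}, namely $\Hom_Q(N_1,N_2)=0=\Hom_Q(N_2,N_1)$ and $\End_Q(N_i)=\KK\id_{N_i}$, any such $\phi$ is of the shape $\phi=\left(\begin{smallmatrix}a\,\id_{N_1}&\psi\\0&b\,\id_{N_2}\end{smallmatrix}\right)$ with $a,b\in\KK^*$ and $\psi\in\Hom_{\KK}(N_2,N_1)$ (a collection of linear maps $\psi_i\colon N_2(i)\to N_1(i)$, not necessarily a morphism of representations). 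So the first step is to record this normal form for the isotropy group, citing Proposition~\ref{10} and \eqref{90}.

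The second step is a computation: apply such a $\phi$ to the representation $N(\lambda\theta)$ — whose structure map on the arrow $\alpha$ is $\left(\begin{smallmatrix}N_1(\alpha)&\lambda\theta\\0&N_2(\alpha)\end{smallmatrix}\right)$ — and see that the result is again of the form $N(\mu\theta)$ for a suitable $\mu\in\KK$. Concretely, conjugating the arrow-block matrix by the vertex-wise block matrices built from $\phi_{s\alpha}$ and $\phi_{t\alpha}$ (acting as $x_\alpha\mapsto \phi_{t\alpha}x_\alpha\phi_{s\alpha}^{-1}$), the diagonal blocks $N_i(\alpha)$ are preserved because $a\id,b\id$ and the $\psi_i$ interact with them exactly via the commutator term that measures failure of $\psi$ to be a morphism — and this failure, for each $\alpha$, is precisely an element of $\img(c_{N,N})$. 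The off-diagonal $(1,2)$ block becomes $\lambda\cdot\tfrac{a}{b}\,\theta$ plus a term coming from $\psi$ composed with the $N_i(\alpha)$; that correction term is exactly $(c_{N,N}(\psi'))_\alpha$ for the appropriate $\psi'\in\bigoplus_i\Hom(N(i),N(i))$ supported in the $(1,2)$ corner. I would then invoke the defining property of $\theta$: its image spans $\coker c_{N,N}$, and $\Ext_Q(N)$ is $1$-dimensional (Theorem~\ref{12}.\eqref{12c'}), so modulo $\img c_{N,N}$ the new $(1,2)$-block is a scalar multiple of $\theta$. Since $N(\lambda\theta)$ as a point of $\Rep(Q,d)$ is unchanged by adding an element of $\img c_{N,N}=T_ND$ to its structure maps — no: rather, I must argue that the isomorphism class, equivalently the $\GL(Q,d)$-orbit representative on the line $L$, is what matters; but $L$ is a literal affine line in $\Rep(Q,d)$, so I actually need the honest equality $\phi\cdot N(\lambda\theta)=N(\mu\theta)$ on the nose. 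The cleanest way: choose $\psi=0$ first, handle $\phi=\mathrm{diag}(a\id,b\id)$, which visibly sends $N(\lambda\theta)\mapsto N((a/b)\lambda\theta)$; then observe that the unipotent radical $U$ of $\Aut_Q(N)$ (the $\psi$-part) acts trivially on the line because $N(\lambda\theta)$ and $N(\lambda\theta)+c_{N,N}(\psi')$ are equal points only when... — so in fact one shows $c_{N,N}(\psi')=0$ here, since $\psi$ supported in the $(1,2)$-corner that commutes with both diagonal blocks up to the required degree forces $\psi$ to be a genuine morphism $N_2\to N_1$, hence $0$ by \eqref{90}.

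So the honest statement is: the only $\phi\in\Aut_Q(N)$ whose $\psi$-component contributes are none, i.e.\ every isotropy element acts on $L$ through the scaling $\lambda\mapsto(a/b)\lambda$, and $a/b$ ranges over $\KK^*$ as $\GG_m^2\cong\Aut_Q(N)/(U\cdot\ker)$ — matching the character description in diagram \eqref{4}. The third and final step is to assemble these observations into the conclusion that $\phi\cdot L=L$.

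\medskip

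The main obstacle I expect is the bookkeeping in the second step: correctly identifying that the off-diagonal correction produced by a non-morphism $\psi$ lies in $\img(c_{N,N})$ \emph{and simultaneously} that on the line $L$ this correction must actually vanish (not merely vanish modulo $T_ND$), which is what forces each isotropy element to act by an honest scalar on the parameter $\lambda$. Pinning down that the unipotent part $U\subset\Aut_Q(N)$ acts trivially on $L$ — rather than merely preserving it set-theoretically after passing to orbits — is the delicate point; it rests on combining the matrix computation with the vanishing $\Hom_Q(N_1,N_2)=\Hom_Q(N_2,N_1)=0$ from \eqref{90}. Everything else is routine block-matrix manipulation using the extension diagram \eqref{81}.
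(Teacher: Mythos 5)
Your proof is correct and is essentially the paper's argument: the isotropy group is $\GG_m^2$ acting by scalars $(u,v)$ on the two summands, and conjugation sends the off-diagonal block $\lambda\theta$ of $N(\lambda\theta)$ to $uv^{-1}\lambda\theta$, so $L$ is preserved. Your detour through a possible non-morphism off-diagonal component $\psi$ is unnecessary — elements of $G_N=\Aut_Q(N)$ are by definition morphisms of representations, so \eqref{90} kills $\psi$ at the outset and gives $\End_Q(N)=\KK\id_{N_1}\oplus\KK\id_{N_2}$, which is exactly where the paper's proof starts.
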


\begin{proof} 
As observed above, $G_N=G_{N_1\oplus N_2}$ is isomorphic to $\KK^*\times\KK^*$ acting by scalar multiplication on each of the two summands.
Let $(u,v)\in G_N$.
The diagram
\[
\xymat@C=.6in@R=.4in{
N_1(s\alpha)\oplus N_2(s\alpha)
\ar[r]^{\begin{pmatrix}u&0\\0&v\end{pmatrix}}
\ar[d]_{\begin{pmatrix}N_1(\alpha)&\lambda\theta\\0&N_2(\alpha)\end{pmatrix}}  
&N_1(s\alpha)\oplus N_2(s\alpha)\ar[d]^{\begin{pmatrix}N_1(\alpha)
&uv^{-1}\lambda\theta\\
0&N_2(\alpha)\end{pmatrix}}\\
N_1(t\alpha)\oplus N_2(t\alpha)\ar[r]^{\begin{pmatrix}u&0\\0&v\end{pmatrix}}
&N_1(t\alpha)\oplus N_2(t\alpha)
}
\]
commutes; it follows that $(u,v)\cdot N(\lambda\theta)=N(uv^{-1}\lambda\theta)$.
Thus $G_N$ acts on $L$.
\end{proof}
This construction gives a self-contained proof of another sufficient condition for the discriminant in $\Rep(Q,d)$ to be a linear free divisor:

\begin{prp}\label{82}
Let $d$ be a real Schur root of the quiver $Q$, and let $D\subset\Rep(Q,d)$ be the complement of the open orbit. 
If there is a $\GL(Q,d)$-orbit which is open in the irreducible component $D_i$ of $D$, then the equation $f$ of the discriminant $D$ (see \eqref{42}) is reduced along $D_i$. 
If this holds for each component of $D$ then $D$ is a linear free divisor.
\end{prp}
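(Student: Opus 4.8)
The plan is to verify the two conditions of Theorem~\ref{3} for $D$, using the hypothesis to handle the delicate reducedness issue along each component. First I would note that since $d$ is a real Schur root, Remark~\ref{17} (Kac's theorem) gives a unique orbit of indecomposables with dimension vector $d$, and this orbit is open and consists of bricks; thus $V\setminus D$ is a single $\GL(Q,d)$-orbit with isotropy $\KK^*\id$, so after passing to $G=\PGL(Q,d)$ the isotropy is trivial. This establishes condition~\eqref{3a} of Theorem~\ref{3}. It also shows that $q_Q(d)=\dim\GL(Q,d)-\dim\Rep(Q,d)=1$, so that $D$, if a hypersurface at all, has the right degree, and more to the point $\dim G=\dim\Rep(Q,d)-1$, matching the expected dimension of an open orbit in a component of $D$.

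Next I would turn to condition~\eqref{3b}. Fix a component $D_i$ and, by hypothesis, a $\GL(Q,d)$-orbit $\GL(Q,d)N_i$ open in $D_i$. I would argue that the generic point $N_i$ of $D_i$ is decomposable: an indecomposable with dimension vector $d$ lies in the open orbit $V\setminus D$ by Remark~\ref{17}, so the generic representation along $D_i$ decomposes, and since $D_i$ has codimension $1$ the generic decomposition must be into exactly two summands $N_i=N_i'\oplus N_i''$ (a finer decomposition would drop the orbit dimension by more than one). By Proposition~\ref{10}, $\Aut_Q(N_i)$ has reductive part $\GG_m^2$; modulo the central $\KK^*\id$ this gives a reductive subgroup $\GG_m\subseteq G_{N_i}$, realized by scalar multiplication on one of the two summands, which is exactly the $\GG_m$ required in condition~\eqref{3b}. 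The transversal line is supplied by the explicit construction preceding the statement: by Lemma~\ref{96}, $L=\{N_i(\lambda\theta):\lambda\in\KK\}$ is a $G_{N_i}$-invariant line meeting $D$ at $N_i$, and the computation in Lemma~\ref{96} shows $\GG_m$ acts on $L$ through the nontrivial character $\lambda\mapsto uv^{-1}\lambda$, so the normal action of $G_{N_i}$ is through a nontrivial character, i.e.\ $d\rho_{N_i}$ is an isomorphism. Applying \cite[Prop.~2.1.2]{Bri07a} as in the proof of Theorem~\ref{3}, the pair is log parallelizable along $S_i=D_i\setminus\Sing D$; doing this for every component gives that $D$ is a free divisor with $\cL\cong\fg\otimes\cO$ of linear logarithmic fields.

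The main obstacle is reducedness of $f$ along $D_i$ — exactly the point the hypothesis is designed to grant, and which in general can fail (compare the $\GL_n$ example after Remark~\ref{93}, where the discriminant $\det^n$ is nonreduced). Here I would argue that, since $\GL(Q,d)N_i$ is open and dense in $D_i$, it suffices to check reducedness at the generic point $N_i$; and there the transversal line $L$ meets $D$ with multiplicity equal to the order of vanishing of $f|_L$ at $\lambda=0$. Because $N_i(\lambda\theta)$ lies outside $D$ for all but finitely many $\lambda$ and the extension~\eqref{81} is nonsplit for those $\lambda$, while at $\lambda=0$ it is split, one shows $f|_L$ vanishes to order exactly $1$ at the origin: the normal space $\Ext_Q(N_i)=\coker c_{N_i,N_i}$ is one-dimensional (this is forced by $q_Q(d)=1$ together with $\dim\End_Q(N_i)=2$, which in turn follows from the two-summand decomposition), so $L$ is transverse to $T_{N_i}D=\img c_{N_i,N_i}$ and $f$ cannot be divisible by the square of the local equation of $D_i$. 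Hence $f$ is reduced along each $D_i$, and when this holds for all $i$ the product equation of $D$ is reduced, so by Saito's criterion \eqref{42} $D$ is a linear free divisor with group $G$.
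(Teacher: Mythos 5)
Your setup is sound and matches the paper's: the generic point $N_i$ of $D_i$ decomposes into two non-isomorphic bricks with $\dim\End_Q(N_i)=2$, $\dim\Ext_Q(N_i)=1$, and the $G_{N_i}$-invariant line $L=\{N_i(\lambda\theta)\}$ of Lemma~\ref{96} is transverse to $T_{N_i}D_i=\img c_{N_i,N_i}$. The gap is in the last step. Transversality of $L$ to $T_{N_i}D_i$ only says that the \emph{reduced} local equation $g$ of $D_i$ satisfies $\operatorname{ord}_{\lambda=0}(g|_L)=1$; it says nothing about the order of vanishing of the Saito determinant $f$ along $L$. If $f=g^mh$ with $h(N_i)\neq0$, then $f|_L$ vanishes to order $m$, and nothing in your argument rules out $m\geq2$: the whole content of the proposition is precisely to exclude this, and "$L$ is transverse to $T_{N_i}D$, so $f$ cannot be divisible by the square of the local equation" is a non sequitur (it assumes the conclusion). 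The paper closes this gap by an explicit computation of $f|_L$: using the character computation of Lemma~\ref{96}, the element $\sigma'(1)\in\fg_{N_i}$ with $\sigma(u)=(u,1)$ gives a linear vector field $\delta=\omega_0(\sigma'(1))$ that restricts to $\lambda\del_\lambda$ on $L$; completing $\sigma'(1)$ to a basis $\sigma'(1),v_1,\dots,v_{r-1}$ of $\gl(Q,d)$ with $c_{N_i,N_i}(v_1),\dots,c_{N_i,N_i}(v_{r-1})$ spanning $T_{N_i}D_i$, the Saito determinant $\det(\delta,\omega_0(v_1),\dots,\omega_0(v_{r-1}))$ restricted to $L$ is visibly $\lambda$ times a unit, whence $m=1$. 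You have all the ingredients for this (the nontrivial character $\lambda\mapsto uv^{-1}\lambda$ is exactly what makes $\delta|_L=\lambda\del_\lambda$), but you never assemble them into an evaluation of $f$ on $L$.

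A secondary problem is the route through Theorem~\ref{3}. Its condition~\eqref{3b}, and the version of Brion's Prop.~2.1.2 quoted in the paper, require the \emph{whole} stratum $D_i\setminus\Sing D$ to be a single orbit; the hypothesis only gives you an orbit that is open in $D_i$, and you do not show these coincide (a priori $D_i\setminus\Sing D$ could contain smaller orbits). Moreover, the first assertion of the proposition concerns a single component $D_i$ with no hypothesis on the others, so no global application of Theorem~\ref{3} can yield it; the argument must be carried out locally at the generic point of $D_i$, which is exactly where the determinant computation above is needed. (Your final observation — that reducedness along every component plus Saito's criterion gives the second assertion — is correct and is also how the paper concludes.)
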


\begin{proof} 
We continue to use the notation and assumptions of Lemma~\ref{96}.  
Regard $\lambda$ as a coordinate on $L$. 
Identify $G_N$ with $\KK^*\times\KK^*$ as described above, and consider the curve $\sigma(u)=(u,1)\in G_N$.
As $(u,1)\cdot N(\lambda\theta)=N(u\lambda\theta)$, we have
\begin{equation}\label{83}
\frac{d}{du}\left(\sigma(u)\cdot N(\lambda\theta)\right)\vert_{u=1}
=\begin{pmatrix}0&\lambda\theta\\0&0\end{pmatrix}
=\lambda\del_{\lambda}N(\lambda\theta).
\end{equation}
Now consider $\sigma'(1)$ as an element of $\gl(Q,d)$ via the inclusion $G_N\subset \GL(Q,d)$. 
Via the infinitesimal action, it gives rise to a linear vector field $\delta$ on $\Rep(Q,d)$; 
\eqref{83} means that $\delta$ restricts to $\lambda\del_{\lambda}$ on $L$.

Since the corank at $N $ of the map $c_{N,N}$ is $1$, its image generates the tangent space $T_ND$. 
So we can choose vectors $v_i\in\gl(Q,d)$, $i=1,\dots,r-1$, whose images under $c_{N,N}$ form a basis for $T_ND$, where $r=\dim\Rep(Q,d)$. 
These extend to vector fields $\delta_i=\omega_0(v_i)$, $i=1,\dots,r-1$ (see \eqref{26}).
The determinant $\Delta:=\det(\delta,\delta_1,\dots,\delta_{r-1})$ is reduced at $N$ since it is a unit times $\lambda$ when restricted to $L$. 
Since it is not identically zero, the vectors $\sigma'(1),v_1,\dots,v_{r-1}$ form a basis for $\gl(Q,d)$, so $\Delta$ is a non-zero complex multiple of $f$. 
It follows that $f$ is reduced along $D_i$.
\end{proof}

The center of $\GL(Q,d)$ is the quotient $\prod_{i\in Q_0}Z(\GL_{d_i}(\KK))/\KK^*\id$, and thus has dimension equal to $\#Q_0-1$. 
Therefore it is an immediate consequence of Lemma~\ref{85} that for a quiver linear free divisor, the number of irreducible components is one less than the number of nodes in the quiver. 
This statement, which is valid in greater generality, is attributed by Schofield in \cite{Sch91} to V.~Kac. 
Together with Theorem~\ref{40}, it gives the following strong restriction on those quivers $Q$ which can define a linear free divisor.

\begin{thm}\label{44}
If $(Q,d)$ defines a linear free divisor then $Q$ is a tree.
\end{thm}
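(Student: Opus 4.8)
The plan is to combine the two inputs provided just above the statement: the formula of Lemma~\ref{85} (the number of irreducible components of a reductive linear free divisor equals $\dim Z(G)$, which for $G=\PGL(Q,d)$ is $\#Q_0-1$), together with Theorem~\ref{40} (for a reductive linear free divisor the number of irreducible $G$-modules in $V$ equals the number of irreducible components of $D$, and these modules are pairwise non-isomorphic). First I would make explicit the decomposition of $V=\Rep(Q,d)$ into $\GL(Q,d)$-submodules coming from the presentation \eqref{45}: for each arrow $\alpha\colon i\to j$ in $Q_1$ the summand $\Hom(\KK^{d_i},\KK^{d_j})=\Mat_{d_j\times d_i}(\KK)$ is itself a $\GL(Q,d)$-submodule on which $(g_k)_k$ acts by $x_\alpha\mapsto g_j x_\alpha g_i^{-1}$, and since $d$ is sincere ($d_k\ge 1$ for all $k$) this module is nonzero. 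Thus $V$ decomposes as a direct sum of at least $\#Q_1$ nonzero $\GL(Q,d)$-submodules, hence $G=\PGL(Q,d)$-submodules since the scalars act trivially. Refining each of these into irreducibles only increases the count, so the number $\ell$ of irreducible $G$-modules in $V$ satisfies $\ell\ge\#Q_1$.

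Next I would assemble the inequalities. By Theorem~\ref{40}, $\ell$ equals the number $k$ of irreducible components of $D$, and by Lemma~\ref{85} (applied via the center computation $\dim Z(\PGL(Q,d))=\#Q_0-1$, exactly as in the paragraph preceding the theorem) we have $k=\#Q_0-1$. Combining with $\ell\ge\#Q_1$ gives $\#Q_1\le\ell=k=\#Q_0-1$, i.e. $\#Q_1\le\#Q_0-1$. Since $Q$ is assumed connected, a connected graph on $\#Q_0$ vertices has at least $\#Q_0-1$ edges, with equality exactly when it is a tree; so $\#Q_1=\#Q_0-1$ and $Q$ is a tree. (This argument simultaneously shows the decomposition of $V$ into the arrow-summands $\Hom(\KK^{d_i},\KK^{d_j})$ is already a decomposition into \emph{irreducible} $G$-modules, since otherwise $\ell$ would be strictly larger than $\#Q_1=\#Q_0-1$; this is a useful byproduct worth remarking, but it is not needed for the conclusion.)

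The one point requiring a little care — and the step I expect to be the main obstacle — is the claim that each arrow-summand $\Hom(\KK^{d_i},\KK^{d_j})$ is genuinely a nonzero $G$-submodule that survives the passage from $\GL(Q,d)$ to $\PGL(Q,d)=G$; here one uses that $\KK^*\id$ acts trivially on all of $\Rep(Q,d)$, which is recorded in \S\ref{110}, so there is no collapse, and sincerity of $d$ guarantees $d_i,d_j\ge 1$ so no summand vanishes. One should also note that Theorem~\ref{40} genuinely applies: a quiver linear free divisor is reductive with group $G=\PGL(Q,d)$ by the definition in \S\ref{110}, so both Lemma~\ref{85} and Theorem~\ref{40} are available. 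With these remarks in place the proof is short; essentially it is the chain
\[
\#Q_1\ \le\ \ell\ =\ k\ =\ \dim Z(G)\ =\ \#Q_0-1,
\]
forcing equality everywhere and hence $Q$ a tree.
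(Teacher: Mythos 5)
Your proof is correct and follows essentially the same route as the paper: decompose $\Rep(Q,d)$ into the arrow-summands, count irreducible components via $\dim Z(\PGL(Q,d))=\#Q_0-1$ (Lemma~\ref{85}), and invoke Theorem~\ref{40}. The only (harmless) difference is that the paper observes each summand $\Mat_{d_j\times d_i}(\KK)$ is already an \emph{irreducible} $\GL(Q,d)$-module, giving $\ell=\#Q_1$ directly, whereas you settle for $\ell\ge\#Q_1$ and close the gap with the graph-theoretic bound $\#Q_1\ge\#Q_0-1$ for connected graphs.
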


\begin{proof}
Each summand of $\Rep(Q,d)$ in \eqref{45} is an irreducible $\GL(Q,d)$-module.
So their number equals $\ell=\# Q_1$.
The number $k$ of irreducible components of $D$ equals $\# Q_0-1$.
By Theorem~\ref{40}, we have $\# Q_1=\ell=k=\# Q_0-1$ and hence $Q$ is a tree, as it is connected.
\end{proof}

\begin{cor}\label{36}
Let $(Q,d)$ define a linear free divisor.
Then there is a partition $Q_0=\bigcup_{k=0}^mQ_0^k$ such that $Q_1\subset\bigcup_{k=1}^mQ_0^{k-1}\times Q_0^{k}$.\qed
\end{cor}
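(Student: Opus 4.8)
The plan is to deduce this purely from Theorem~\ref{44}, which tells us that $Q$ is a tree. The statement asks for a layering (graded partition) $Q_0=\bigcup_{k=0}^mQ_0^k$ of the vertex set such that every arrow goes from layer $k-1$ to layer $k$ for some $k$. This is a standard "the underlying graph is a bipartite tree, hence admits a level function compatible with orientations" kind of assertion, and the only subtlety is making the orientations consistent with the layering.

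First I would fix a vertex and use the tree structure to define a distance function, but distance alone ignores orientation, so instead I would argue as follows. Since $Q$ is a tree, its underlying graph is bipartite; moreover, because there are no unoriented cycles, between any two vertices there is a unique reduced path, and I can define for each vertex $i\in Q_0$ an integer $h(i)$ by choosing a root $i_0$, setting $h(i_0)=0$, and declaring that traversing an arrow $\alpha$ in its given direction increases $h$ by $1$ while traversing it backwards decreases $h$ by $1$; concretely $h(i)=(\text{number of forward arrows})-(\text{number of backward arrows})$ along the unique reduced path from $i_0$ to $i$. This is well-defined precisely because the path is unique (no unoriented cycles). By construction, for any arrow $\alpha\in Q_1$ we have $h(t\alpha)=h(s\alpha)+1$. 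Finally I would shift $h$ by a constant so that its minimum value is $0$, set $m=\max h-\min h$, and put $Q_0^k=h^{-1}(k)$; then $Q_0=\bigcup_{k=0}^mQ_0^k$ and every arrow $\alpha$ lies in $Q_0^{h(s\alpha)}\times Q_0^{h(s\alpha)+1}$, giving the required inclusion $Q_1\subset\bigcup_{k=1}^mQ_0^{k-1}\times Q_0^k$.

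The only point requiring care — and the one I would phrase most carefully — is the well-definedness of $h$: one must check that the "signed length" of a walk from $i_0$ to $i$ is independent of the walk, which reduces to showing that every closed walk has signed length zero. For a tree this is immediate since any closed walk in a tree backtracks completely, so each forward traversal of an arrow is cancelled by a later backward traversal of the same arrow. Thus no real obstacle arises; the corollary is essentially a reformulation of Theorem~\ref{44}, and the proof is a short combinatorial observation, which is presumably why the statement is marked \qed in the excerpt with no written proof.
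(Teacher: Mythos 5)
Your proof is correct and is precisely the argument the paper leaves implicit behind the \qed: once Theorem~\ref{44} gives that $Q$ is a tree, the signed-length (level) function $h$ is well defined by uniqueness of paths, every arrow raises $h$ by exactly $1$, and the fibres of $h$ (shifted to start at $0$) give the required partition. One could add, if one wanted the levels $Q_0^0,\dots,Q_0^m$ to be nonempty, that connectedness of $Q$ together with the fact that $h$ changes by $\pm1$ along each edge forces the image of $h$ to be an interval of integers; but this is cosmetic and your argument is complete as written.
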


\begin{prp}\label{89}
Proposition \ref{88} applies to quiver linear free divisors.  
\end{prp}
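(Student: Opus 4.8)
The strategy is to check the hypothesis of Lemma~\ref{87} for $G=\PGL(Q,d)$ acting on $V=\Rep(Q,d)$ (recall from \S\ref{110} that this $G$ does embed in $\GL(V)$), after which Lemma~\ref{87} and hence Proposition~\ref{88} apply to the quiver linear free divisor $D$. By the remark following Lemma~\ref{87} it is enough to produce coordinates on $V$ in which some maximal torus of $G$ is diagonal and has pairwise distinct weights on the coordinate functions. For this I take the standard matrix--entry coordinates $x_{\alpha,p,q}$ on $\Rep(Q,d)=\bigoplus_{\alpha\in Q_1}\Mat_{d_{t\alpha}\times d_{s\alpha}}(\KK)$, where $1\le p\le d_{t\alpha}$ and $1\le q\le d_{s\alpha}$, and let $T$ be the image in $G=\PGL(Q,d)$ of the product $\tilde T=\prod_{i\in Q_0}T_i$ of the diagonal tori $T_i\subset\GL_{d_i}(\KK)$. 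From the $\GL(Q,d)$-action one reads off that $\tilde T$ scales $x_{\alpha,p,q}$ by the character $\epsilon_{t\alpha,p}-\epsilon_{s\alpha,q}$, where $\epsilon_{i,q}\in\X^*(\tilde T)$ records the $q$-th diagonal entry of the $i$-th factor; so $T$ acts diagonally in these coordinates, with the weight $\lambda_{(\alpha,p,q)}$ of Lemma~\ref{87} being the character of $T$ induced by $\epsilon_{t\alpha,p}-\epsilon_{s\alpha,q}$.

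It remains to show that the $\lambda_{(\alpha,p,q)}$ are pairwise distinct. The surjection $\tilde T\twoheadrightarrow T$ induces an injection $\X^*(T)\hookrightarrow\X^*(\tilde T)$, so it suffices to show the pulled-back weights $\epsilon_{t\alpha,p}-\epsilon_{s\alpha,q}\in\X^*(\tilde T)$ are pairwise distinct, which is convenient since the $\epsilon_{i,q}$ form a $\ZZ$-basis of $\X^*(\tilde T)$. Suppose $\epsilon_{t\alpha,p}-\epsilon_{s\alpha,q}=\epsilon_{t\alpha',p'}-\epsilon_{s\alpha',q'}$, equivalently $\epsilon_{t\alpha,p}+\epsilon_{s\alpha',q'}=\epsilon_{t\alpha',p'}+\epsilon_{s\alpha,q}$; comparing these sums of two basis vectors, the unordered pairs $\{(t\alpha,p),(s\alpha',q')\}$ and $\{(t\alpha',p'),(s\alpha,q)\}$ coincide. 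By Theorem~\ref{44}, $Q$ is a tree, hence has no loops, so $(t\alpha,p)\ne(s\alpha,q)$ and $(t\alpha',p')\ne(s\alpha',q')$; the only remaining possibility is $(t\alpha,p)=(t\alpha',p')$ and $(s\alpha,q)=(s\alpha',q')$. Then $\alpha$ and $\alpha'$ have the same source and target, and since a tree contains at most one edge between a given pair of vertices, $\alpha=\alpha'$; therefore $(\alpha,p,q)=(\alpha',p',q')$.

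This verifies the hypothesis of Lemma~\ref{87}, and the proposition follows. The only real content is the last paragraph: an equality of weights could only come from a loop or from two distinct parallel arrows, and Theorem~\ref{44} rules out both, so it is precisely the tree property that makes the argument work; the passage from $\GL(Q,d)$ to $\PGL(Q,d)$ is harmless because it only shrinks, never collapses, the weight lattice.
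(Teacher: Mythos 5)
Your proof is correct and follows essentially the same route as the paper: matrix-entry coordinates on $\Rep(Q,d)$, the image of the product of diagonal tori as the maximal torus, the weight of each coordinate computed as a difference of diagonal characters, and Theorem~\ref{44} (no loops, no parallel arrows) to rule out coincidences. Your additive multiset argument in the character lattice of $\tilde T$, together with the injection $\X^*(T)\into\X^*(\tilde T)$, is a slightly cleaner way of packaging the distinctness check than the paper's case analysis, but it is the same idea.
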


\begin{proof}
We have $\Rep(Q,d)=\bigoplus_{\alpha\in Q_1}\Hom(\KK^{d_{s\alpha}},\KK^{d_{t\alpha}})$.
Take the entries $c^{(\alpha)}_{i,j}$ in the generic matrix, as coordinates on $\Hom(\KK^{d_{s\alpha}},\KK^{d_{t\alpha}})$. 
The $c^{(\alpha)}_{i,j}$, as $\alpha$ ranges over $Q_1$, give a set of coordinates on $\Rep(Q,d)$. 
For $k\in Q_0$, let $T^{(k)}$ be the set of diagonal matrices in $\GL_{d_k}(\KK)$, and let $\lambda^{(k)}_j\colon T^{(k)}\to\KK^*$ be the $j$'th diagonal element. 
The group $\GL(Q,d)$ contains $T=\prod_{k\in Q_0}T^{(k)}/\KK^*\id$ as a maximal torus; we denote its members by $\# Q_0$-tuples $\tau=(\tau^{(k)})_{k\in Q_0}$. 
Let $\lambda^{(\alpha)}_{i,j}$ be the character on $T$ corresponding to the coordinate $c^{(\alpha)}_{i,j}$ on $\Rep(Q,d)$. 
That is, if $\{e^{(\alpha)}_{i,j}\}$ is the basis for $\Rep(Q,d)$ dual to the coordinate system $\{c^{(\alpha)}_{i,j}\}$, 
\[
\lambda^{(\alpha)}_{i,j}(\tau)=c^{(\alpha)}_{i,j}(\tau\cdot e^{(\alpha)}_{i,j}).
\]
Then we have 
\[
\lambda^{(\alpha)}_{i,j}(\tau)=\lambda^{(t\alpha)}_i(\tau^{(t\alpha)})(\lambda^{(s\alpha)}_j(\tau^{(s\alpha)}))^{-1}
\]
For fixed $\alpha$, $\lambda^{(\alpha)}_{i_1,j_1}\neq\lambda^{(\alpha)}_{i_2,j_2}$ if $i_1\neq i_2$ (or $j_1\neq j_2$), since then $\lambda^{(s\alpha)}_{i_1}\neq\lambda^{(s\alpha)}_{i_2}$ (or $\lambda^{(t\alpha)}_{j_1}\neq\lambda^{(t\alpha)}_{j_2}$), and the 
$\lambda^{(s\alpha)}_i$ and $\lambda^{(t\alpha)}_j$ are algebraically independent if $s\alpha\ne t\alpha$ and $\# Q_1>1$.
The former holds by Theorem~\ref{44}, and the case $\# Q_1=1$ is trivial.
For $\alpha\neq\beta$, $\lambda^{(\alpha)}_{i_1,j_1}\neq\lambda^{(\beta)}_{i_2,j_2}$ for all $i_1,j_1,i_2,j_2$, since by Theorem~\ref{44} either $s\alpha\neq s\beta$ or $t\alpha\neq t\beta$.
\end{proof}

Notice that in the proof we make use of the fact that a quiver giving rise to a linear free divisor has no pairs of arrows with the same source and the same target, and no arrow whose source is the same as its target, both of which follow from Theorem \ref{44}.

\subsection{Dynkin quivers}

A quiver $Q$ is a \emph{Dynkin quiver} if its underlying graph is a Dynkin diagram of type $A_n$, $D_n$, $E_6$, $E_7$ or $E_8$. 
Gabriel proved in \cite{Gab72}, \cite{Gab80} that the Dynkin quivers are precisely those of ``finite representation type'': that is, such that for any $d$, $\Rep(Q,d)$ contains only finitely many orbits (see also \cite{BGP73} for a simpler account). 
If $Q$ is a Dynkin quiver and $d$ is any dimension vector then $\Rep(Q,d)$ therefore contains an open orbit. 
If moreover  $q_Q(d)=1$, then $\dim\PGL(Q,d)=\dim \Rep(Q,d)$; it follows that the Lie algebra of linear vector fields coming from the infinitesimal action of $\GL(Q,d)$ on $\Rep(Q,d)$ has dimension equal to that of $\Rep(Q,d)$. 
Thus all that is needed for the discriminant $D$ to be a linear free divisor is that the determinant of the Saito matrix should be reduced. 
Again, because $Q$ is a Dynkin quiver, in each irreducible component of $D$ there is an open orbit.
It follows by Lemma~\ref{82} that the discriminant is reduced.  
This gives a concise proof of the following theorem from \cite[Cor.~5.5]{BM06} (in which the first statement is well-known).

\begin{thm}\label{111}
If $Q$ is a Dynkin quiver and $q_Q(d)=1$ then $d$ is a real Schur root and the discriminant in $\Rep(Q,d)$ is a linear free divisor. 
\end{thm}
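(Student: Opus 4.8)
The plan is to assemble Theorem~\ref{111} almost entirely from machinery already developed in the excerpt, with the only genuinely new input being that Dynkin quivers have finite representation type. First I would recall that if $Q$ is Dynkin, then for every dimension vector $\Rep(Q,d)$ has finitely many $\GL(Q,d)$-orbits, so one of them is open and dense; this is Gabriel's theorem, which I would cite rather than prove. Under the hypothesis $q_Q(d)=1$, the identity \eqref{76} gives $\dim\End_Q(M)-\dim\Ext_Q(M)=1$ for a representation $M$ in the open orbit, and since the open orbit forces $\Ext_Q(M)=0$ (its normal space \eqref{74} vanishes), we get $\dim\End_Q(M)=1$, i.e.\ $M$ is a brick. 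Combined with the open orbit of indecomposables guaranteed by Kac's theorem (Remark~\ref{17}), this shows $d$ is a real Schur root, establishing the first assertion.

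For the second assertion I would invoke Proposition~\ref{82}. Its hypotheses require $d$ to be a real Schur root — just shown — and that each irreducible component $D_i$ of the discriminant $D$ contain an open $\GL(Q,d)$-orbit. But $D_i$ is an irreducible closed subvariety of $\Rep(Q,d)$, hence itself a union of finitely many orbits by Gabriel's theorem, and so it must contain one that is open in $D_i$. Proposition~\ref{82} then yields directly that the defining equation $f$ of the discriminant is reduced along each $D_i$, hence reduced, so $D$ is a linear free divisor. It is worth noting along the way that $\dim\PGL(Q,d)=\dim\Rep(Q,d)$: indeed $\dim\GL(Q,d)-\dim\Rep(Q,d)=q_Q(d)=1$ and $\PGL(Q,d)$ is the quotient of $\GL(Q,d)$ by the one-dimensional central torus, so the infinitesimal action produces a Lie algebra of linear vector fields of dimension exactly $\dim\Rep(Q,d)$ — this is what makes "reducedness of the Saito determinant" the only remaining obstruction, exactly as exploited in Proposition~\ref{82}.

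The only subtle point — and the step I would treat most carefully — is the passage from "finitely many orbits in $\Rep(Q,d)$" to "an open orbit in each component $D_i$ of $D$". One must check that an irreducible component of $D$, being a constructible (indeed closed) subset stratified by finitely many $\GL(Q,d)$-orbits, necessarily has one of those orbits dense in it; this is immediate because a finite union of locally closed sets covering an irreducible variety must have one member whose closure is the whole variety. I would state this briefly and then hand off to Proposition~\ref{82}. Everything else is a bookkeeping assembly of \eqref{74}, \eqref{76}, Remark~\ref{17}, and Proposition~\ref{82}, so there is no real computational burden; the proof should be just a few lines.
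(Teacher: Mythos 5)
Your proposal is correct and follows essentially the same route as the paper: Gabriel's finiteness theorem supplies an open orbit in $\Rep(Q,d)$ and in each component of $D$, the equality $\dim\PGL(Q,d)=\dim\Rep(Q,d)$ reduces everything to reducedness of the Saito determinant, and Proposition~\ref{82} delivers that. The only difference is that you spell out, via \eqref{74} and \eqref{76}, why the generic representation is a brick and hence why $d$ is a real Schur root, a point the paper dismisses as well known.
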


\subsection{Relative invariants}

For a quiver $Q$ and a real Schur root $d\in\NN^{Q_0}$, $\Rep(Q,d)$ 
is prehomogeneous for the action of $\GL(Q,d)$ by Remark~\ref{17}.
Denote by $M(Q,d)^\perp$ the right orthogonal category of the generic representation $M(Q,d)$ in $\Rep(Q,d)$, that is, the full subcategory of all $M$ for which
\[
\Hom(M(Q,d),M)=0=\Ext_Q(M(Q,d),M).
\]
For $M\in M(Q,d)^\perp$, $f_{d,M}$ defined by $f_{d,M}(x)=\det(c_{x,M})$ using \eqref{11}, where $x$ are coordinates in $\Rep(Q,d)$, is a relative invariant as considered by Schofield in \cite[\S1]{Sch91}.
We denote the character associated to $f_{d,M}$ by $\chi_{d,M}$.
Similarly one defines relative invariants $f_{M,d}$ with associated characters $\chi_{M,d}$ for $M$ in the left orthogonal category $^\perp M(Q,d)$ of $M(Q,d)$ defined by
\[
\Hom(M,M(Q,d))=0=\Ext_Q(M,M(Q,d)).
\]
Assuming that $Q$ has no oriented loops and that $d$ is sincere, Schofield~\cite[Thm.~4.3]{Sch91} showed that if $M_1,\dots,M_m$, $m=\# Q_1-1$, are the simple objects in $^\perp M(Q,d)$, then $f_{M_1,d},\dots,f_{M_m,d}$ form a basis of the ring of all polynomial relative invariants; the analogous statement holds true for $M(Q,d)^\perp$.
In particular, $f_{M_1,d}\cdots f_{M_r,d}$ is a reduced equation for the discriminant $D$ in $\Rep(Q,d)$.
On the other hand, an equation for $D$ is given by
\begin{equation}\label{27}
f(x)=f_d(x)=\det(\bar c_x)
\end{equation}
with character $\chi=\chi_d$, where $\bar c_x$ is the map induced by $c_x$ in \eqref{14} by factoring out the diagonal $\KK\id$ from $\End\left(\bigoplus_{i\in Q_0}V_i\right)$.
Thus, Saito's criterion proves 

\begin{prp}\label{97}
Let $Q$ be a quiver without oriented loops and with a sincere real Schur root $d\in\NN^{Q_0}$.
Then the discriminant $D$ in $\Rep(Q,d)$ is a linear free divisor if and only if
\[
\sum_{i=1}^r\deg(f_{M_i,d})=\deg(f_d)=\dim\Rep(Q,d).
\]
\end{prp}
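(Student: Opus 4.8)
The plan is to apply Saito's criterion in the form used throughout Section~\ref{16}, since by hypothesis $d$ is a real Schur root, so $\Rep(Q,d)$ is prehomogeneous with open orbit complement $D$, the group $G=\PGL(Q,d)$ has dimension $q_Q(d)\cdot$something --- more precisely $\dim\PGL(Q,d)=\dim\Rep(Q,d)$ exactly when $q_Q(d)=1$, which is forced here since $\#Q_1-1$ of the $f_{M_i,d}$ generate the invariants and $\dim\Rep(Q,d)-\dim\PGL(Q,d)=\dim\Ext_Q(M(Q,d))=q_Q(d)-1$ vanishes as soon as the divisor $D$ actually has the right degree. So the first step is to record that the infinitesimal action gives a map $\omega\colon\fg\otimes\cO\to\cL=\Der(-\log D)$ with $\dim\fg=\dim\Rep(Q,d)=n$, and that $D=\Z(f_d)$ with $f_d=\det(\bar c_x)$ as in \eqref{27}; by Saito's criterion \cite[Thm.~1.8]{Sai80}, $D$ is a linear free divisor if and only if this $f_d$ is reduced.

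Next I would translate "reduced" into the degree condition. The map $\bar c_x$ is the Saito matrix for the basis $\omega_0(v_1),\dots,\omega_0(v_n)$ of $\fl$ coming from a basis of $\gl(Q,d)/\KK\id$, so $f_d=\det(\bar c_x)$ is, up to a nonzero scalar, the determinant appearing in \eqref{42}; it is always a defining equation for $D$ as a set, possibly non-reduced. Its degree is $\deg(f_d)=\dim\Rep(Q,d)=n$ by construction (each of the $n$ columns of $\bar c_x$ is linear in $x$). On the other hand, by Schofield's theorem \cite[Thm.~4.3]{Sch91}, the reduced equation of $D$ is $\prod_{i=1}^m f_{M_i,d}$ with $m=\#Q_1-1=r$, of total degree $\sum_{i=1}^r\deg(f_{M_i,d})$. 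Since $f_d$ vanishes exactly on $D$, it is divisible by this reduced equation, and it equals it (up to scalar) precisely when the two degrees agree. Thus $D$ is reduced --- equivalently, $f_d$ is a reduced equation, equivalently $D$ is a linear free divisor --- if and only if $\sum_{i=1}^r\deg(f_{M_i,d})=\deg(f_d)=\dim\Rep(Q,d)$.

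I would then just assemble these observations: the "only if" direction is immediate (if $D$ is a linear free divisor then $f_d$ is reduced by Saito, hence equals $\prod f_{M_i,d}$ up to scalar, hence the degrees match, and $\deg f_d=\dim\Rep(Q,d)$ by inspection of \eqref{27}); the "if" direction is the substantive one (if the degrees match then $f_d$, being divisible by the reduced equation $\prod f_{M_i,d}$ and of the same degree, is itself reduced, so Saito's criterion shows $D$ is linear free with group $G=\PGL(Q,d)$). One should also note that $\deg(f_{M_i,d})$ here refers to the generators of the invariant ring, which by Schofield are the $f_{M_i,d}$ for $M_i$ the simple objects of $^\perp M(Q,d)$, so the sum $\sum_{i=1}^r$ is over exactly those $r=\#Q_1-1$ generators.

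The main obstacle is making sure the degree bookkeeping is airtight: one must know that $f_d=\det(\bar c_x)$ is genuinely a power of the reduced equation with no extraneous factors, i.e.\ that its zero set is exactly $D$ with nothing on $\Sing D$ behaving badly. This follows because the complement of $D$ is the single open orbit (so $f_d$ is nonzero off $D$) and $f_d$ is a relative invariant with character $\chi_d$, hence its irreducible factors are themselves relative invariants, each of which (again by Schofield) is a product of the $f_{M_i,d}$; so $f_d=c\prod_{i=1}^r f_{M_i,d}^{a_i}$ with $a_i\ge1$, and the degree equality forces all $a_i=1$. Once that is in hand the proof is a one-line invocation of Saito's criterion.
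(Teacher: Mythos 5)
Your argument is correct and is essentially the paper's own (very terse) proof spelled out: Schofield's theorem identifies $\prod_i f_{M_i,d}$ as the reduced equation of $D$, the Saito determinant $f_d=\det(\bar c_x)$ is a homogeneous equation for $D$ of degree $\dim\Rep(Q,d)$ divisible by that reduced equation, and Saito's criterion converts reducedness of $f_d$ into the stated degree equality. The only blemish is your opening justification of $q_Q(d)=1$ (needed so that $\bar c_x$ is square), which is circular as written; that fact is immediate from the hypothesis that $d$ is a \emph{real} Schur root and does not depend on the degree condition.
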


The characters of $\GL(Q,d)$ are products of determinants on the factors $\GL_{d_i}(\KK)$, $i\in Q_0$, and hence $\X^*(\GL(Q,d))$ can be identified with $\ZZ^{Q_0}$.
Schofield~\cite[Lem.~1.4]{Sch91} and Buchweitz and Mond~\cite[Lem.~6.6]{BM06} proved the following

\begin{lem}\label{20}
Let $Q$ be a quiver and $d\in\NN^{Q_0}$.
We have $\chi_{d,M}=Em$ for $M\in M(Q,d)^\perp$ and $\chi_{M,d}=-E^tm$ for $M\in{^\perp M(Q,d)}$, where $m=\ddim(M)$.
Moreover, $\chi_d=(E-E^t)d$ in case $d$ is a real Schur root.
\end{lem}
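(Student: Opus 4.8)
The plan is to read off the characters directly from the defining formulae $f_{d,M}(x)=\det(c_{x,M})$ and $f_{M,d}(x)=\det(c_{M,x})$, using the Ringel sequence \eqref{11} and the way $\GL(Q,d)$ acts on $\Rep(Q,d)$. The key observation is that a square matrix depending on $x\in\Rep(Q,d)$ transforms, under the $\GL(Q,d)$-action, by multiplication on the left and right by blocks built out of the $g_i$, so its determinant multiplies by a product of powers of $\det g_i$; the exponents are exactly the source/target counts encoded in the Euler matrix. Concretely, for $M\in M(Q,d)^\perp$ with $\ddim M=m$, the map $c_{x,M}$ goes from $\bigoplus_{i\in Q_0}\Hom(V_i,W_i)$ to $\bigoplus_{\alpha\in Q_1}\Hom(V_{s\alpha},W_{t\alpha})$, and it is an isomorphism precisely on the open orbit (since $\Hom_Q(M(Q,d),M)=0=\Ext_Q(M(Q,d),M)$ forces both flanking terms in \eqref{11} to vanish for the generic $x$). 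So $f_{d,M}$ is a genuine relative invariant, and I only need to track how $\det(c_{x,M})$ scales.

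First I would fix bases and write $c_{x,M}$ as a block matrix. Acting by $g=(g_i)\in\GL(Q,d)$ replaces $x_\alpha$ by $g_{t\alpha}x_\alpha g_{s\alpha}^{-1}$; precomposing the map $\phi=(\phi_i)\mapsto(\phi_{t\alpha}x_\alpha-\cdots)$ appropriately shows that $c_{g\cdot x,M}=R\circ c_{x,M}\circ S$ for block-diagonal $R,S$ whose blocks on the source side contribute $(\det g_i)^{-m_i}$ (one factor of $\Hom(V_i,W_i)=\Hom(\KK^{d_i},\KK^{m_i})$ scales by $(\det g_i)^{-m_i}$) and on the target side contribute $\prod_{\alpha}(\det g_{t\alpha})^{m_{s\alpha}}$ over the arrow blocks $\Hom(V_{s\alpha},W_{t\alpha})$. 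Collecting exponents of $\det g_j$ gives $m_j-\sum_{\alpha:\,t\alpha=j}m_{s\alpha}=(E m)_j$, since $E=I-A$ and $A$ counts arrows. Hence $f_{d,M}(g\cdot x)=\prod_j(\det g_j)^{(Em)_j}f_{d,M}(x)$, i.e.\ $\chi_{d,M}=Em$ under the identification $\X^*(\GL(Q,d))\cong\ZZ^{Q_0}$. The statement $\chi_{M,d}=-E^tm$ is the mirror computation: now the $\Hom(W_i,V_i)$ blocks scale by $(\det g_i)^{+m_i}$ and the arrow blocks by $(\det g_{s\alpha})^{-m_{t\alpha}}$, yielding exponent $-m_j+\sum_{\alpha:\,s\alpha=j}m_{t\alpha}=-(E^t m)_j$. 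One has to be slightly careful with signs and with which side of $c$ absorbs which factor, but it is bookkeeping.

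For the last assertion, when $d$ is a real Schur root the generic representation $M(Q,d)$ is itself a brick, and $f_d$ from \eqref{27} is, up to a nonzero scalar, the product of the $f_{M_i,d}$ (Schofield's basis theorem), or more directly: $f_d(x)=\det(\bar c_x)$ where $\bar c_x$ is $c_x$ from \eqref{14} with one copy of the diagonal $\KK\id$ removed. Acting by $g$ now multiplies on the left (the $\bigoplus_i\End(V_i)$ side) by $\prod(\det g_i)^{-d_i}$, on the right (the arrow side $\bigoplus_\alpha\Hom(V_{s\alpha},V_{t\alpha})$) by $\prod_\alpha(\det g_{t\alpha})^{d_{s\alpha}}(\det g_{s\alpha})^{-d_{t\alpha}}$; removing the $\KK\id$ factor is harmless for the character since that line is fixed. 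The total exponent of $\det g_j$ is $-d_j+\sum_{\alpha:t\alpha=j}d_{s\alpha}-\bigl(-d_j\bigr)+\bigl(-\sum_{\alpha:s\alpha=j}d_{t\alpha}\bigr)$; after the $-d_j$ and $+d_j$ from the two diagonal-side contributions cancel against each other in the reduced map, what remains is $-(E^t d)_j+(\text{stuff})$, and reorganizing gives exactly $(Ed)_j-(E^t d)_j=\bigl((E-E^t)d\bigr)_j$. So $\chi_d=(E-E^t)d$.

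I expect the main obstacle to be purely organizational: getting the direction of each $\det g_i$ factor right (source versus target, contragredient versus not) and confirming that deleting the diagonal $\KK\id$ in passing from $c_x$ to $\bar c_x$ does not alter the character — both are routine once the block decomposition of \eqref{11} and \eqref{14} under the $\GL(Q,d)$-action is written out carefully. An alternative, if one prefers to avoid the explicit matrix bookkeeping, is to differentiate $f_{d,M}(\rho(g)x)=\chi_{d,M}(g)f_{d,M}(x)$ as in \eqref{6} and evaluate $d\chi_{d,M}$ on the diagonal one-parameter subgroups $t\mapsto(\dots,t,\dots)$ supported at a single vertex, reducing everything to computing the trace of the corresponding infinitesimal action on $c_{x,M}$; this again delivers the Euler-matrix coefficients but organizes the signs via Lie-algebra characters rather than determinants.
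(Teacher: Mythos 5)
First, a point of reference: the paper does not prove this lemma at all --- it is quoted from Schofield [Lem.~1.4] and Buchweitz--Mond [Lem.~6.6]. Your plan of proving it by direct computation is therefore reasonable, and your overall strategy --- write $c_{g\cdot x,M}=R_g\circ c_{x,M}\circ S_g$ with $R_g,S_g$ block-diagonal, and read the character off $\det R_g\det S_g$ --- is exactly how the cited sources obtain the formula. The supporting observations (squareness of $c_{x,M}$ from $\langle d,m\rangle_Q=0$, non-vanishing on the open orbit from the two orthogonality conditions, and the fact that factoring out $\KK\id$ does not affect the character) are all correct.

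However, the bookkeeping you flag as the danger point has in fact gone wrong. For $M=(W_i,g_\alpha)\in M(Q,d)^\perp$ the correct substitution is $\psi_i=\phi_ig_i$, which yields $c_{g\cdot x,M}((\phi_i))=\bigl((\psi_{t\alpha}x_\alpha-g_\alpha\psi_{s\alpha})g_{s\alpha}^{-1}\bigr)_\alpha$. Hence the vertex block $\Hom(\KK^{d_i},W_i)$ contributes $(\det g_i)^{+m_i}$ (not $(\det g_i)^{-m_i}$), and the arrow block contributes $(\det g_{s\alpha})^{-m_{t\alpha}}$ --- it cannot contribute a power of $\det g_{t\alpha}$, since the target $W_{t\alpha}$ belongs to the fixed module $M$ on which $g$ does not act. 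Collecting exponents of $\det g_j$ gives $m_j-\sum_{\alpha:\,s\alpha=j}m_{t\alpha}=(Em)_j$, whereas the sum you actually wrote, $m_j-\sum_{\alpha:\,t\alpha=j}m_{s\alpha}$, equals $(E^tm)_j$, which is not $(Em)_j$ in general. The same $s\leftrightarrow t$ swap occurs in your second computation: $-m_j+\sum_{\alpha:\,s\alpha=j}m_{t\alpha}$ equals $-(Em)_j$, not the asserted $-(E^tm)_j$; with the correct substitution $\psi_i=g_i^{-1}\phi_i$ one gets vertex factors $(\det g_i)^{-m_i}$, arrow factors $(\det g_{t\alpha})^{+m_{s\alpha}}$, and exponent $-m_j+\sum_{\alpha:\,t\alpha=j}m_{s\alpha}=-(E^tm)_j$. (In effect you have interchanged the block factors of the two cases and then miscollected them.) For $\chi_d$ the clean statement is that the substitution $\psi_i=g_i^{-1}\phi_ig_i$ is a conjugation on each $\End(\KK^{d_i})$, so the source side has determinant $1$ and preserves $\KK\id$, and the whole character comes from the arrow side: $\sum_{\alpha:\,t\alpha=j}d_{s\alpha}-\sum_{\alpha:\,s\alpha=j}d_{t\alpha}=((E-E^t)d)_j$; your $\pm d_j$ cancellation is a correct but obscured version of this. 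In short: right method, correct final formulas, but the intermediate identities as written are false and need the corrections above before the argument is a proof.
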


We can combine this with Corollary~\ref{36} to obtain a formula for the degree of a polynomial relative invariant.

\begin{prp}\label{38}
In the situation of Corollary~\ref{36}, 
\[
\deg(f_{d,M})=\sum_{k=1}^mk\sum_{j\in Q_0^k}d_j\left(m_j-\sum_{j\to i\in Q_1\cap(Q_0^k\times Q_0^{k+1})}m_i\right)
\]
for $M\in M(Q,d)^\perp$ with $m=\ddim(M)$, where we set $Q_0^{l+1}=\emptyset$.
There is a similar formula for $\deg(f_{M,d})$ if $M\in{^\perp M(Q,d)}$.
\end{prp}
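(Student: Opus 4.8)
The plan is to start from the two ingredients already available: the formula $\chi_{d,M}=Em$ from Lemma~\ref{20}, and the fact (from Lemma~\ref{85} and \eqref{91}, or directly from \eqref{6}) that the degree of a relative invariant equals $d\chi$ evaluated on the identity, i.e.\ $\deg(f_{d,M})=\langle d,\chi_{d,M}\rangle_{\mathrm{std}}=\langle d, Em\rangle$ where $\langle-,-\rangle_{\mathrm{std}}$ is the standard pairing on $\ZZ^{Q_0}$ (each character $\chi\in\ZZ^{Q_0}$ acts on $\GL(Q,d)$ by $\prod_i\det(g_i)^{\chi_i}$, so its differential at the identity sends $\id$ to $\sum_i d_i\chi_i$). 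First I would record this, so that the whole problem reduces to expanding $\sum_{i\in Q_0}d_i(Em)_i$.

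Next I would unwind the Euler matrix. Since $E=I-A$ with $A_{ij}=\#\{\alpha\colon s\alpha=i,\,t\alpha=j\}$, we get $(Em)_i=m_i-\sum_{i\to j\in Q_1}m_j$, hence
\[
\deg(f_{d,M})=\sum_{i\in Q_0}d_i\Bigl(m_i-\sum_{i\to j\in Q_1}m_j\Bigr).
\]
Now I bring in Corollary~\ref{36}: the layered partition $Q_0=\bigcup_{k=0}^m Q_0^k$ with $Q_1\subset\bigcup_k Q_0^{k-1}\times Q_0^k$ means every arrow $i\to j$ with $i\in Q_0^k$ has $j\in Q_0^{k+1}$, so the inner sum $\sum_{i\to j}m_j$ is exactly $\sum_{i\to j\in Q_1\cap(Q_0^k\times Q_0^{k+1})}m_j$ when $i\in Q_0^k$. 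Substituting and regrouping the outer sum by layers gives
\[
\deg(f_{d,M})=\sum_{k=0}^m\sum_{j\in Q_0^k}d_j\Bigl(m_j-\sum_{j\to i\in Q_1\cap(Q_0^k\times Q_0^{k+1})}m_i\Bigr),
\]
with the convention $Q_0^{m+1}=\emptyset$. (I would reconcile the index name: the statement writes $Q_0^{l+1}=\emptyset$ and sums $k$ from $1$, so I would check the $k=0$ layer, consisting of sources that receive no arrows, contributes the same whether indexed at $0$ or absorbed; more likely the intended normalization in the paper's convention simply relabels so the sum genuinely runs $k=1,\dots,m$, and I would match that.)

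The only real subtlety — and the step I expect to need the most care — is the appearance of the multiplicative weight $k$ in front of the $k$-th layer in the claimed formula, since the naive computation above has coefficient $1$ on every layer. This must come from a second normalization that the paper has in mind for the ``canonical'' relative invariants attached to the simple objects of the orthogonal category in the layered picture: one rescales the layer-$k$ piece of the character so that the telescoping of the layered sums produces the factor $k$. Concretely, I would argue that writing the bracketed quantity $b_k:=\sum_{j\in Q_0^k}d_j(m_j-\sum_{j\to i}m_i)$ and noting that $\sum_{k\ge 0}b_k=\langle d,Em\rangle$ is the total degree, the refinement assigning weight $k$ to $b_k$ is precisely the degree of the component relative invariant indexed according to the filtration depth, because each such component only sees arrows leaving layer $k$; summing the weighted contributions reconstructs $\deg(f_{d,M})$ after accounting for the telescoping across the $d_j m_j$ terms. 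I would make this rigorous by identifying, via Schofield's basis $f_{M_1,d},\dots,f_{M_m,d}$ from \S\ref{110} (Lemma~\ref{20}), the simple object $M_k$ supported on layers $\ge k$ and checking $\chi_{M_k,d}=-E^t(\text{indicator-type vector})$, then summing. The analogous formula for $\deg(f_{M,d})$ with $M\in{}^\perp M(Q,d)$ follows verbatim with $E$ replaced by $E^t$ and the roles of source/target (hence the orientation of the layering) reversed, which swaps $j\to i\in Q_0^k\times Q_0^{k+1}$ for $i\to j\in Q_0^{k+1}\times Q_0^k$.
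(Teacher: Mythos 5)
There is a genuine gap, and it is at the very first step. You assert that $\deg(f_{d,M})=\langle d,\chi_{d,M}\rangle=\langle d,Em\rangle$, i.e.\ that the degree is $d\chi_{d,M}$ evaluated on the element $(\id_{d_i})_{i\in Q_0}\in\gl(Q,d)$. But that element lies in the kernel of the infinitesimal action on $\Rep(Q,d)$: it is the Lie algebra of the central $\KK^*\id$ that is divided out to form $\PGL(Q,d)$, and indeed $c_M((\id)_i)=(\id\cdot x_\alpha-x_\alpha\cdot\id)_\alpha=0$. So it does \emph{not} map to the Euler vector field of $\Rep(Q,d)$, and $\langle d,\chi_{d,M}\rangle$ is not the degree --- in fact it vanishes: $\langle d,Em\rangle=\langle d,m\rangle_Q=\dim\Hom_Q(M(Q,d),M)-\dim\Ext_Q(M(Q,d),M)=0$ precisely because $M\in M(Q,d)^\perp$. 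Your ``total degree'' $\sum_k b_k$ is therefore $0$, which already shows the layer-by-layer bookkeeping with coefficient $1$ cannot be right. The closing paragraph, where you try to recover the factor $k$ from a ``second normalization'' and a telescoping over Schofield's basis, does not repair this: the weight $k$ has nothing to do with how the component invariants are normalized.

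The correct mechanism (and the paper's actual argument) is to identify which group element realizes scalar multiplication by $\lambda$ on $\Rep(Q,d)$. Because every arrow goes from $Q_0^{k-1}$ to $Q_0^{k}$ (Corollary~\ref{36}), the element $[\lambda]\in\PGL(Q,d)$ with block $\lambda^k\id$ on each $\GL_{d_j}(\KK)$, $j\in Q_0^k$, satisfies $g_{t\alpha}x_\alpha g_{s\alpha}^{-1}=\lambda^{k}x_\alpha\lambda^{-(k-1)}=\lambda x_\alpha$, so $\chi_{d,M}([\lambda])=\lambda^{\deg(f_{d,M})}$. Since $\det(\lambda^k\id_{d_j})=\lambda^{kd_j}$, evaluating the character $\chi_{d,M}=Em\in\ZZ^{Q_0}$ on $[\lambda]$ gives $\lambda$ raised to $\sum_k k\sum_{j\in Q_0^k}d_j(Em)_j$, which is the stated formula; your expansion $(Em)_j=m_j-\sum_{j\to i}m_i$ and the restriction of the inner sum to $Q_1\cap(Q_0^k\times Q_0^{k+1})$ are fine and are the only parts of your computation that survive. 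Equivalently, at the Lie algebra level the Euler vector field is $\omega_0(A)$ for $A=(k\,\id_{d_j})_{j\in Q_0^k}$, not for $(\id_{d_j})_j$, and $d\chi_{d,M}(A)=\sum_k k\sum_{j\in Q_0^k}d_j(Em)_j$.
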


\begin{proof}
As $f_{d,M}(\lambda x)=\lambda^{\deg(f_{d,M})}f(x)$, we have $\chi_{d,M}([\lambda])=\lambda^{\deg(f_{d,M})}$ where $[\lambda]\in\PGL(Q,d)=G$ is defined by $\lambda^k\id\in\GL_{d_i}(\KK)$ for $i\in Q_0^k$, using the notation from Corollary~\ref{36}.
By Lemma~\ref{20} and Corollary~\ref{36},
\[
\chi_{d,M}=Em=m-Am=\left(m_j-\sum_{j\to i\in Q_1\cap(Q_0^k\times Q_0^{k+1})}m_i\right)_{k=1,\dots,l,\ j\in Q_0^k}.
\]
As $\det(\lambda^k\id)=\lambda^{kd_j}$ for $\lambda^k\id\in\GL_{d_j}(\KK)$, each vertex $j\in Q_0^k$ contributes a summand $kd_j\left(m_j-\sum_{j\to i\in Q_0^k\times Q_0^{k+1}}m_i\right)$ to $\deg(f_{d,M})$.
The claim follows.
\end{proof}

\subsection{Euler homogeneity}

Combined with Propositions~\ref{18} and \ref{10}, Lemma~\ref{20} gives the following characterization of Euler homogeneity for quiver linear free divisors.

\begin{lem}\label{29}
Let $(Q,d)$ define a linear free divisor $D$ and fix a $Q$-representation $M\in D$.
Then $D$ is Euler homogeneous at $M$ if and only if $M$ decomposes as $M=M_1\oplus M_2$ such that $\langle M_1,M_2\rangle_Q\ne\langle M_2,M_1\rangle_Q$.
In this case, $M_1$ and $M_2$ can be chosen to have no isomorphic indecomposable summands.
\end{lem}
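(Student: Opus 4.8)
The plan is to combine the abstract criterion for Euler homogeneity of a linear free divisor from Proposition~\ref{18} with the explicit description of the group $G=\PGL(Q,d)$ and its subgroup $H=\ker\chi_d$ coming from Proposition~\ref{10} and Lemma~\ref{20}. Recall that $\fg=\KK\id\times\fh$ and that, by Proposition~\ref{18}, $D$ is Euler homogeneous at $M$ exactly when $\fg_M\not\subseteq\fh$, equivalently when the isotropy Lie algebra $\fg_M$ (which under the identification of \S\ref{110} is $\End_Q(M)$ modulo the diagonal $\KK\id$, or rather its image in $\fg$) is not entirely contained in $\ker d\chi_d$. So the first step is to translate the condition ``$\End_Q(M)\not\subseteq\ker d\chi_d$'' into representation-theoretic terms.

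Next I would use the decomposition of $M$ into indecomposables, $M=\bigoplus_i r_iM_i$, and Theorem~\ref{13}, which gives $\End_Q(M)=B\oplus I$ with $B\cong\prod_i\Mat_{r_i\times r_i}(\KK)$ and $I$ nilpotent. A character is trivial on any nilpotent ideal and on the semisimple parts $\SL_{r_i}$, so the restriction of $d\chi_d$ to $\End_Q(M)$ is determined by its values on the centers $\KK\id_{r_iM_i}$ of the blocks, i.e.\ on the ``coordinate'' elements that act by scalar $\lambda$ on the $M_i$-isotypic component and by $0$ elsewhere. Feeding $\ddim(r_iM_i)=r_i\ddim(M_i)$ through the formula $d\chi_d=(E-E^t)d$ from Lemma~\ref{20} (evaluated against the corresponding one-parameter subgroup, exactly as in the proof of Proposition~\ref{38}) should show that the value of $d\chi_d$ on the center of the $M_i$-block is a nonzero multiple of $\langle M_i,M/M_i\rangle_Q-\langle M/M_i,M_i\rangle_Q$, or, writing $M=M_i^{\oplus r_i}\oplus(\text{rest})$ and grouping indecomposables arbitrarily, that $d\chi_d$ evaluated on the element scaling a direct summand $M_1$ of a decomposition $M=M_1\oplus M_2$ equals (a nonzero multiple of) $\langle M_1,M_2\rangle_Q-\langle M_2,M_1\rangle_Q$ — note the $\langle M_i,M_i\rangle_Q$ contributions cancel since the diagonal $\KK\id$ is quotiented out. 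Hence $\End_Q(M)\not\subseteq\ker d\chi_d$ iff for some (equivalently, some coarsest) decomposition $M=M_1\oplus M_2$ one has $\langle M_1,M_2\rangle_Q\ne\langle M_2,M_1\rangle_Q$.

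The remaining point is the refinement that $M_1$ and $M_2$ can be taken with no common indecomposable summand. For this, start from an arbitrary decomposition $M=M_1\oplus M_2$ witnessing $\langle M_1,M_2\rangle_Q\ne\langle M_2,M_1\rangle_Q$ and pass to the decomposition $M=\bigoplus_i r_iM_i$ into isotypic blocks; since the skew-symmetric pairing $(X,Y)\mapsto\langle X,Y\rangle_Q-\langle Y,X\rangle_Q$ is bilinear and vanishes on $(r_iM_i,r_iM_i)$, if it were zero between every pair of distinct blocks it would vanish between any two complementary sub-sums, contradicting the choice of $M_1,M_2$. So there exist indices $i\ne j$ with $\langle r_iM_i,r_jM_j\rangle_Q\ne\langle r_jM_j,r_iM_i\rangle_Q$; grouping the blocks into two parts accordingly — one part containing the $M_i$-block, the other the $M_j$-block, and each remaining block placed on either side so as not to kill the inequality (possible because the total pairing is a sum of the pairwise ones and at least one arrangement must keep it nonzero) — yields the desired $M=M_1\oplus M_2$ with disjoint indecomposable supports.

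The main obstacle I anticipate is the bookkeeping in the second step: correctly identifying the image of $\End_Q(M)$ inside $\fg=\KK\id\times\fh$ after quotienting by the global diagonal $\KK\id$, and verifying that the pairing $d\chi_d$ produces precisely $\langle M_1,M_2\rangle_Q-\langle M_2,M_1\rangle_Q$ (up to a nonzero scalar) rather than some other combination — this requires carefully matching the one-parameter subgroup used in the proof of Proposition~\ref{38} with the block-center generators, and using $q_Q(d)=1$ together with $d$ being a real Schur root (Remark~\ref{17}) only insofar as it guarantees $\fg_M\cap\fh$ has the expected codimension behaviour. Once that identification is pinned down, the rest is routine linear algebra over the Euler form.
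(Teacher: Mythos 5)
Your main argument is the one the paper uses: Proposition~\ref{18} reduces Euler homogeneity at $M$ to non-triviality of $\chi_d$ (resp.\ $d\chi_d$) on $\Aut_Q(M)$ (resp.\ $\End_Q(M)$); by Theorem~\ref{13}/Proposition~\ref{10} this is detected on the centers of the isotypic blocks, since characters kill the unipotent/nilpotent part and the $\SL_{r_i}$-factors; and Lemma~\ref{20} turns the value on the element scaling a summand $M_1$ into $\langle M_1,M_2\rangle_Q-\langle M_2,M_1\rangle_Q$, the diagonal terms cancelling because $\langle m,Em\rangle=\langle m,E^tm\rangle$ (not, as you say, because $\KK\id$ is quotiented out --- a harmless slip). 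That is exactly the paper's computation.

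The one genuine weak point is your final combinatorial step for the refinement. You infer from the existence of indices $i\ne j$ with $\{M_i,M_j\}:=\langle M_i,M_j\rangle_Q-\langle M_j,M_i\rangle_Q\ne 0$ that some grouping of the isotypic blocks into two parts keeps the total pairing nonzero, ``because at least one arrangement must work''. That inference is false in general: with three blocks of multiplicity $1$ and $\{M_1,M_2\}=\{M_2,M_3\}=1$, $\{M_1,M_3\}=-1$, every two-part grouping sums to zero although individual pairings do not. What saves you is the stronger hypothesis you actually have, namely that some (possibly multiplicity-splitting) decomposition witnesses the inequality: writing $a_{ij}=\{M_i,M_j\}$ and $r=(r_i)$ for the multiplicities, skew-symmetry gives that \emph{all} two-part groupings of full blocks vanish iff all the singleton values $\sum_j r_j a_{ij}$ vanish iff \emph{every} decomposition $d=m+n$ pairs to zero (since $\sum_{i,j}r_i'(r_j-r_j')a_{ij}=\sum_i r_i'(Ar)_i-\,{r'}^tAr'$ and ${r'}^tAr'=0$). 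Hence a witnessing decomposition already forces some single isotypic block against its complement to witness the inequality, which is the ``no common summands'' refinement. The paper sidesteps this entirely by working with the injection $\X^*(\Aut_Q(M))\into\X^*(T)$, $T\cong\GG_m^s$ the torus of block centers: non-triviality of $\chi_d$ on $\Aut_Q(M)$ is by construction equivalent to $\langle\ddim(r_iM_i),\chi_d\rangle\ne0$ for some $i$, so the refined decomposition comes for free. Either repair your last step with the skew-symmetry identity above, or phrase it through the character group as the paper does.
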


\begin{proof}
By Proposition~\ref{10}, the isotropy group of $M$ with respect to $\GL(Q,d)$ is $\Aut_Q(M)\cong U\rtimes\prod_{i=1}^s\GL_{r_i}(\KK)$ where the $r_i$ are the multiplicities of the indecomposable summands of $M$ and $U$ is a closed normal unipotent subgroup.
Then also the image of $U$ under each homomorphism $\GL(Q,d)\to\GL_{d_i}(\KK)$, $i\in Q_0$, is unipotent and hence lies in the kernel of $\det$ on $\GL_{d_i}(\KK)$.
Thus $U$ lies in the kernel of any character of $\GL(Q,d)$.
Whether a character is trivial on the factor $\prod_{i=1}^s\GL_{r_i}(\KK)$ of $\Aut_Q(M)$ can be checked on the torus of diagonal matrices $T=\GG_m^s$.
Indeed, $\PGL_{r_i}(\KK)$ is connected semisimple and has no non-trivial characters. 
So restriction defines an injection
\begin{equation}\label{99}
\X^*(\Aut_Q(M))\into\X^*(T),\quad\chi\mapsto\chi_{\vert T}.
\end{equation}
For $i=1,\dots,s$, let $N_i=r_iM_i$ be the product of isomorphic indecomposable summands of $M$, as in Theorem \ref{13}, and let $m_i=\ddim(N_i)\in\NN^{Q_0}$ be the corresponding dimension vector.
An examination of the proof of Proposition~\ref{10} in \cite[Prop.~2.2.1]{Bri08} shows that the $i$th factor of $T$ corresponds to the subgroup $\KK^*\id_{N_i}\subset\GL(Q,d)$.
So the restriction of a character $\chi\in\ZZ^{Q_0}=\X^*(\GL(Q,d))$ to $T$ equals
\begin{equation}\label{100}
\chi_{\vert T}=\left(\langle m_i,e\rangle\right)_{i=1,\dots,s}\in\ZZ^s=\X^*(T).
\end{equation}
Now by Proposition~\ref{18}, Euler homogeneity of $D$ at $M$ is equivalent to $\chi_d$ being non-trivial on $\Aut_Q(M)$, or equivalently by \eqref{99} on $T$.
By \eqref{100}, this means that $\langle m_i,\chi_d\rangle\ne0$ for some $i=1,\dots,s$.
For a fixed $i\in\{1,\dots,s\}$, set $m=m_i$ and $n=d-m_i$. 
Then, using Lemma~\ref{20},
\begin{align*}
\langle m,\chi_d\rangle&=\langle m,(E-E^t)d\rangle
=\langle m,(E-E^t)(m+n)\rangle\\
&=\langle m,Em\rangle-\langle m,E^tm\rangle+\langle m,En\rangle-\langle m,E^tn\rangle
=\langle m,n\rangle_Q-\langle n,m\rangle_Q.
\end{align*}
So finally Euler homogeneity of $D$ at $M$ is equivalent to 
\[
\langle m,n\rangle_Q\ne\langle n,m\rangle_Q
\]
for some splitting $d=m+n$ with respect to non-isomorphic indecomposable summands of $M$.
\end{proof}

\begin{rmk}
The ``if'' part of Lemma~\ref{29} also holds in the situation of Remark~\ref{35}.
However, it is not clear that with $D$ also its reduction $D_\red$ is Euler homogeneous at $M$.
While the module $\Der(-\log D)$ does not change if $D$ is replaced by $D_\red$, this is not clear for the property of being an Euler vector field.
\end{rmk}

Surprisingly, a different approach proves the following

\begin{thm}\label{46}
All quiver linear free divisors are strongly Euler homogeneous.
\end{thm}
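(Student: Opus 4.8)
The plan is to reduce the statement, via Lemma~\ref{29}, to a purely combinatorial claim about quivers: for every $M\in D$ one must produce a decomposition $M=M_1\oplus M_2$ into non-isomorphic parts with $\langle M_1,M_2\rangle_Q\ne\langle M_2,M_1\rangle_Q$. Since $M$ lies in $D$ it is decomposable (by Remark~\ref{17} the open orbit is the unique orbit of indecomposables), so write $M=\bigoplus_{i=1}^s N_i$ where $N_i=r_iM_i$ groups the isomorphic indecomposables as in Theorem~\ref{13}; it suffices to find one $N_i$ with $\langle m_i, d-m_i\rangle_Q\ne\langle d-m_i, m_i\rangle_Q$, i.e. $\langle m_i,\chi_d\rangle\ne 0$ where $\chi_d=(E-E^t)d$ by Lemma~\ref{20}. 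Equivalently, if every summand satisfied $\langle m_i,\chi_d\rangle=0$ then $\chi_d$ would be trivial on all of $\Aut_Q(M)$, which by Proposition~\ref{18} contradicts $M\in D$ only if $D$ is \emph{not} Euler homogeneous there — so the content is exactly to rule this out.

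The key idea is that $\chi_d=(E-E^t)d$ is the \emph{skew-symmetrization} of the Euler form, and on a tree this has a transparent combinatorial meaning. By Theorem~\ref{44}, $Q$ is a tree, so $A$ (the adjacency-with-orientation matrix) has at most one arrow between any two vertices and no loops; thus $(E-E^t)_{ij}=A_{ji}-A_{ij}\in\{-1,0,1\}$, being $-1$ if there is an arrow $i\to j$, $+1$ if there is an arrow $j\to i$, and $0$ otherwise. Hence $\langle m_i,\chi_d\rangle_{\text{(standard pairing)}}=\sum_{\alpha:\,u\to v}\big((m_i)_u d_v-(m_i)_v d_u\big)$... more usefully, $\langle m_i,\chi_d\rangle = \langle m_i,n\rangle_Q-\langle n,m_i\rangle_Q$ with $n=d-m_i$, and on a tree this difference counts arrows between the supports of $m_i$ and $n$ weighted by the relevant coordinates. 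I would argue: pick $N_i$ to be an indecomposable summand supported on a vertex $v$ of smallest "distance to a leaf" in the subquiver spanned by the support of $M$'s decomposition, or more simply pick the summand realizing the extremal behaviour along a leaf edge of the tree. Near a leaf the skew form cannot cancel, because the leaf vertex $\ell$ contributes a single term $\pm((m_i)_\ell n_{\ell'}-(m_i)_{\ell'}n_\ell)$ from the unique incident arrow $\alpha$, and one can choose the summand so that this term is non-zero and no other term offsets it.

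The main obstacle is precisely arranging the non-cancellation: a priori $\langle M_1,M_2\rangle_Q-\langle M_2,M_1\rangle_Q$ is a signed sum over all arrows joining the supports, and it could vanish by coincidence for every choice of $M_1$ among the $N_i$. To overcome this I expect to need a genuinely different, more structural input than the leaf argument — which is likely why the authors flag that "a different approach" is used. The natural candidate is to use the rigidity/degeneration structure: by Proposition~\ref{73}, minimal degenerations split as $N_1\oplus N_2$ with $\Ext_Q(N_2,N_1)=\KK$ and $\Ext_Q(N_1,N_2)=0$, so there $\langle N_1,N_2\rangle_Q-\langle N_2,N_1\rangle_Q=1\ne0$; then one propagates Euler homogeneity from minimal degenerations to all of $D$ by a semicontinuity or orbit-closure argument (every point of $D$ has a minimal degeneration in the closure of its orbit, and Euler homogeneity is, for a fixed linear free divisor, governed by the combinatorial condition on $\ddim$ which only gets "easier" on smaller strata). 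So the real plan is: (i) verify the condition of Lemma~\ref{29} on minimal degenerations via Proposition~\ref{73}; (ii) show that if $M'$ degenerates to $M$ then the existence of a good splitting of $M$ forces one for $M'$ — concretely, a good splitting of $\ddim(M)$ as $d=m+n$ with $\langle m,n\rangle_Q\ne\langle n,m\rangle_Q$ can be \emph{lifted} to a splitting of $M'$ into subrepresentations of the same dimension vectors, which exists because along a degeneration the indecomposable summands of $M$ refine those of $M'$; and (iii) conclude since every orbit in $D$ contains a minimal degeneration in its closure. Step (ii), the lifting of the numerical splitting to an actual direct-sum decomposition of the larger representation compatibly with the degeneration, is the step I expect to require the most care.
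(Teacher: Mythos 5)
Your reduction to Lemma~\ref{29} and your verification of the Euler condition at minimal degenerations via Proposition~\ref{73} (there $\langle N_1,N_2\rangle_Q-\langle N_2,N_1\rangle_Q=(0-0)-(0-1)=1\neq 0$) are both correct, but the propagation steps (ii)--(iii) contain a fatal error. Step (iii) has the degeneration order backwards: a minimal degeneration $N$ is by definition a point whose orbit is open and dense in an irreducible component $D_i$ of $D$, so its orbit closure is all of $D_i$ and has dimension $\dim V-1$; consequently the orbit closure of an arbitrary $M\in D$ can contain the orbit of a minimal degeneration only if $M$ is itself one. The true containment is the reverse: every $M\in D$ lies in the orbit closure of some minimal degeneration. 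With the direction corrected, step (ii) would have to transport the splitting from the \emph{generic} point $N=N_1\oplus N_2$ of a component of $D$ down to an arbitrary $M$ in its orbit closure, and the mechanism you propose does not exist: along a degeneration the indecomposable summands do not refine in any way that controls dimension vectors, so the summands $M_i$ of $M$ can have dimension vectors unrelated to $\ddim N_1$ and $\ddim N_2$ (only their total $d$ is fixed). Since $\langle d,\chi_d\rangle=0$, the individual pairings $\langle\ddim M_i,\chi_d\rangle$ could in principle all vanish even though $\langle\ddim N_1,\chi_d\rangle\neq0$; ruling this out is precisely the whole difficulty, and nothing in your outline addresses it. (Your first, leaf-based idea suffers from the cancellation problem you yourself identify, so it cannot serve as a fallback.)

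The paper's proof avoids degenerations and the numerical criterion of Lemma~\ref{29} altogether. Given $M\in D$, it takes \emph{any} two-block splitting $M=M_1\oplus M_2$ (possible by Corollary~\ref{23}) and observes that if $\Ext_Q(M_1,M_2)=0=\Ext_Q(M_2,M_1)$, then by \eqref{14} and \eqref{74} every representation in an affine normal slice to the orbit of $M$ at $M$ would again be a direct sum; since the orbit of $M$ together with this slice sweeps out a neighbourhood of $M$, this contradicts the indecomposability of the generic representation. Hence, after reordering, $\Ext_Q(M_2,M_1)\neq0$, and the one-parameter subgroup of the isotropy group acting with weights $(1,0)$ on the two summands acts nontrivially on the normal representation and produces an Euler vector field at $M$. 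If you want to repair your argument, it is this kind of pointwise, slice-theoretic reasoning that must replace steps (ii)--(iii).
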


\begin{proof}
Let $(Q,d)$ define a linear free divisor $D$ and fix $M=(V,f)\in D$.
By Corollary~\ref{23}.\eqref{23a}, $M$ decomposes as $M=M_1\oplus M_2=(V_1\oplus V_2,f_1\oplus f_2)$.
Then $\Ext_Q(M)=\bigoplus_{i,j=1}^2\Ext_Q(M_i,M_j)$ is by \eqref{74} the normal space at $M$ to the $\GL(Q,d)$-orbit of $M$ in $\Rep(Q,d)$.
In the sequence \eqref{14}, it lifts to a subspace $W=\bigoplus_{i,j=1}^2W_{i,j}\subset\bigoplus_{i,j=1}^2\bigoplus_{\alpha\in Q_1}\Hom_Q(V_{i,\alpha},V_{j,\alpha}) $, such that $\{M_{\theta}\}_{\theta\in W}$, $M_{\theta}:=(V,f+\theta)$, is an affine space in $\Rep (Q,d)$ normal to this orbit.
The representation $M_{\theta}$ splits if $\theta _{1,2}=\theta _{2,1}=0$. 
As the generic representation is indecomposable, the generic $M_{\theta}$ can not split, and  we must have $\Ext_Q(M_2,M_1)\ne0$ after reordering the $M_1,M_2$.
Consider the $1$-PSG $\lambda\colon\GG_m\to\GL(Q,d)_M$ that acts by the characters $(1,0)\in\ZZ^2$ on the summands.
As it is non-zero on the weights of the normal representation, it induces an Euler vector field at $M$.
At a smooth point of $D$, this is just the vector field $\delta$ of the proof of Proposition~\ref{82}.
\end{proof}

Combining Lemma~\ref{29} and Theorem~\ref{46} we find

\begin{thm}
Let $(Q,d)$ define a linear free divisor $D$.
Then any $M\in D$ decomposes as $M=M_1\oplus M_2$ such that $\langle M_1,M_2\rangle_Q\ne\langle M_2,M_1\rangle_Q$.
In this case, $M_1$ and $M_2$ can be chosen to have no isomorphic indecomposable summands.
\end{thm}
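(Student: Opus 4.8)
The plan is to combine the two previous results directly. By Theorem~\ref{46}, every quiver linear free divisor is strongly Euler homogeneous, so in particular $D$ is Euler homogeneous at the given representation $M \in D$. By Lemma~\ref{29}, Euler homogeneity of $D$ at $M$ is equivalent to the existence of a splitting $M = M_1 \oplus M_2$ with $\langle M_1, M_2 \rangle_Q \neq \langle M_2, M_1 \rangle_Q$, and moreover the decomposition can be taken so that $M_1$ and $M_2$ share no isomorphic indecomposable summands. So the statement is an immediate corollary of the conjunction of these two facts, and the ``proof'' is essentially a one-line citation of Theorem~\ref{46} followed by Lemma~\ref{29}.

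The only point requiring a word of care is that Theorem~\ref{46} and Lemma~\ref{29} are proved by genuinely different routes — Lemma~\ref{29} goes through the character computation on the torus of $\Aut_Q(M)$ and the identity $\langle m, \chi_d \rangle = \langle m,n\rangle_Q - \langle n,m\rangle_Q$, whereas Theorem~\ref{46} argues geometrically via the normal space $\Ext_Q(M) = \bigoplus_{i,j} \Ext_Q(M_i,M_j)$ and a $1$-PSG acting by $(1,0)$ — but both conclude the \emph{same} equivalence with Euler homogeneity at $M$ (via Proposition~\ref{18}), so there is no compatibility issue to check. First I would state that $D$ is Euler homogeneous at $M$ by Theorem~\ref{46}, then invoke the characterization in Lemma~\ref{29} to produce the splitting with the required inequality of Euler forms and the required property on indecomposable summands.

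There is no real obstacle here; the content of the theorem lies entirely in the two results it combines, and the statement is recorded separately only to package the conclusion of Theorem~\ref{46} in the representation-theoretic language of Lemma~\ref{29}. The proof is therefore: \emph{By Theorem~\ref{46}, $D$ is Euler homogeneous at $M$. By Lemma~\ref{29}, this is equivalent to the asserted decomposition, with $M_1$ and $M_2$ having no isomorphic indecomposable summands in common.}
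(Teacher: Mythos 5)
Your proposal is correct and is exactly the paper's argument: the theorem is stated there with the single line ``Combining Lemma~\ref{29} and Theorem~\ref{46} we find,'' i.e.\ strong Euler homogeneity from Theorem~\ref{46} fed into the characterization of Lemma~\ref{29}. Nothing further is needed.
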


\subsection{Local quasihomogeneity}

Under additional hypotheses, the construction used in the proof of Theorem~\ref{46} can be used to show even weak local quasihomogeneity.

\begin{prp}\label{48}
Let $(Q,d)$ define a linear free divisor.
Then $D$ is weakly quasihomogeneous at $M=(V,f)$ if $M=M_1\oplus M_2$ with $M_1\neq0\neq M_2$ and $\Ext_Q(M_2,M_1)=0$; it is quasihomogeneous if $M=M_1\oplus\cdots\oplus M_k$ with $\Ext_Q(M_i,M_j)=0$ for all $i\le j$.
\end{prp}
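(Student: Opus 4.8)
The plan is to leverage Proposition~\ref{47} and reduce everything to an analysis of the normal representation $\rho_M\colon G_M\to\GL(\Ext_Q(M))$ of the isotropy group $G_M$ at $M$. By Proposition~\ref{47}, it suffices to show that this normal representation is non-negative (respectively positive) under the stated hypotheses. So first I would recall, exactly as in the proof of Theorem~\ref{46}, that the isotropy group $G_M=\GL(Q,d)_M/\KK^*\id$ contains a torus $T$ which, via the identification of $G_M$ with $\Aut_Q(M)/\KK^*\id$ and Theorem~\ref{13}, maps onto $\GG_m^{k}/\KK^*\id$ acting by scalars on the summands $M_1,\dots,M_k$ (more precisely on the isotypic blocks, but in the relevant situation the $M_i$ will have no common indecomposable summand — compare Lemma~\ref{29} — so we may assume they are genuinely distinct; if not we group them). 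Under this torus, $\Ext_Q(M)=\bigoplus_{i,j}\Ext_Q(M_i,M_j)$ is a weight decomposition: the block $\Ext_Q(M_i,M_j)$ carries the character $t\mapsto t_j t_i^{-1}$ (the sign convention matching the one implicit in \eqref{81} and Lemma~\ref{96}, where $G_N$ acts on the extension parameter $\theta\in\Hom(sN_2,tN_1)$ through $uv^{-1}$).

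Next I would set up a one-parameter subgroup. In the two-summand case, choose $\lambda\colon\GG_m\to T\subset G_M$ acting by the character $(1,0)$ on $(M_1,M_2)$, exactly as in the proof of Theorem~\ref{46}. On the block $\Ext_Q(M_1,M_2)$ it acts with weight $-1$, on $\Ext_Q(M_2,M_1)$ with weight $+1$, and on the diagonal blocks $\Ext_Q(M_i,M_i)$ with weight $0$. The hypothesis $\Ext_Q(M_2,M_1)=0$ kills the $+1$ block, so all remaining weights of $\rho_M$ along this $1$-PSG are $0$ or $-1$; hence $0$ is not in the interior of the convex hull of the weights of $\rho_M$, i.e.\ $\rho_M$ is non-negative, and Proposition~\ref{47} gives weak quasihomogeneity. (The fact that not all weights are zero — so that the convex hull has nonempty relevant span — is automatic since $M$ is a genuine degeneration and $\Ext_Q(M)\neq0$, indeed $\Ext_Q(M_1,M_2)\neq0$ because otherwise $M$ would be rigid and hence in the open orbit, contradicting $M\in D$; this also matches the remark after Proposition on weak quasihomogeneity that $A(0)$ must have a positive eigenvalue.)

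For the quasihomogeneous statement, with $M=M_1\oplus\cdots\oplus M_k$ and $\Ext_Q(M_i,M_j)=0$ for all $i\le j$, the surviving $\Ext$ blocks are exactly those with $i>j$, carrying characters $t\mapsto t_jt_i^{-1}$. I want a $1$-PSG $\lambda\colon\GG_m\to T$ that is strictly positive on all these weights, i.e.\ real numbers $a_1,\dots,a_k$ with $a_i-a_j>0$ whenever $i>j$; taking $a_1<a_2<\cdots<a_k$ works, and since $T$ is (a quotient of) $\GG_m^k$ this is realized by an honest cocharacter after clearing denominators and passing to the quotient by the diagonal. Then $\rho_M$ is positive, and Proposition~\ref{47} yields quasihomogeneity at $M$.

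The main obstacle I anticipate is bookkeeping the sign/weight convention and the passage to the $\PGL$-quotient: one must be careful that $\KK^*\id$ is being divided out consistently (which is why only \emph{differences} $a_i-a_j$ matter, and why the argument is insensitive to that quotient), and that the identification of the torus of $G_M$ with scalars on the summands is the one coming from the proof of Proposition~\ref{10}, as was already invoked in Lemma~\ref{29} via \eqref{99} and \eqref{100}. A secondary point to state cleanly is why the normal representation of $G_M$, and not merely of its maximal torus, is non-negative/positive: the unipotent radical $U\subset G_M$ contributes no weights (its image under every character is trivial, again by the argument in Lemma~\ref{29}), and the semisimple part contributes only through its maximal torus, so testing non-negativity/positivity on $T$ suffices. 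Once these identifications are in place the inequalities are immediate, so I expect the write-up to be short.
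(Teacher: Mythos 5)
Your proof is correct and follows essentially the same route as the paper: reduce to Proposition~\ref{47} by exhibiting a one-parameter subgroup of the isotropy torus acting by scalars on the summands, and read off the signs of its weights on the blocks $\Ext_Q(M_i,M_j)$ of the normal representation (the paper uses the weights $(1,0)$, resp.\ $(k-1,\dots,1,0)$, which differ from yours only by an immaterial sign). Your extra care about the $\PGL$-quotient, the weight conventions, and the non-vanishing of $\Ext_Q(M_1,M_2)$ only adds detail the paper leaves implicit.
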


\begin{proof}
In the first case, the $1$-PSG from the proof of Theorem~\ref{46} is non-negative on the weights of the normal representation and the claim follows from Proposition~\ref{47}.
For the second statement, consider the $1$-PSG $\lambda\colon\GG_m\to\GL(Q,d)_M$ that acts by the characters $(k-1,k-2,\dots,1,0)\in\ZZ^k$ on the summands of a decomposition $V=V_1\oplus\cdots\oplus V_k$ corresponding to $M=M_1\oplus\cdots\oplus M_k$.
By hypothesis, it is positive on the weights of the normal representation and the claim follows again from Proposition~\ref{47}.
\end{proof}

The \emph{Auslander--Reiten translate} of a $Q$-representation $M$ is defined by 
\[
\tau M=D\Ext(M,\KK Q),
\]
where $DX=\Hom_\KK(X,\KK)$. 
It satisfies $\Ext_Q(M,N)=\Ext_Q(\tau M,\tau N)$ if $N$ is a non-projective indecomposable $Q$-representation (see \cite[\S6, Lems.~1, 2]{CB92}). 
A representation is \emph{preprojective} if $\tau^kM$ is a non-zero projective $\KK Q$-module for some $k$ (in which case $\tau^{k+1}M=0$). 
In the case of Dynkin quivers, any indecomposable $Q$-representation $M$ is preprojective.  
Set $\nu(M)=k$ for the minimal such $k$ and order the indecomposable summands $M_1,\dots,M_k$ of $M$ so that $\nu(M_1)\ge\cdots\ge\nu(M_k)$.
Consequently,
\begin{equation}\label{112}
\Ext_Q(M_i,M_j)=\Ext_Q\left(\tau^{\nu(M_i)}M_i,\tau^{\nu(M_i)}M_j\right)=0 \text{ whenever } i\ge j.
\end{equation}
Using Proposition~\ref{48}, this shows the following

\begin{thm}
Any Dynkin quiver linear free divisor is locally quasihomogeneous.
\end{thm}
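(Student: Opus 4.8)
The statement is that any Dynkin quiver linear free divisor $D$ is locally quasihomogeneous, i.e. quasihomogeneous at every point $M\in D$. The plan is to reduce this, at each point $M$, to the sufficient condition supplied by Proposition~\ref{48}: $D$ is quasihomogeneous at $M$ provided $M$ admits a decomposition $M=M_1\oplus\cdots\oplus M_k$ into indecomposables with $\Ext_Q(M_i,M_j)=0$ for all $i\le j$. So the whole proof consists of exhibiting, for an \emph{arbitrary} $M\in D$, an ordering of its indecomposable summands with this vanishing property.

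First I would recall that since $Q$ is a Dynkin quiver, every indecomposable $Q$-representation is preprojective: for each indecomposable summand $M_i$ of $M$ there is a minimal $k$ with $\tau^k M_i$ projective, and we write $\nu(M_i)=k$. Then I would order the summands $M_1,\dots,M_k$ so that $\nu(M_1)\ge\nu(M_2)\ge\cdots\ge\nu(M_k)$. The key computation is \eqref{112}: for $i\ge j$ we have $\nu(M_i)\le\nu(M_j)$, so $\tau^{\nu(M_i)}M_j$ is still defined (indeed nonzero, or at worst this forces the Ext to vanish trivially), and applying the Auslander--Reiten translate invariance $\Ext_Q(M,N)=\Ext_Q(\tau M,\tau N)$ for $N$ non-projective indecomposable $\nu(M_i)$ times turns $\Ext_Q(M_i,M_j)$ into $\Ext_Q\big(\tau^{\nu(M_i)}M_i,\tau^{\nu(M_i)}M_j\big)$, whose first argument is projective, hence the Ext vanishes. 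Thus $\Ext_Q(M_i,M_j)=0$ whenever $i\ge j$; equivalently (relabelling, or reading the indices the other way) we get the vanishing for $i\le j$ demanded by Proposition~\ref{48}. Feeding this ordered decomposition into Proposition~\ref{48} yields quasihomogeneity of $D$ at $M$, and since $M\in D$ was arbitrary, $D$ is locally quasihomogeneous.

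The one point requiring a little care — and the place I expect the only real friction — is the edge case in \eqref{112} where $\tau^{\nu(M_i)}M_j$ might be zero or might be the projective itself (when $\nu(M_i)=\nu(M_j)$ and $M_j$ is preprojective of the same depth), so that the hypothesis ``$N$ non-projective indecomposable'' for the AR-translate invariance $\Ext_Q(M,N)=\Ext_Q(\tau M,\tau N)$ is not literally met at the last step. This is handled by observing that in any such terminal case the target of the Ext computation is $\Ext_Q(\text{projective},-)$ or $\Ext_Q(-,0)$, which is $0$ regardless; one simply stops applying $\tau$ one step early on the right and notes that $\tau^{\nu(M_i)}M_i$ being projective already kills the group. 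So the argument is essentially formal once the preprojective ordering is in place, and the substance is entirely contained in \eqref{112} together with Proposition~\ref{48}.
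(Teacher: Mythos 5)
Your proof is correct and follows exactly the paper's own argument: order the indecomposable summands by decreasing $\nu$, use the Auslander--Reiten translate invariance as in \eqref{112} to get the required $\Ext$-vanishing, and conclude via Proposition~\ref{48}. The extra care you take with the edge cases of \eqref{112} (where $\tau^{\nu(M_i)}M_j$ vanishes or the non-projectivity hypothesis fails at the last step) is a sensible elaboration of a point the paper leaves implicit.
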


\begin{rmk}
Feh\'er and Patakfalvi~\cite[\S5.2]{FP09} used the same argument to show more generally that all points in the representation space of a Dynkin quiver have positive normal representations. 
\end{rmk}

We can generalize the preceding arguments as follows.
Like $\tau$, its inverse functor $\tau'$ defined by $\tau'M=\Ext_Q(DM,\KK Q)$,  leaves $\Ext_Q$ invariant, and by close analogy with $\tau$, defines the class of \emph{preinjective} $Q$-representations.
A $Q$-representation is called \emph{regular} if it has no preprojective or preinjective indecomposable summands.

Assume that $M$ is not regular. 
If it has a preprojective summand $M_1$, we may assume $\nu(M_1)$ is minimal, and take a complementary summand $M'$. 
Then using \eqref{112} we get $\Ext_Q(M_1,M')=0$. 
Similarly if $M$ has a preinjective summand $M_2$, we may assume $\Ext_Q(M',M_2)=0$, with $M'$ a complement of $M_2$.
Then Proposition~\ref{48} implies the following

\begin{thm}\label{34}
Let $(Q,d)$ define a linear free divisor $D$.
Then $D$ is weakly quasihomogeneous at any non-regular point.
\end{thm}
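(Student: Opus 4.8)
The plan is to reduce the statement to Proposition~\ref{48} by showing that any non-regular representation $M\in\Rep(Q,d)$ admits a two-term direct sum decomposition of the required type, that is, $M=M_1\oplus M_2$ with both summands nonzero and $\Ext_Q(M_2,M_1)=0$. Since $M$ is not regular, it has either a preprojective or a preinjective indecomposable summand; by symmetry (applying the argument to the opposite quiver, or equivalently using $\tau'$ in place of $\tau$) it suffices to treat the preprojective case.

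First I would collect the relevant facts about the Auslander--Reiten translate $\tau$ and preprojective modules: for a non-projective indecomposable $N$ one has $\Ext_Q(M,N)=\Ext_Q(\tau M,\tau N)$, and for a preprojective indecomposable $P$ the invariant $\nu(P)$ is the least $k$ with $\tau^k P$ nonzero projective. The key numerical input, exactly as in \eqref{112}, is that $\Ext_Q(P_i,P_j)=0$ whenever $\nu(P_i)\ge\nu(P_j)$, because applying $\tau^{\nu(P_i)}$ kills $P_i$ (sends it to a projective, whose self-$\Ext$ vanishes) while keeping $\tau^{\nu(P_i)}P_j$ defined. So: among all preprojective indecomposable summands of $M$, pick $M_1$ to be one with $\nu(M_1)$ minimal, and let $M_2$ be the direct sum of all the remaining indecomposable summands. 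If $M_2=0$ then $M=M_1$ is indecomposable, hence in the open orbit, hence a smooth point, where quasihomogeneity is already known; so assume $M_2\ne0$. For every indecomposable summand $N$ of $M_2$ that is preprojective we have $\nu(M_1)\le\nu(N)$, giving $\Ext_Q(M_1,N)=0$. For every indecomposable summand $N$ of $M_2$ that is \emph{not} preprojective, $N$ is non-projective indecomposable, so $\Ext_Q(M_1,N)=\Ext_Q(\tau M_1,\tau N)=\cdots=\Ext_Q(\tau^{\nu(M_1)}M_1,\tau^{\nu(M_1)}N)=0$ since $\tau^{\nu(M_1)}M_1$ is projective. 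In either case $\Ext_Q(M_1,N)=0$, and summing over the indecomposable summands of $M_2$ gives $\Ext_Q(M_1,M_2)=0$. Then Proposition~\ref{48} with the roles $(M_2,M_1)$ in place of $(M_1,M_2)$ — reorder so that the vanishing $\Ext$ is $\Ext_Q(M_2,M_1)=0$ — yields weak quasihomogeneity of $D$ at $M$.

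If instead $M$ has no preprojective summand but a preinjective one, I would run the mirror argument using $\tau'$ and preinjective modules: pick a preinjective indecomposable summand $M_2$ with the appropriate extremal value of the $\tau'$-invariant, let $M_1$ be the complementary summand, and conclude $\Ext_Q(M_2,M_1)=0$ by the analogous cancellation, then again invoke Proposition~\ref{48}. Finally I would note that the case $M\in\Rep(Q,d)\setminus D$ (open orbit) is trivially covered since smooth points of any free divisor given by a homogeneous equation are weakly quasihomogeneous.

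The main obstacle, and the only genuinely non-formal point, is verifying $\Ext_Q(M_1,M_2)=0$ uniformly — in particular handling the indecomposable summands of $M_2$ that are regular or preinjective rather than preprojective. The clean way around this is exactly the observation above: one does not need $M_2$ itself to be preprojective; one only needs that applying $\tau^{\nu(M_1)}$ turns $M_1$ into a projective module, and projectives have no self-extensions and more generally $\Ext_Q(P,-)=0$ for $P$ projective, so $\Ext_Q(M_1,N)=\Ext_Q(\tau^{\nu(M_1)}M_1,\tau^{\nu(M_1)}N)=0$ for \emph{every} non-projective indecomposable $N$, with the $\tau$-shift of $N$ being harmless (it is some representation, possibly zero, and $\Ext$ out of a projective vanishes regardless). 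One should just be slightly careful that $\tau^{\nu(M_1)}N$ makes sense, i.e.\ that $N$ stays non-projective under the intermediate applications of $\tau$; this holds because $M_1$ was chosen with $\nu(M_1)$ minimal among preprojective summands, so no summand of $M_2$ becomes projective before step $\nu(M_1)$, and the stated identity $\Ext_Q(M,N)=\Ext_Q(\tau M,\tau N)$ is valid at each step.
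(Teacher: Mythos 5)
Your proof is correct and takes essentially the same route as the paper's: pick a preprojective indecomposable summand $M_1$ with $\nu(M_1)$ minimal (dually, a preinjective one), use the identity $\Ext_Q(M,N)=\Ext_Q(\tau M,\tau N)$ iterated $\nu(M_1)$ times to get $\Ext_Q(M_1,M')=0$ for the complementary summand $M'$, and feed the resulting splitting into Proposition~\ref{48}. Your explicit check that the vanishing also holds against the regular and preinjective summands of $M'$ (since $\Ext_Q$ out of a projective vanishes regardless of the second argument) is a detail the paper leaves implicit in its appeal to \eqref{112}, not a different argument.
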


\subsection{The tame case}\label{67}

Recall that a quiver $Q$ is called \emph{tame} if the Tits form $q_Q(d)$, defined in \eqref{69}, is positive and non definite.
Its kernel is then the one dimensional module $\ZZ\cdot\delta$ generated by a positive sincere dimension vector $\delta\in\NN^{Q_0}$, which is characterized by the following equivalent properties:
\[
\delta'\in\ZZ\cdot\delta
\Longleftrightarrow q_Q(\delta')=0
\Longleftrightarrow (\delta',d)_Q=\ideal{\delta',d}_Q+\ideal{d,\delta'}_Q=0.
\]

The goal of this section is to prove the following result. 

\begin{thm}\label{50}
Suppose that $(Q,d)$ defines a linear free divisor $D$. 
If $Q$ is tame then $D$ is locally weakly quasihomogeneous.
\end{thm}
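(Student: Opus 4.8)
The plan is to bootstrap from the results already obtained. Theorem~\ref{34} makes $D$ weakly quasihomogeneous at every point of $D$ whose representation is not regular, so what remains is to treat a regular $M\in D$. For this one invokes the classical structure of the regular part of the module category of a tame quiver: it is an abelian, uniserial subcategory that splits as a coproduct $\coprod_{\lambda\in\PP^1}\mathcal{T}_\lambda$ of tubes, with $\Hom_Q$ and $\Ext_Q$ vanishing between objects of distinct tubes, and with all but finitely many $\mathcal{T}_\lambda$ homogeneous of rank $1$, the exceptional ones having ranks $p_\lambda\ge2$. Decompose $M=\bigoplus_\lambda M_\lambda$ accordingly.

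If $M_\lambda\ne0$ for two or more values of $\lambda$, pick one of them, $\lambda_0$, and put $M_1=M_{\lambda_0}$, $M_2=\bigoplus_{\lambda\ne\lambda_0}M_\lambda$; these are non-zero and $\Ext_Q(M_2,M_1)=\bigoplus_{\lambda\ne\lambda_0}\Ext_Q(M_\lambda,M_{\lambda_0})=0$, so Proposition~\ref{48} applies.

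The substantial case is when $M$ lies in a single tube $\mathcal{T}_\lambda$. This tube cannot be homogeneous: otherwise every object of it, in particular $M$, would have dimension vector a non-negative multiple of $\delta$, forcing $d=n\delta$ and $q_Q(d)=n^2q_Q(\delta)=0$, contrary to Theorem~\ref{12}.\eqref{12a}. So $\mathcal{T}_\lambda$ has rank $p\ge2$; let $E_1,\dots,E_p$ be its quasi-simple objects, numbered cyclically with $\tau E_i\cong E_{i-1}$, and let $s_i=\ddim E_i$, so that $\sum_i s_i=\delta$. Since the $s_i$ span a root subsystem of type $\tilde A_{p-1}$, one has $q_Q\bigl(\sum_i a_i s_i\bigr)=\tfrac12\sum_i(a_i-a_{i-1})^2$ for all integers $a_i$ (indices mod $p$); applying this to the quasi-composition multiplicity vector $a=(a_i)$ of $M$, defined by $d=\sum_i a_i s_i$, and using $q_Q(d)=1$ shows that $(a_i)$ is non-constant and takes two consecutive values $m$ and $m+1$ on complementary sub-arcs, the latter of some length $r$ with $1\le r\le p-1$. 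Now the unique indecomposable object of $\mathcal{T}_\lambda$ with quasi-composition vector $a$ has quasi-length $mp+r$ and dimension vector $d$; by Kac's theorem (Remark~\ref{17}) it must be the generic representation $M(Q,d)$, hence a brick, and since a brick in a rank-$p$ tube has quasi-length at most $p$, this forces $m=0$. Therefore $a\in\{0,1\}^{p}$, $d=\sum_{i\in A}s_i$ for a proper non-empty arc $A$ of quasi-simples, and (as $M$ is decomposable by Corollary~\ref{23}.\eqref{23a}) $M$ is a direct sum of $t\ge2$ interval modules whose supports partition $A$ into consecutive sub-arcs. The full subcategory of $\mathcal{T}_\lambda$ formed by the modules with all quasi-composition factors in $A$ is extension-closed, hence equivalent, as an exact category, to the category of representations of the linearly oriented $A_r$-quiver on the vertex set $A$; under this equivalence $M$ corresponds to a representation with all multiplicities equal to $1$. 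We can then repeat the argument used in the Dynkin case, via the identity \eqref{112}, inside this $A_r$-category: ordering the indecomposable summands of $M$ by decreasing $\nu$, the summand $M''$ with the least value of $\nu$ satisfies $\Ext_Q(M'',X)=0$ for every other summand $X$, so with $M'$ the direct sum of those other summands we get $\Ext_Q(M'',M')=0$, and Proposition~\ref{48} (with $M_1=M'$, $M_2=M''$) finishes the proof.

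The crux is this single-tube case, and within it the reduction to $a\in\{0,1\}^p$. The condition $q_Q(d)=1$ by itself only constrains the \emph{shape} of the quasi-composition vector; it is the Schur-root hypothesis --- concretely, that the generic representation is a \emph{brick}, together with the fact that bricks in a rank-$p$ tube have quasi-length $\le p$ --- that forces $a$ to be multiplicity-free and supported on a proper arc, and this is exactly the point at which the tube case collapses to a Dynkin ($A_r$) situation where the mechanism of Theorem~\ref{34} becomes available. The two auxiliary inputs used here --- the characterisation of bricks in a tube, and the identification of the arc-supported wing with representations of a linear $A_r$-quiver together with the agreement of the corresponding $\Ext$-groups --- are routine, but do require the uniserial structure of the tube to be written out explicitly.
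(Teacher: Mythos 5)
Your proposal is correct, and in the decisive single-tube case it takes a genuinely different route from the paper. Both arguments share the same skeleton: reduce to regular $M$ via Theorem~\ref{34}, split off summands lying in distinct tubes (where $\Ext_Q$ vanishes automatically), and then produce a decomposition $M=M_1\oplus M_2$ with $\Ext_Q(M_2,M_1)=0$ so that Proposition~\ref{48} applies. The paper gets this splitting by showing that the directed graph on the indecomposable summands $X_i$ (with an arrow when $\Ext_Q(X_i,X_j)\ne0$) has no directed cycle; this is Lemma~\ref{71}, proved by a direct arc-length computation in the cyclic graph $Q_T$ using only $\dim M<\delta$ together with the arithmetic criterion of Lemma~\ref{51}. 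You instead first determine $d$ exactly: the restriction of $q_Q$ to the span of the quasi-simples of the tube is the $\tilde A_{p-1}$ form, and $q_Q(d)=1$ combined with the brick bound of Proposition~\ref{53} forces the quasi-composition vector to be the indicator function of a proper arc $A$; you then identify the wing over $A$ with representations of a linearly oriented $A_r$-quiver and rerun the preprojective-ordering argument \eqref{112} from the Dynkin case inside that wing. The paper's route is shorter and self-contained given Lemma~\ref{51}; yours yields a sharper structural normal form for $d$ and for the possible decompositions of $M$ (a partition of the arc $A$ into consecutive interval modules), at the price of importing two standard but unproved-here facts — that the regular category, each tube, and each wing are extension-closed (so that $\Ext$ computed in the wing agrees with $\Ext_Q$), and the equivalence of the wing with $\mathrm{rep}(A_r)$. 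Both proofs ultimately rest on the same finiteness phenomenon, namely that the total arc length occupied by $M$ in its tube is strictly less than the period $p$. The only points needing care in your write-up are the transport of the function $\nu$ (defined for preprojectives) through the wing equivalence and the verification of extension-closedness; you flag both, and neither is a genuine gap.
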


For the rest of the section, we assume the hypotheses of Theorem~\ref{50}.
Let $M$ be a point in $D$ and let $M=M_1\oplus\cdots\oplus M_r$, $r\geq 2$, its decomposition into indecomposable $Q$-representations. 
According to Theorem~\ref{34}, the only case for which local weak quasihomogeneity needs to be proved is when all the $M_i$ are regular indecomposable. 
Let us recall some facts about the regular $Q$-representations, which can be found for example in \cite{CB92}:

\begin{asparaitem}

\item The Auslander--Reiten functors $\tau$ and $\tau'$ applied to a regular indecomposable $Q$-representation $X$ satisfy $X\cong\tau\tau'X\cong\tau'\tau X$. 
We define $\tau^iX$ for any  $i\in \ZZ$ by 
\[
\tau^iX=
\begin{cases}
\tau^iX\text{ if }i\geq 0,\\
\tau'^{-i}X\text{ if }i\le 0.
\end{cases}
\]

\item Define the \emph{defect} of an indecomposable $Q$-representation $X$ to be $\defect(X)=\ideal{\delta,\ddim(X)}_Q=-\ideal{\delta,\ddim(X)}_Q$. 
Then, according to \cite[\S 7, Lem.~2]{CB92}, the regular indecomposable $Q$-representations are characterized by
\[
X \text{ is regular }\Longleftrightarrow\defect(X)=0
\]

\item The regular $Q$-representations form a full Abelian subcategory.
We call its simple objects regular-simple.

\item An indecomposable regular $Q$-representation $X$ contains only a finite set of regular $Q$-subrepresentations $X_i$ which are organized in a composition series
\begin{equation}\label{70}
\{0\}=X_0\subset X_1\subset \cdots \subset X_r=X,\quad X_{r-i}/X_{r-i-1}=\tau^iT,
\end{equation}
where $T$ is a uniquely determined regular-simple $Q$-representation.
We call the $\tau^iT$ the regular-simple factors of $X$ and $T$ the top regular-simple factor.

\item Conversely for any regular-simple $Q$-representation $T\ne0$ and any $r\ge1$ there is, up to isomorphism, exactly one indecomposable regular $Q$-representation $X$ with a composition series \eqref{70} having $T$ as its top regular-simple factor.
We denote this module by $X(T,r)$ and set $X(T,0)=0$. 
Then we have exact sequences 
\[
\xymat{0\ar[r]&X(\tau^sT,r-s)\ar[r]&X(T,r)\ar[r]&X(T,s)\ar[r]&0},\quad 1\le s<r.
\]  

\end{asparaitem}

\begin{lem}\label{51}
Let $X(T_1,r_1)$, $X(T_2,r_2)$ be two regular indecomposable $Q$-representations. 
\begin{enumerate}
\item\label{51a} The space of homomorphisms $\Hom_Q(X(T_1,r_1),X(T_2,r_2))$ is non zero if and only if there are integers $a,b\in\NN$ such that $0\le a\le r_1-1$, $0\le b\le r_2-1$, $a+r_2=b+r_1$ and 
\[
\tau ^{r_1-1}T_1=\tau ^a(\tau ^{r_2-1}T_2),\quad T_1=\tau^bT_2.
\] 
\item\label{51b} The space of extensions $\Ext_Q(X(T_1,r_1), X(T_2,r_2))$ is non zero if and only if there are integers $a,b\in\NN$ such that $1\le a\le r_1$, $1\le b\le r_2$, $a+r_2=b+r_1$ and 
\[
\tau ^{r_2-1}T_2=\tau ^b(\tau ^{r_1-1}T_1),\quad T_2=\tau^aT_1.
\]
\end{enumerate}
\end{lem}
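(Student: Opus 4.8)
The plan is to compute both $\Hom$ and $\Ext$ between regular indecomposables by reducing to the regular-simple case via the exact sequences listed just before the statement, and then to invoke the standard facts about the regular component of the Auslander--Reiten quiver of a tame quiver (the regular part is a union of ``tubes'', each of which is a translation quiver of the form $\ZZ A_\infty/\langle\tau^p\rangle$ for some period $p$). Concretely, for part \eqref{51a} I would first treat the base case $r_1=r_2=1$: for two regular-simple representations $T_1,T_2$, $\Hom_Q(T_1,T_2)\ne0$ forces $T_1\cong T_2$, since regular-simple objects are simple in the abelian category of regular representations, so a nonzero homomorphism between them is an isomorphism. This is exactly the $a=b=0$, $r_1=r_2=1$ case of the claimed condition.

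Next I would run an induction on $r_1+r_2$ using the short exact sequence
\[
\xymat{0\ar[r]&X(\tau^sT,r-s)\ar[r]&X(T,r)\ar[r]&X(T,s)\ar[r]&0}
\]
with $s=1$ (peeling off the top regular-simple factor $T$, or dually the bottom factor $\tau^{r-1}T$). Applying $\Hom_Q(-,X(T_2,r_2))$ and $\Hom_Q(X(T_1,r_1),-)$ to the appropriate such sequences and using that the category of regular representations is abelian with the $X(T,r)$ having the prescribed composition series, one sees that a nonzero homomorphism $X(T_1,r_1)\to X(T_2,r_2)$ exists precisely when some regular-simple factor of $X(T_1,r_1)$ coincides with some regular-simple factor of $X(T_2,r_2)$, and moreover when the ``overlap'' of the two chains of regular-simple factors is consistent with a map (i.e.\ the factors below the overlap in the source all land in the subrepresentation generated by the overlap in the target). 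Writing the regular-simple factors of $X(T_1,r_1)$ as $\tau^{r_1-1}T_1,\dots,\tau T_1,T_1$ (from bottom to top) and those of $X(T_2,r_2)$ as $\tau^{r_2-1}T_2,\dots,T_2$, the combinatorics of matching up a common sub-chain with the bottom of the source and the top of the target yields exactly the two equations $\tau^{r_1-1}T_1=\tau^a(\tau^{r_2-1}T_2)$ and $T_1=\tau^bT_2$ together with $a+r_2=b+r_1$ and the range constraints $0\le a\le r_1-1$, $0\le b\le r_2-1$. Here one uses that within a single tube the $\tau$-orbit of a regular-simple object has finite period, so ``$=$'' is the correct relation (an equality of objects up to iso, which is what the $\tau^\bullet$-notation encodes) and that $\Hom$ and $\Ext$ between objects in different tubes vanish, making both sides of the stated equivalence automatically false in that case.

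For part \eqref{51b} the cleanest route is to not repeat the argument but to derive it from \eqref{51a} using the $\tau$-periodicity and a Serre-duality/AR-formula type identity: since $\Ext_Q(X,Y)\cong \Ext_Q(\tau X,\tau Y)$ for indecomposable non-projective $Y$ (and all regular indecomposables are non-projective), and since within a tube $\Ext_Q(X(T_1,r_1),X(T_2,r_2))\ne0$ is equivalent to a shifted $\Hom$-condition — precisely, $\Ext_Q(X,Y)\cong D\Hom_Q(Y,\tau X)$ on the regular part — one translates the $\Hom$-criterion of \eqref{51a} (applied to $X(T_2,r_2)$ and $\tau X(T_1,r_1)=X(\tau T_1,r_1)$) into the asserted $\Ext$-criterion, with the index ranges shifting from $\{0,\dots,r_i-1\}$ to $\{1,\dots,r_i\}$ exactly as stated. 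The main obstacle I anticipate is getting the combinatorial bookkeeping of the composition series precisely right — which sub-chain of regular-simple factors must match the \emph{bottom} of the source versus the \emph{top} of the target, and tracking how the integers $a$ and $b$ record these two alignments simultaneously with the length constraint $a+r_2=b+r_1$. Everything else (abelianness of the regular subcategory, tube structure, $\tau$-invariance of $\Ext$) is quoted from \cite{CB92}; the real work is the careful induction and the translation in the final paragraph, and I would be careful to check the boundary cases $a=0$ or $b=0$ (resp.\ $a=r_1$, $b=r_2$) where one of the ``peeling'' sequences degenerates.
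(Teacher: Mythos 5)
Your part~\eqref{51b} is exactly the paper's argument: apply the Auslander--Reiten formula $\Ext_Q(X,Y)=D\Hom_Q(Y,\tau X)$ together with $\tau X(T_1,r_1)=X(\tau T_1,r_1)$, and translate the $\Hom$-criterion; that step is fine. For part~\eqref{51a}, however, the paper does not induct on $r_1+r_2$: it argues directly that for a nonzero $\theta\colon X(T_1,r_1)\to X(T_2,r_2)$, both $\ker\theta$ and $\img\theta$ are regular subrepresentations of indecomposable regular modules, hence $\ker\theta=X(\tau^sT_1,r_1-s)$ and $\img\theta=X(\tau^bT_2,r_2-b)$; comparing the (unique) composition series of $\img\theta\cong X(T_1,s)$ read off from both sides gives the two equations at once. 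This is worth knowing because your route has a genuine gap at its center: the combinatorial matching you describe is stated backwards. The image of $\theta$ is a \emph{quotient} of the source, so it carries the \emph{top} factors $T_1,\tau T_1,\dots,\tau^{s-1}T_1$ of $X(T_1,r_1)$, and it is a \emph{subobject} of the target, so it carries the \emph{bottom} factors $\tau^bT_2,\dots,\tau^{r_2-1}T_2$ of $X(T_2,r_2)$; matching these gives $T_1=\tau^bT_2$ and $\tau^{s-1}T_1=\tau^{r_2-1}T_2$, whence the stated equations with $a=r_1-s$. Your version (``matching a common sub-chain with the bottom of the source and the top of the target'') would produce different, incorrect equations. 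Since the entire content of the lemma is precisely this bookkeeping, flagging it as ``the real work'' and then getting the orientation wrong leaves the proof incomplete.

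A secondary concern: the induction via the short exact sequences is more delicate than you suggest. Applying $\Hom_Q(-,Y)$ or $\Hom_Q(X,-)$ produces six-term exact sequences whose connecting maps prevent you from reading off ``$\Hom\neq0$ iff one of the two smaller $\Hom$'s is nonzero'' directly; you would need to track images and connecting homomorphisms (or compute dimensions) at each stage, and the boundary cases you mention do require separate treatment. None of this is fatal --- the induction can be made to work --- but the direct kernel/image argument bypasses it entirely and is the reason the paper's proof fits in a few lines. Your observation that $\Hom$ and $\Ext$ vanish between distinct tubes, and the base case for regular-simples, are both correct and are implicitly used in the paper as well.
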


\begin{proof}\
\begin{asparaenum}

\item Consider a non zero homomorphism $\theta\colon X(T_1,r_1)\to X(T_2,r_2)$.
Its kernel $\ker\theta$ being again a regular $Q$-representation must equal $X(\tau^sT_1,r_1-s)$, for some $s\in\NN$ such that $1\le s\le r_1$.
Since $X(T_1,r_1)/X(\tau^sT_1,r_1-s)\cong X(T_1,s)$, $\theta$ induces an injective homomorphism
\[
\ol{\theta}\colon X(T_1,s)\into X(T_2,r_2)
\]
Its image, being a regular $Q$-representation, must equal $X(\tau^{b}T_2,r_2-b)$, for some integer $b\in\NN$ such that $0\le b \le r_2-1$. 
By uniqueness of the regular-simple factors, we conclude that $\tau^{s-1}T_1=\tau^{r_2-1}T_2$. 
Setting $a=r_1-s$, this proves the last assertion.

\item The second part follows directly from the first using the Auslander--Reiten formulas (\cite[\S6, Lem.~1]{CB92})
\begin{align*}
\Ext_Q(X(T_1,r_1),X(T_2,r_2))
&=D(\Hom_Q(X(T_2,r_2),\tau X(T_1,r_1)))\\
&=D(\Hom_Q(X(T_2,r_2),X(\tau T_1,r_1)))
\end{align*}

\end{asparaenum}
\end{proof}

The following periodicity result on the regular indecomposable $Q$-representations, which can be found in \cite[Lec.~9]{CB92}, is a key ingredient of the proof of Theorem~\ref{50}.

\begin{prp}\label{53}
Let $T$ be a regular-simple $Q$-representation.
Then there is a minimal integer $p=p(T)\in\NN$, the period of $T$, such that $\tau^pT\cong T$. 
Then
\begin{equation}\label{52}
\dim T+\dim\tau T+\cdots+\dim\tau^{p-1}T=\delta
\end{equation}   
and $X=X(T,r)$ is a brick if and only if $dim X\le\delta$, or equivalently $r\le p$.
\end{prp}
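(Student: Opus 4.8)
The periodicity statement $\tau^pT\cong T$ and the sum formula \eqref{52} are classical consequences of the structure theory of tame (Euclidean) quivers, so I would treat those as given from \cite{CB92} and concentrate the proof on the brick characterization, which is what we actually need for Theorem~\ref{50}. The strategy is to compute $\dim\End_Q(X(T,r))$ directly from the composition series \eqref{70} and the $\Hom$-dimension count of Lemma~\ref{51}.\eqref{51a}, and to see that it equals $1$ exactly when $r\le p$.

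First I would establish \eqref{52}: the vector $\dim T+\dim\tau T+\cdots+\dim\tau^{p-1}T$ is $\tau$-invariant, hence has defect $0$ and lies in the radical of the Tits form, so it is a non-negative integer multiple of $\delta$; a dimension or support argument (each $\tau^iT$ is a regular-simple, and running once around the $\tau$-orbit sweeps out $\delta$ exactly, not a proper multiple — this is where one invokes the classification) pins the multiple down to $1$. I would cite \cite[Lec.~9]{CB92} for this rather than reprove it.

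Next, for the brick statement, I would use Lemma~\ref{51}.\eqref{51a} with $T_1=T_2=T$ and $r_1=r_2=r$: a nonzero endomorphism of $X(T,r)$ corresponds to integers $0\le a=b\le r-1$ with $\tau^{a}(\tau^{r-1}T)=\tau^{r-1}T$ and $T=\tau^{a}T$, i.e.\ to a multiple $a$ of the period $p$ with $0\le a\le r-1$. The identity gives $a=0$; any further endomorphism needs $a=p,2p,\dots$, which is possible precisely when $r-1\ge p$, i.e.\ $r>p$. Hence $\dim\End_Q(X(T,r))\ge 2$ as soon as $r>p$, while for $r\le p$ the only choice is $a=0$ and $\End_Q(X(T,r))=\KK\id$, so $X(T,r)$ is a brick. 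For the equivalence ``brick $\iff\dim X(T,r)\le\delta$'', note that by \eqref{52} and the fact that the regular-simple factors $T,\tau T,\dots,\tau^{r-1}T$ of $X(T,r)$ are the composition factors in \eqref{70}, one has $\dim X(T,r)=\sum_{i=0}^{r-1}\dim\tau^iT$, which is $\le\delta$ iff $r\le p$ (again using that the $\dim\tau^iT$ are positive and a single period sums exactly to $\delta$, so that $r>p$ forces $\dim X(T,r)\not\le\delta$ coordinatewise).

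\textbf{Main obstacle.} The delicate point is \eqref{52} — specifically, ruling out that the sum over one $\tau$-period is a proper multiple $c\delta$ with $c\ge 2$. This is not formal: it genuinely uses the classification of the regular components of a tame quiver into tubes and the fact that within a tube of rank $p$ the regular-simples are exactly $T,\tau T,\dots,\tau^{p-1}T$, each appearing once per period. I would handle it by quoting \cite[Lec.~9]{CB92} directly rather than reconstructing the tube combinatorics. Everything downstream — the $\Hom$-count and the inequality on dimension vectors — is then a routine bookkeeping argument from Lemma~\ref{51}.
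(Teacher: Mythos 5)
Your proposal is correct in substance, but note that the paper offers no proof of this proposition at all: it is quoted directly from \cite[Lec.~9]{CB92}, where both the periodicity statement with the sum formula \eqref{52} and the brick criterion are established as part of the theory of tubes. So where the paper simply cites, you cite only for the genuinely hard part (periodicity and \eqref{52}, which, as you rightly identify, require the classification of the regular components into tubes of the correct ranks) and then derive the brick criterion independently from Lemma~\ref{51}; that is a legitimate and more self-contained route, and your reduction of ``$\dim X\le\delta$'' to ``$r\le p$'' via the composition series \eqref{70} and positivity of the $\dim\tau^iT$ is fine. One step needs tightening: Lemma~\ref{51}.\eqref{51a} only characterizes \emph{non-vanishing} of $\Hom$, and for $T_1=T_2$, $r_1=r_2$ this is trivially satisfied by the identity, so the mere existence of an admissible pair with $a=p$ does not by itself produce a second, linearly independent endomorphism. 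To conclude that $X(T,r)$ is not a brick when $r>p$ you should exhibit one, e.g.\ the composite
\[
X(T,r)\twoheadrightarrow X(T,r-p)\cong X(\tau^pT,r-p)\into X(T,r)
\]
of the canonical surjection and inclusion coming from the exact sequences following \eqref{70}, which is a nonzero non-injective (hence non-scalar) endomorphism precisely when $r-p\ge1$. Conversely, for $r\le p$ your count works once you add that $\End_Q$ of an indecomposable is local, so a non-scalar endomorphism exists iff a nonzero nilpotent, hence non-injective, one does; such a map has kernel $X(\tau^sT,r-s)$ with $s\ge1$, forcing $a=r-s$ to be a positive multiple of $p$ and therefore $r\ge p+1$.
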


By this result, the set of regular-simple $Q$-representations, is the set of nodes of a disjoint union of cyclic graphs $Q_T=(T,\tau T,\dots,\tau^{p-1}T)$ in which  $p=p(T)$, and the arrows are all the $(\tau^iT,\tau^{i+1}T)$, $0\le i\le p-1$. 
Then by \eqref{70} the regular indecomposable $Q$-representations $X$ are in one-to-one correspondence with the paths $\Gamma(X)=(\tau^aT,\dots,\tau^{a+r-1}T)$ indexed by $(\tau^aT,r)$, with $\tau^aT$ in the support of some $Q_T$ and $r\in \NN$. 
The entries of $\Gamma(X)$ are the quotients $X_{r-i}/X_{r-i-1}$ in the sequence \eqref{70} with $T$ replaced by $\tau^aT$, $0\le a<p$. 
By Proposition~\ref{53}, the path $\Gamma(X)$ has no repeated node if and only if $X$ is a brick.

Furthermore, $\Ext_Q(X_1,X_2)\ne 0$ if and only if $X_1$ and $X_2$ are associated with the same $Q_T$, and by Lemma~\ref{51} we may write $\Gamma(X_i)=(\tau^{a_i}T,\dots,\tau^{a_i+r_i-1}T)$, with $a_1<a_2\le a_1+r_1\le a_2+r_2-1$: 
$\Gamma(X_1)\cup\Gamma(X_2)$ is again the support of a path, and both initial and terminal node of $\Gamma(X_2)$ are the images of their counterparts in $\Gamma (X_1)$ by translations $\tau^k\ne\id$. 
With this picture in mind, we are ready for the proof of Theorem~\ref{50}.
We shall write $d<\delta$ if $d\le \delta$ (component-wise) and $d\neq\delta$.

\begin{proof}[Proof of Theorem~\ref{50}]
Fix a regular point $M$ in $D$ with (regular) indecomposable summands $X_1,\dots,X_r$.
Then, by linearity, $0=\ideal{\delta,\dim M}_Q=\ideal{\delta,d}_Q$, which implies that the generic $Q$-representation $M(Q,d)$ is a regular indecomposable $X(T,r)$. By Remark~\ref{17} and Theorem~\ref{12}, $M(Q,d)$ is a brick with $q_Q(d)=1\ne0=q_Q(\delta)$.
Proposition~\ref{53} therefore shows that $d<\delta$ or equivalently $r<p$. 
This implies also that each $X_i=X(T_i,r_i)$ has the same property so that $r_i<p_i:=p(T_i)$.

By Proposition~\ref{48}, the claim follows if we establish the existence of a splitting $M=M_1\oplus M_2$ such that $\Ext_Q(M_2,M_1)=0$. 
Let us consider a directed graph with vertices $X_i$ and an arrow $(X_i,X_j)$ if and only if $\Ext(X_i,X_j)\ne 0$. 
Then the splitting can be obtained by grouping the $X_i$ into two non trivial summands if and only if this graph does not contain any directed loop. 
The theorem is therefore a consequence of the following Lemma~\ref{71}, and of the obvious fact that $\sum_{i=1}^k\dim X_i=\dim M<\delta$.
\end{proof}

\begin{lem}\label{71}
Let $X_1,\dots,X_k$ be regular indecomposables with $\sum_{i=1}^k\dim X_i<\delta$ and $\Ext(X_i,X_{i+1})\neq 0$ for $i=1,\dots,k-1$.
Then we have $\Ext(X_k,X_1)=0$.
\end{lem}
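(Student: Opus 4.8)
The plan is to argue by contradiction: suppose $\Ext_Q(X_k,X_1)\neq0$, so that $X_1,\dots,X_k$ form a directed cycle in the digraph with an arrow $X\to Y$ whenever $\Ext_Q(X,Y)\neq0$, and aim for a contradiction with $\sum_{i=1}^k\dim X_i<\delta$. First I would note, using Lemma~\ref{51}.\eqref{51b} (whose criterion in particular ties the top regular-simple factors of $X$ and $Y$ by a power of $\tau$ whenever $\Ext_Q(X,Y)\neq0$), that each arrow of the cycle keeps us inside one cyclic graph $Q_T$, so all $X_i$ lie in a single $Q_T$ of period $p=p(T)$. Writing $X_i=X(\tau^{a_i}T,r_i)$, the composition series \eqref{70} identifies the regular-simple composition factors of $X_i$ with $\tau^{a_i}T,\tau^{a_i+1}T,\dots,\tau^{a_i+r_i-1}T$, each appearing once, so $\dim X_i=\sum_{l=0}^{r_i-1}\dim\tau^{a_i+l}T$.

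The cleanest step is then a dimension count resting on \eqref{52}, i.e.\ $\sum_{l=0}^{p-1}\dim\tau^lT=\delta$. Since $\dim X_i\le\sum_{i'=1}^k\dim X_{i'}\le\delta$ component-wise while $\sum_{i'=1}^k\dim X_{i'}\neq\delta$, each $\dim X_i$ satisfies $\dim X_i\le\delta$ but $\dim X_i\neq\delta$, which by Proposition~\ref{53} and \eqref{52} forces $r_i<p$; in particular $A_i:=\{a_i,a_i+1,\dots,a_i+r_i-1\}\subseteq\ZZ/p\ZZ$ has $r_i$ distinct entries. Grouping $\sum_{i=1}^k\dim X_i$ by residue class, $\sum_{i=1}^k\dim X_i=\sum_{j\in\ZZ/p\ZZ}c_j\dim\tau^jT$ with $c_j=\#\{i:j\in A_i\}$; were every $c_j$ positive this would give $\sum_{i=1}^k\dim X_i\ge\sum_j\dim\tau^jT=\delta$, contradicting the hypothesis. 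Hence some regular-simple $\tau^{j_0}T$ occurs as a composition factor of no $X_i$.

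Next I would use $j_0$ to cut $Q_T$ open: the remaining $p-1$ vertices carry the linear order $j_0+1<j_0+2<\dots<j_0+p-1$, and since $A_i$ avoids $j_0$ and $r_i<p$, each $A_i$ is a genuine (non-wrapping) interval $[s_i,e_i]$ in this order, whose left endpoint $s_i$ is the position of the top regular-simple $\tau^{a_i}T$ of $X_i$. The key claim is the monotonicity statement: $\Ext_Q(X_i,X_j)\neq0$ implies $s_i<s_j$. Indeed Lemma~\ref{51}.\eqref{51b} produces $a$ with $1\le a\le r_i$ and $\tau^{a_j}T=\tau^a(\tau^{a_i}T)$; since the block $a_i,a_i+1,\dots,a_i+r_i-1$ avoids $j_0$ and $a_j\neq j_0$, shifting by $a$ cannot wrap past the cut, so $s_j=s_i+a>s_i$ as an inequality of integers, not merely modulo $p$. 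Running this along the cycle yields $s_1<s_2<\dots<s_k<s_1$, the desired contradiction; therefore $\Ext_Q(X_k,X_1)=0$.

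I expect the third paragraph to be the real obstacle. The dimension count is clean, but making the cut rigorous needs care: one must check that $\tau^{j_0}T$ being absent from every $X_i$ really turns each $A_i$ into an honest interval of $\ZZ/p\ZZ\setminus\{j_0\}$, that $s_i$ indeed marks the top regular-simple, and above all that the translation $\tau^a$ with $1\le a\le r_i$ supplied by Lemma~\ref{51}.\eqref{51b} never crosses $j_0$ (which uses $a_j\neq j_0$), so that the separate ``$s$ strictly increases'' statements along the arrows assemble into a genuine strict increase around the whole cycle. Everything else is a direct application of \eqref{52}, \eqref{70}, Proposition~\ref{53}, and Lemma~\ref{51}.
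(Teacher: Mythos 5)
Your proof is correct and follows essentially the same route as the paper's: all $X_i$ lie in one tube $Q_T$ by Lemma~\ref{51}.\eqref{51b}, the dimension bound $\sum_i\dim X_i<\delta$ together with \eqref{52} shows the supports $\Gamma(X_i)$ cannot cover the whole period-$p$ cycle, and the strict forward shift of the top regular-simple factor under nonzero $\Ext$ then prevents the chain from closing up. Your ``cut at an uncovered node'' is just a more explicit rendering of the paper's observation that $\Gamma(X_1)\cup\cdots\cup\Gamma(X_k)$ is a consecutive list of length $a_k+r_k-a_1<p$.
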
 

\begin{proof}
According to Lemma~\ref{51}.\eqref{51b}, the sets $\Gamma_i=\Gamma(X_i)$ of all $Q$-representations $X_i$ lie in a single list $Q_T$.
Moreover, the set 
\[
\Gamma(X_1)\cup\cdots\cup \Gamma(X_k)
\]
is, by the preliminary description, a list with consecutive entries $\tau^{a_1} T,\dots,\tau ^{a_k+r_k-1}T$ starting with initial entry of $\Gamma(X_1)$ and ending with the terminal entry of $\Gamma(X_k)$. 
This list has length $a_k+r_k-a_1<p$ by \eqref{52} and the dimension hypothesis.
The result follows by applying Lemma~\ref{51}.\eqref{51b} again.
\end{proof}

\subsection{Reflection functors}\label{68}

Reflection functors, introduced by Bernstein, Gel'fand and Ponomarev in \cite{BGP73}, and independently as castling transforms by Sato and Kimura (cf.\ Section~\ref{63}), play an important role in the study of quiver representations.
We shall show explicitly that they induce functors on the class of (strongly Euler homogeneous) quiver linear free divisors.

\begin{thm}\label{31}
Reflection functors preserve the class of quiver linear free divisors.
\end{thm}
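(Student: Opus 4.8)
To prove that reflection functors preserve the class of quiver linear free divisors, I would work at the level of the combinatorial/representation-theoretic characterization in Theorem~\ref{12}, rather than trying to transport the defining equation $f$ directly. Recall that reflection functors are defined at a source or sink vertex $i\in Q_0$; the functor $\Phi_i^+$ (at a sink $i$) sends a representation of $Q$ to a representation of the quiver $\sigma_i Q$ obtained by reversing all arrows incident to $i$, and similarly $\Phi_i^-$ at a source. On dimension vectors these induce the simple reflection $s_i$ of the Weyl group acting on $\ZZ^{Q_0}$, which is an isometry of the Tits form: $q_{\sigma_i Q}(s_i d)=q_Q(d)$. So if $(Q,d)$ satisfies condition~\eqref{12a}, namely $q_Q(d)=1$, then so does $(\sigma_i Q, s_i d)$, provided $s_i d$ is again a positive (sincere) dimension vector — which holds here because $d$ being a real Schur root with $q_Q(d)=1$ forces, in the relevant range, $s_i d>0$; one has to check that sincerity is preserved, or replace it by the observation in Remark~\ref{17} that the real Schur root condition is what actually matters.

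\textbf{Key steps, in order.} First I would recall the precise definition of $\Phi_i^\pm$ and record that on a sink $i$ with incoming arrows $\alpha_1,\dots,\alpha_t$ from vertices $j_1,\dots,j_t$, the new space at $i$ is $V_i' = \ker\!\big(\bigoplus_\ell V_{j_\ell}\xrightarrow{(f_{\alpha_\ell})} V_i\big)$ when the map is surjective (dually a cokernel at a source), and that $\Phi_i^+$ restricts to an equivalence between the full subcategory of $Q$-representations with no summand isomorphic to the simple $S_i$ and the corresponding subcategory of $\sigma_i Q$-representations. Second, I would note that $\Phi_i^\pm$ preserves indecomposability (away from $S_i$) and preserves $\End_Q$, hence preserves bricks: a brick with dimension vector $d$ and no $S_i$-summand is sent to a brick, which handles condition~\eqref{12b} — that $d$ is a Schur root is preserved. (One must check $d$ has no $S_i$ summand in the relevant sense; since $d$ is sincere and $q_Q(d)=1$, the generic representation $M(Q,d)$ is indecomposable by Remark~\ref{17}, so this is automatic unless $Q$ is the single vertex.) Third, and this is the crux, I would verify condition~\eqref{12c}: that every minimal degeneration $N$ of $M(\sigma_i Q, s_i d)$ has $\dim_\KK\End(N)=2$, equivalently $\dim\Ext(N)=1$, equivalently lies in an orbit of codimension~$1$. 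Here I would use that $\Phi_i^\pm$ induces a bijection between $\GL(Q,d)$-orbits in $\Rep(Q,d)$ missing the $S_i$-locus and $\GL(\sigma_i Q, s_i d)$-orbits missing the $S_i$-locus, and that it preserves the partial order by degeneration as well as codimensions of orbits (since it is induced by an isomorphism of the relevant open subsets, or since $\Ext$ is preserved by the equivalence on the subcategories involved). Thus minimal degenerations correspond to minimal degenerations, and condition~\eqref{12c}'' transports directly. Finally I would assemble these: $(\sigma_i Q, s_i d)$ satisfies all three conditions of Theorem~\ref{12}, hence defines a linear free divisor, and the functor is invertible ($\Phi_i^-\Phi_i^+$ is the identity on the relevant subcategory), so the class of quiver linear free divisors is genuinely preserved (not just mapped into itself).

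\textbf{Expected main obstacle.} The delicate point is the bookkeeping around the simple module $S_i$ and the boundary locus where the reflection functor is not an equivalence: $\Phi_i^+$ is only an equivalence after quotienting out $S_i$-summands, and one must be sure that neither the generic representation nor the minimal degenerations $N=N_1\oplus N_2$ (with $N_1,N_2$ bricks, by Corollary~\ref{23}) involve $S_i$ as a summand in a way that breaks the orbit-correspondence or the codimension count. Because $Q$ is a tree (Theorem~\ref{44}) and $d$ is sincere, the vertex $i$ is a genuine sink or source with $d_i\ge 1$ and the maps at $i$ are generically of maximal rank, so $S_i=X(\,\cdot\,)$ appears as a summand of $N$ only in degenerate strata of high codimension — one needs to check these are never the minimal degenerations, which should follow from $q_Q(d)=1$ together with the inequality $\langle S_i, N\rangle_Q$ computations. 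Once that is pinned down, everything else is functoriality of $\Ext$ and invariance of the Tits form under $s_i$, which are routine.
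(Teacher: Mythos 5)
Your overall strategy is the same as the paper's: transport the three conditions of Theorem~\ref{12} across the reflection, using invariance of the Tits form for \eqref{12a}, preservation of bricks for \eqref{12b}, and the orbit correspondence of \cite[Prop.~4.1]{KR86} (Theorem~\ref{25} in the paper), which preserves endomorphism rings and hence orbit codimensions, for \eqref{12c}. That part of your argument is sound. The gap is in the step you flag as the ``main obstacle'' and then dismiss: you assert that the locus where the simple $S_k$ occurs as a summand (equivalently, where the map at the reflected vertex $k$ fails to be injective/surjective) has high codimension, so that minimal degenerations avoid it and sincerity of $s_kd$ is automatic. This is false in general. Writing $Z$ for that locus, one has $Z\subseteq D$, and $Z$ is a divisor precisely when $d_k=\sum_{k\to i}d_i$ (for a source $k$), in which case $Z$ is cut out by the determinant of the generic square matrix $(x_{k\to i})$ and is an entire irreducible component of $D$, whose generic point is a minimal degeneration containing $S_k$ as a summand; moreover $(s_kd)_k=0$, so the reflected dimension vector is not sincere and your entire transport argument is unavailable. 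Already $A_3$ with $d=(1,1,1)$ and $D=\{x_{12}x_{23}=0\}$ exhibits this.

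The paper resolves exactly this case separately (Proposition~\ref{24}): using Proposition~\ref{73} and Lemma~\ref{21} it shows that equality $d_k=\sum_{k\to i}d_i$ forces a unique arrow $k\to l$ with $d_k=1=d_l$, so the reflection degenerates to deleting a pendant $(1,1)$ arrow, and then $D=D^*\times\AA^1\cup\AA^{\dim D}\times\{0\}$, from which the linear freeness of $D^*$ follows by a direct product argument rather than by the orbit correspondence. In the complementary case $d_k<\sum_{k\to i}d_i$ the loci $Z$ and $Z^*$ do have codimension at least $2$, one checks $Z^*\subseteq\Sing(D^*)$ by invariance and dimension so that all minimal degenerations on the reflected side lie in the domain of the correspondence, and your argument goes through. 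To complete your proof you must add this dichotomy on $d_k$ versus $\sum_{k\to i}d_i$ and supply the separate argument for the equality case; the claim that $q_Q(d)=1$ and a $\langle S_k,N\rangle_Q$ computation rule it out cannot be made to work.
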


Let $Q$ be a quiver and denote by $e_k\in\ZZ^{Q_0}$ the unit vector corresponding to $k\in Q_0$.
To a vertex $k\in Q_0$ with no loop attached one associates a reflection $r_k\colon\ZZ^{Q_0}\to\ZZ^{Q_0}$ by setting $r_k(m)=m-(m,e_k)_Qe_k$.
These reflections generate the Weyl group $W_Q\subseteq\GL(\ZZ^{Q_0})$ which is of finite index in the group of transformations which leave $q_Q$ invariant.

Reflection functors are a realization on quiver representations of the above reflections on dimension vectors.
Quivers $Q$ for which $k$ is a source are transformed to quivers $Q^*=r_k(Q)$ for which $k$ is a sink, and vice versa, by reversing all arrows involving $k$. 
Dimension vectors $d\in\NN^{Q_0}$ with
\begin{equation}\label{28}
d_k\le\sum_{k\to i\in Q_1}d_i
\end{equation}
correspond to dimension vectors $d^*\in\NN^{Q^*_0}$ with $d^*_k\le\sum_{k\gets i\in Q^*_1}d^*_i$ via $d^*=r_k(d)$, or more explicitly by $d^*_k=\sum_{k\to i\in Q_1}d_i-d_k$ and $d^*_i=d_i$ for $i\ne k$.
Note that condition~\eqref{12a} in Theorem~\ref{12} is invariant under $d\leftrightarrow d^*$.
The correspondence can be extended to a correspondence between the orbits in open subsets $\Rep'(Q,d)\subset\Rep(Q,d)$ and $\Rep'(Q^*,d^*)\subset\Rep(Q^*,d^*)$ as follows.
For $M=(V_i,f_\alpha)$ and $M^*=(V^*_i,f^*_\alpha)=r_k(M)$, these are defined by the conditions that, for all $k$,
\begin{gather*}
(f_{k\to i})_{k\to i\in Q_1}\colon V_k\rightarrow\bigoplus_{k\to i\in Q_1}V_i\text{ is injective and}\\
\sum_{k\gets i\in Q^*_1}f^*_{k\gets i}\colon V^*_k\leftarrow\bigoplus_{k\gets i\in Q^*_1}V^*_i\text{ is surjective}.
\end{gather*}
Then the correspondence is given by
\begin{equation}\label{102}
V^*_k=\coker\left((f_{k\to i})_{k\to i\in Q_1}\right),\quad V_k=\ker\left(\sum_{k\gets i\in Q^*_1}f^*_{k\gets i}\right),\quad V_i=V^*_i,\ i\ne k,
\end{equation}
$f^*_{k\to i\in Q_1}$ and $f^*_{k\gets i\in Q^*_1}$ being the canonical maps, and $f^*_\alpha=f_\alpha$ for $\alpha\in Q_1\cap Q^*_1$.

We shall use the following trivial

\begin{lem}\label{21}
Let $Q$ be a quiver, $k$ a source or sink of $Q$, and $d\in\NN^{Q_0}$.
If $M\in\Rep(Q,d)$ is indecomposable then $M=e_kM$ or $M\in\Rep'(Q,d)$.
In particular, if $\#Q_0>1$ and $d$ is a sincere root then $M\in\Rep'(Q,d)$ and \eqref{28} holds true.
\end{lem}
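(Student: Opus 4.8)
The plan is to prove the first assertion by a direct case analysis on whether the structure map at $k$ degenerates, and then to derive the ``in particular'' clause as an immediate consequence. Suppose first that $k$ is a source of $Q$, so that the arrows at $k$ are exactly those of the form $k\to i$, and the relevant structure map is $\phi:=(f_{k\to i})_{k\to i\in Q_1}\colon V_k\to\bigoplus_{k\to i\in Q_1}V_i$. If $\phi$ is injective then, by definition, $M\in\Rep'(Q,d)$ and we are done. If $\phi$ is not injective, let $K=\ker\phi\neq 0$. I claim the subrepresentation $K$ (with zero spaces at all vertices $i\neq k$) is a direct summand of $M$: choosing a linear complement $W$ of $K$ inside $V_k$, the decomposition $V_k=K\oplus W$ exhibits $M$ as the direct sum of the subrepresentation supported on $K$ at $k$ and the subrepresentation with space $W$ at $k$ and $V_i$ at $i\neq k$ — this is a decomposition of $Q$-representations precisely because $k$ is a source, so no arrow \emph{into} $k$ can obstruct the splitting, and every arrow \emph{out} of $k$ kills $K$ by construction. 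Since $M$ is indecomposable, one of the two summands must be zero; as $K\neq 0$, the second summand is zero, forcing $V_i=0$ for $i\neq k$, i.e.\ $M=e_kM$. The case where $k$ is a sink is dual: the structure map is $\psi:=\sum_{k\gets i\in Q_1}f_{k\gets i}\colon\bigoplus_{k\gets i\in Q_1}V_i\to V_k$, and if $\psi$ fails to be surjective then $\coker\psi\neq 0$ splits off as a direct summand supported at $k$ (now it is arrows \emph{out} of $k$ that are absent, so no obstruction from that side, while $\img\psi$ receives everything coming in), again forcing $M=e_kM$ by indecomposability.

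For the ``in particular'' clause, assume $\#Q_0>1$ and that $d$ is a sincere root; by Kac's theorem (invoked in Remark~\ref{17}) a root is the dimension vector of some indecomposable, and here sincerity just says $d_i\ge 1$ for all $i\in Q_0$. Applying the first part to such an indecomposable $M$ with $\ddim M=d$: since $d$ is sincere and $\#Q_0>1$, there is a vertex $i\neq k$ with $V_i\neq 0$, so $M\neq e_kM$, whence $M\in\Rep'(Q,d)$. Finally, membership in $\Rep'(Q,d)$ at a source $k$ means the map $\phi$ above is injective, so $d_k=\dim V_k\le\sum_{k\to i\in Q_1}\dim V_i=\sum_{k\to i\in Q_1}d_i$, which is exactly \eqref{28}; at a sink one gets the surjectivity of $\psi$ and hence the dual inequality, which is \eqref{28} for $Q^*$ (or one simply notes the statement of the lemma is being applied in the form needed).

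I expect no serious obstacle here — this is labelled a ``trivial'' lemma in the text — but the one point that needs a moment's care is the splitting claim: one must check that the candidate summand (the kernel, resp.\ cokernel, placed at $k$ with zeros elsewhere) really is a subrepresentation \emph{and} a direct summand in the category of $Q$-representations, and this is exactly where the hypothesis that $k$ is a source (resp.\ sink) is used — it guarantees there are no arrows in the ``wrong'' direction at $k$ that would couple $K$ (resp.\ $\coker\psi$) to the rest of $M$. Everything else is linear algebra plus the definition of $\Rep'$ and a one-line appeal to Kac's theorem for the existence of the indecomposable.
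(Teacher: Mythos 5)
Your proof is correct; the paper itself gives no argument for this lemma (it is introduced as ``the following trivial'' lemma), and your case analysis --- splitting off $\ker\phi$ at a source, resp.\ a complement of the image at a sink, as a direct summand supported at $k$, then using indecomposability and Kac's theorem for the ``in particular'' clause --- is exactly the intended argument.
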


Note that if $(Q,d)$ defines a linear free divisor then the hypotheses for the last statement in Lemma~\ref{21} are fulfilled.
Indeed, $Q$ must have a source $k$ by Theorem~\ref{44}, $d$ is sincere by definition and a Schur root by Theorem~\ref{12}.\eqref{12b}, and $\#Q_0=1$ would imply $Q_1=\emptyset$ and hence $\Rep(Q,d)=0$ by Theorem~\ref{44}.

The following result can be found in \cite[Prop.~4.1]{KR86}.

\begin{thm}\label{25}
Let $k$ be a source of a quiver $Q$.
The correspondence \eqref{102} induces a homeomorphism 
\[
\Rep'(Q,d)/\GL(Q,d)\to\Rep'(Q^*,d^*)/\GL(Q^*,d^*)
\]
preserving the isomorphism class of endomorphism rings.
\end{thm}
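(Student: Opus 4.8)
The statement to prove is Theorem~\ref{25}: for a source $k$ of a quiver $Q$, the correspondence \eqref{102} induces a homeomorphism $\Rep'(Q,d)/\GL(Q,d)\to\Rep'(Q^*,d^*)/\GL(Q^*,d^*)$ that preserves the isomorphism class of endomorphism rings. The plan is to construct the map at the level of representations, show it is well-defined on isomorphism classes (hence descends to orbit spaces), exhibit an inverse coming from the reverse reflection $r_k$ applied to $Q^*$ (which is again a reflection at $k$, now a sink), and finally check continuity of both maps in the quotient topology together with the statement about endomorphism rings.

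First I would set up the reflection functor $F_k^+$ (for $k$ a source) explicitly: given $M=(V_i,f_\alpha)\in\Rep'(Q,d)$, so that $(f_{k\to i})\colon V_k\to\bigoplus_{k\to i}V_i$ is injective, put $V_k^*=\coker((f_{k\to i}))$ with the canonical surjections $f^*_{k\gets i}$, and $V_i^*=V_i$, $f^*_\alpha=f_\alpha$ otherwise, as in \eqref{102}. Injectivity of $(f_{k\to i})$ forces $\dim V_k^*=\sum_{k\to i}d_i-d_k=d_k^*$, so $F_k^+(M)\in\Rep(Q^*,d^*)$, and one checks directly from the cokernel description that $\sum_{k\gets i}f^*_{k\gets i}$ is surjective, i.e.\ $F_k^+(M)\in\Rep'(Q^*,d^*)$. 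A morphism $\phi\colon M\to N$ in $\Rep'(Q,d)$ restricts to $\phi_i$ on each $V_i$, $i\ne k$, and because $\phi$ commutes with the $f_{k\to i}$ and the $g_{k\to i}$ it induces a unique $\phi_k^*$ on cokernels making the squares commute; functoriality and compatibility with composition are then immediate. In particular isomorphisms go to isomorphisms, so $F_k^+$ descends to a well-defined map on orbit spaces $\Rep'(Q,d)/\GL(Q,d)\to\Rep'(Q^*,d^*)/\GL(Q^*,d^*)$.

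Next I would produce the inverse. Since $k$ is a sink of $Q^*$, the reverse functor $F_k^-$ applies to $\Rep'(Q^*,d^*)$: given $M^*=(V_i^*,f_\alpha^*)$ with $\sum_{k\gets i}f^*_{k\gets i}$ surjective, set $V_k=\ker(\sum_{k\gets i}f^*_{k\gets i})$ with the $f_{k\to i}$ the canonical inclusions composed with projections, exactly the second formula in \eqref{102}. The content is that $F_k^-\circ F_k^+\cong\id$ and $F_k^+\circ F_k^-\cong\id$ naturally on the primed subcategories: this is the standard computation that for an injective map into a direct sum, taking the cokernel and then the kernel of the induced surjection recovers (canonically, up to the evident isomorphism) the original subspace, and dually. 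One must be a little careful that the relevant short exact sequence $0\to V_k\to\bigoplus_{k\to i}V_i\to V_k^*\to 0$ splits only as vector spaces, but the natural isomorphism $M\cong F_k^-F_k^+M$ is canonical regardless; so on orbit spaces the two maps are mutually inverse bijections.

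Finally, continuity. Both $F_k^+$ and $F_k^-$ are, on the primed loci, given by polynomial (indeed linear) formulas in the entries of the $f_\alpha$ once one trivializes the kernel/cokernel bundle locally — but the cleaner route, which I would take, is: the primed locus $\Rep'(Q,d)$ is $\GL(Q,d)$-invariant and open, and one can build a $\GL(Q,d)$-equivariant morphism of varieties $\Rep'(Q,d)\to\Rep'(Q^*,d^*)/\GL(Q^*,d^*)$ locally by choosing local algebraic splittings of the universal cokernel sequence; since the quotient map is open, these glue to a continuous map on the quotient, and the analogous argument in reverse gives continuity of the inverse. The statement about endomorphism rings is then essentially free: the natural isomorphism $M\cong F_k^-F_k^+M$, together with functoriality of $F_k^\pm$, yields mutually inverse ring homomorphisms $\End_Q(M)\to\End_{Q^*}(F_k^+M)$, so $\End_Q(M)\cong\End_{Q^*}(F_k^+M)$ as $\KK$-algebras; in particular the isomorphism class of the endomorphism ring is preserved. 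The main obstacle is the careful bookkeeping in verifying that $F_k^\pm$ restrict to genuine functors between the \emph{primed} subcategories (i.e.\ that $F_k^+$ lands in $\Rep'$, which uses exactly the injectivity/surjectivity hypotheses and the fact that $k$ is a source) and that the natural isomorphisms $F_k^-F_k^+\cong\id$, $F_k^+F_k^-\cong\id$ are compatible with morphisms; the topological part, once the algebraic family structure is in place, is routine since quotient maps by algebraic group actions are open. This is the content referenced to \cite[Prop.~4.1]{KR86}, so one may alternatively simply quote it, but the proof sketch above is the one I would carry out.
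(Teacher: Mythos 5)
Your proposal is correct, but note that the paper does not prove Theorem~\ref{25} at all: it is stated with the single sentence ``The following result can be found in \cite[Prop.~4.1]{KR86}'' and used as a black box. What you have written out is the standard Bernstein--Gel'fand--Ponomarev reflection-functor argument that underlies that citation: $F_k^+$ lands in $\Rep'(Q^*,d^*)$ because the quotient map onto a cokernel is surjective, $F_k^-F_k^+\cong\id$ on the primed locus because the kernel of the projection $\bigoplus V_i\to\coker$ recovers the image of the injective map $(f_{k\to i})$, and full faithfulness of a functor admitting a quasi-inverse gives the isomorphism of endomorphism rings. All of these steps are sound. Two small remarks on the topological part: for descending to the orbit space you only need the universal property of the quotient topology (a map out of a quotient is continuous iff its composite with the projection is), not openness of the quotient map, although openness is true and harmless; and the local trivialization of the cokernel bundle you invoke can be made completely explicit by choosing, on each member of a Zariski-open cover, a $d_k^*$-element subset of coordinates of $\bigoplus_{k\to i}V_i$ that surjects onto the cokernel, which exhibits $F_k^+$ as given locally by rational functions. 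So your argument buys a self-contained proof where the paper settles for a reference; nothing in it conflicts with how the theorem is used later (in particular with the preservation of conditions \eqref{12b} and \eqref{12c} of Theorem~\ref{12} in the proof of Theorem~\ref{31}).
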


Let us assume that $(Q,d)$ defines a linear free divisor $D$ and fix a source $k$ of $Q$.
The case of a sink $k$ can be treated similarly.
We want to show that also $(Q^*,d^*)$ defines a linear free divisor $D^*$.
Let us denote by $Z=\Rep(Q,d)\setminus\Rep'(Q,d)$ and $Z^*=\Rep(Q^*,d^*)\setminus\Rep'(Q^*,d^*)$ the complements of the domain and range of the reflection functor.
Note that $Z$ and $Z^*$ are defined by the maximal minors of $(x_{k\to i})_{k\to i\in Q_1}$ and $\sum_{k\gets i\in Q^*_1}x^*_{k\gets i}$ and invariant under $\GL(Q,d)$ and $\GL(Q,d^*)$ respectively.
So $Z\subseteq D$ as a union of orbits excluding the open one.

Let us first assume that $d_k=\sum_{k\to i\in Q_1}d_i$.
Then $Z$ is defined by the determinant $\det((x_{k\to i})_{k\to i\in Q_1})$ of a generic matrix, and is hence an irreducible component of $D$. 
A general point in $Z$ is a direct sum of two indecomposables $N_1$ and $N_2$ subject to the conditions in Proposition~\ref{73}.
Then Lemma~\ref{21}, implies that, after perhaps permuting $N_1$ and $N_2$, $N_1=e_kN_1$ and hence $\dim N_1=1$ by indecomposability of $N_1$.
As $\Hom(N_2,N_1)=0$, this implies that $e_kN_2=0$ and hence $1=d_k=\sum_{k\to i\in Q_1}d_i$.
In other words, there a unique arrow $\alpha\colon k\to l\in Q_1$ with $s\alpha=k$ and $Z$ is defined by $x_\alpha=0$. 
Note that $Q^*$ is the full subquiver of $Q$ with $Q^*_0=Q_0\setminus\{k\}$ and $d\in\NN^{Q_0}$ restricts to $d^*\in\ZZ^{Q^*_0}$.
Then the inclusion $\iota\colon\GL(Q,d^*)\into\GL(Q,d)$ defined by $\iota(g)_k=g_l$ and $\iota(g)_i=g_i$ for $i\in Q_0^*$ identifies $\GL(Q,d^*)$ with the subgroup of $\GL(Q,d)$ that stabilizes the coordinate $x_\alpha$.
Thus $D$ is of the form $D=D^*\times\AA^1\cup\AA^{\dim D}\times\{0\}$ where $D^*$ is the linear free divisor defined by $(Q^*,d^*)$.

We summarize these arguments as follows.

\begin{prp}\label{24}
Let $(Q,d)$ define a linear free divisor $D$. 
Then the following statements hold:
\begin{enumerate}
\item\label{24a} $Q$ has both sources and sinks.
\item\label{24b} For a source (or sink) $k\in Q_0$, $d_k\le\sum_{k\to i\in Q_1}d_i$ (or $d_k\le\sum_{k\gets i\in Q_1}d_i$).
\item\label{24c} In case of equality in \eqref{24b}, there is a unique arrow $\alpha\colon k\to l\in Q_1$ with $s\alpha=k$ (or $\alpha\colon k\gets l\in Q_1$ with $t\alpha=k$) and $d_k=1=d_l$.
Note that $(Q^*,d^*)$ is then obtained from $(Q,d)$ by $Q^*_0=Q\setminus\{k\}$ and $Q^*_1=Q_1\setminus\{\alpha\}$ and $d^*=d\vert_{Q^*_0}$.
\item\label{24d} If $(Q,d)$ and $(Q^*,d^*)$ are related as in \eqref{24c}, then $(Q,d)$ defines a linear free divisor if and only if $(Q^*,d^*)$ does.
Such linear free divisors are related by $D=D^*\times\AA^1\cup\AA^{\dim D}\times\{0\}$.
\end{enumerate}
\end{prp}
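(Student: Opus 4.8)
The plan is to prove Proposition~\ref{24} by collecting together the running arguments that precede its statement and organizing them cleanly under the four headings. Statements~\eqref{24a}, \eqref{24b}, \eqref{24c} are essentially established already, so most of the work is bookkeeping; the genuine content is in \eqref{24d}, and within that the implication that $(Q^*,d^*)$ defining a linear free divisor forces $(Q,d)$ to do so as well (the reverse direction having been spelled out in the paragraph preceding the proposition).

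First I would dispose of \eqref{24a}: since $(Q,d)$ defines a linear free divisor, Theorem~\ref{44} says $Q$ is a tree, in particular a finite connected acyclic graph with $\#Q_0>1$ (as noted in the remark after Lemma~\ref{21}, $\#Q_0=1$ would force $Q_1=\emptyset$ and $\Rep(Q,d)=0$); any finite nonempty DAG has at least one source and at least one sink. For \eqref{24b} I would invoke the last sentence of Lemma~\ref{21}: $d$ is sincere by definition of a quiver linear free divisor and a Schur root by Theorem~\ref{12}.\eqref{12b}, hence a fortiori a root, so the generic representation $M(Q,d)$ — which is indecomposable by Remark~\ref{17} — lies in $\Rep'(Q,d)$, and membership in $\Rep'(Q,d)$ is exactly the injectivity condition $d_k\le\sum_{k\to i}d_i$ at the source $k$ (dually for a sink). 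For \eqref{24c}, assume equality $d_k=\sum_{k\to i\in Q_1}d_i$ at the source $k$. Then the subvariety $Z=\Rep(Q,d)\setminus\Rep'(Q,d)$ is the vanishing locus of the determinant of the square generic matrix $(x_{k\to i})_{k\to i\in Q_1}$, hence an irreducible hypersurface, and $Z\subseteq D$ as a union of non-open orbits; comparing with the decomposition of $D$ into irreducible components, $Z$ is one of them, so its generic point is a minimal degeneration $N=N_1\oplus N_2$ as in Proposition~\ref{73}. By Lemma~\ref{21} one of the summands, say $N_1$, satisfies $N_1=e_kN_1$, and indecomposability forces $\dim N_1=1$ (a one-dimensional space at vertex $k$ and nothing else); then $\Hom_Q(N_2,N_1)=0$ from \eqref{90} forces $e_kN_2=0$, whence $d_k=\dim(e_kN)=1$ and the sum $\sum_{k\to i}d_i=d_k=1$ has a single nonzero term $d_l=1$, i.e. there is a unique arrow $\alpha\colon k\to l$ and $d_k=d_l=1$. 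The description of $(Q^*,d^*)$ as the full subquiver on $Q_0\setminus\{k\}$ with $\alpha$ deleted is then immediate from the general reflection recipe \eqref{102} in this degenerate situation.

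For \eqref{24d}, with $(Q,d)$ and $(Q^*,d^*)$ related as in \eqref{24c}: I would exhibit the explicit geometric relation. Since $Z=\Z(x_\alpha)$, the inclusion $\iota\colon\GL(Q^*,d^*)\into\GL(Q,d)$ sending the $l$-component to both the $k$- and $l$-slots identifies $\GL(Q^*,d^*)$ with the stabilizer of the coordinate $x_\alpha$ in $\GL(Q,d)$, and $\Rep(Q,d)\cong\Rep(Q^*,d^*)\times\AA^1$ with $\GL(Q,d)$ acting on the $\AA^1$-factor through the character $\det_l/\det_k$ acting by scaling $x_\alpha$. One then checks that the open orbit in $\Rep(Q,d)$ is $(\text{open orbit in }\Rep(Q^*,d^*))\times\AA^1\setminus\{0\}$ exactly when $\Rep(Q^*,d^*)$ itself has an open orbit, giving the equivalence of prehomogeneity and the identification $D=(D^*\times\AA^1)\cup(\AA^{\dim D^*}\times\{0\})=D^*\times\AA^1\cup\AA^{\dim D}\times\{0\}$; linear freeness transfers because $\Der(-\log D)$ splits as a direct sum according to the product-with-hyperplane structure, or more simply because one may assemble a Saito basis for $D$ from a Saito basis for $D^*$ (pulled back along the projection) together with the Euler field $x_\alpha\partial_{x_\alpha}$, and the resulting Saito determinant is $x_\alpha$ times that of $D^*$, reduced iff the latter is. I would close by noting the dual statements for a sink are obtained by applying the above to $Q^{\mathrm{op}}$, or equivalently by replacing $\Ext$-vanishing with its transpose throughout.

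The main obstacle I anticipate is none of the algebra but rather making precise, in \eqref{24d}, the claim that ``$D=D^*\times\AA^1\cup\AA^{\dim D}\times\{0\}$'' is genuinely the reduced discriminant and that linear freeness is inherited in both directions: one must be careful that the deleted arrow really does split off a trivial tensor factor $V(1)\otimes V(1)$ from $\Rep(Q,d)$ in a $\GL(Q,d)$-equivariant way, and that adding such a factor to a linear free divisor produces a linear free divisor (this is the ``adding arrows with head and tail dimensions $1$'' operation alluded to in the introduction). Verifying the Saito-determinant computation and the orbit bookkeeping is routine but is where the care is needed; everything else is citation of Theorems~\ref{44}, \ref{12}, Proposition~\ref{73}, and Lemma~\ref{21}.
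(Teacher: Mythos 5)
Your proposal is correct and follows essentially the same route as the paper: Theorem~\ref{44} for the existence of sources and sinks, Lemma~\ref{21} applied to the indecomposable generic representation for the inequality, the observation that $Z=\Z(\det(x_{k\to i}))$ is an irreducible component of $D$ whose generic point decomposes as in Proposition~\ref{73} for the equality case, and the product decomposition $\Rep(Q,d)\cong\Rep(Q^*,d^*)\times\AA^1$ for the final equivalence. Your only addition is to spell out the converse direction of \eqref{24d} via the Saito determinant being $x_\alpha$ times that of $D^*$, which the paper leaves implicit.
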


\begin{cor}
Let $(Q,d)$ define a linear free divisor $D$ and assume that $d_i=d_j$ for all $i,j\in Q_0$.
Then $d_i=1$ for all $i\in Q_0$ and $D$ is a normal crossing divisor.
\end{cor}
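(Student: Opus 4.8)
The plan is to induct on $\#Q_0$, using Proposition~\ref{24} to strip off pendant vertices one at a time. First I would observe that if $\#Q_0=1$ then, as noted after Lemma~\ref{21}, $Q_1=\emptyset$ and $\Rep(Q,d)=0$, so the statement is vacuous; and if $\#Q_0=2$ then by Theorem~\ref{44} the quiver is a single arrow $\alpha\colon k\to l$, the source $k$ satisfies $d_k=d_l$, so equality holds in Proposition~\ref{24}.\eqref{24b}, and Proposition~\ref{24}.\eqref{24c} forces $d_k=1=d_l$; here $D=\{0\}\subset\KK$ is the normal crossing divisor in dimension $1$. This gives the base case.

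For the inductive step, suppose $(Q,d)$ defines a linear free divisor with $d_i=d_j=:e$ for all $i,j\in Q_0$ and $\#Q_0\ge 3$. By Proposition~\ref{24}.\eqref{24a}, $Q$ has a source $k$. Since $Q$ is a tree by Theorem~\ref{44}, $k$ has exactly one arrow attached, say $\alpha\colon k\to l$, so $\sum_{k\to i\in Q_1}d_i=d_l=e=d_k$: equality holds in Proposition~\ref{24}.\eqref{24b}. Then Proposition~\ref{24}.\eqref{24c} forces $d_k=1=d_l$, so $e=1$; that is, $d_i=1$ for all $i\in Q_0$. Now $f$ is the determinant of the Saito matrix for a quiver with all dimensions $1$, and the hypothesis of Lemma~\ref{87} is trivially satisfied (each $\GL_{d_i}=\GG_m$ contributes a single weight, and distinctness of the $\lambda_i$ follows from Theorem~\ref{44} exactly as in the proof of Proposition~\ref{89}); hence by Proposition~\ref{88} each coefficient of each Saito vector field is a scalar multiple of a single coordinate, and the argument in the proof of the theorem characterizing Abelian linear free divisors (or directly: $\GL(Q,d)=\GG_m^{\#Q_0}$ is a torus, so $D$ is Abelian) shows $D$ is a normal crossing divisor. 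Alternatively, and more in the spirit of the inductive setup, Proposition~\ref{24}.\eqref{24d} shows $(Q^*,d^*)$ with $Q^*_0=Q_0\setminus\{k\}$, $d^*=d|_{Q^*_0}$ again defines a linear free divisor with all dimensions equal to $1$; by induction $D^*$ is normal crossing, and $D=D^*\times\AA^1\cup\AA^{\dim D}\times\{0\}$ is then normal crossing as well.

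The main obstacle, such as it is, is purely bookkeeping: one must check that the inductive hypothesis genuinely applies, i.e.\ that passing to $(Q^*,d^*)$ preserves both "defines a linear free divisor'' (supplied by Proposition~\ref{24}.\eqref{24d}) and "all dimension-vector entries equal'' (automatic once we know $e=1$, since deleting $k$ leaves all remaining entries equal to $1$). There is no analytic or cohomological content here; the only subtlety is confirming that the source $k$ produced by Proposition~\ref{24}.\eqref{24a} is a leaf of the tree, which is immediate from Theorem~\ref{44}, and that the equality case of Proposition~\ref{24}.\eqref{24b} is forced, which follows since a leaf source has a single outgoing arrow.
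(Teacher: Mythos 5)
There is a genuine gap at the pivotal step. You write: ``Since $Q$ is a tree by Theorem~\ref{44}, $k$ has exactly one arrow attached.'' This is false: a source of a tree is a vertex with no \emph{incoming} arrows, and it may well have several outgoing ones. For instance the $A_3$-quiver $l_1\leftarrow k\to l_2$ with $d=(1,1,1)$ defines a linear free divisor (the normal crossing divisor $x_1x_2=0$ in $\KK^2$), all dimensions are equal, and its unique source $k$ has two arrows attached. For such a source with out-degree $m\ge 2$ and constant dimension vector $e$, one gets $\sum_{k\to i\in Q_1}d_i=me>e=d_k$, so the inequality of Proposition~\ref{24}.\eqref{24b} is \emph{strict}, Proposition~\ref{24}.\eqref{24c} cannot be invoked, and your argument stalls without yielding $e=1$. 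The source supplied by Proposition~\ref{24}.\eqref{24a} is simply not guaranteed to be a leaf.

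The repair is easy but it is a different step: take a \emph{leaf} of the tree $Q$ (a vertex of degree one, which exists since $\#Q_0\ge 2$). A leaf is automatically either a source or a sink with a unique attached arrow, so equality holds in the corresponding inequality of Proposition~\ref{24}.\eqref{24b}, and Proposition~\ref{24}.\eqref{24c} then gives $d_k=1$, whence $e=1$. (Alternatively, and more directly: Theorem~\ref{12}.\eqref{12a} gives $q_Q(d)=1$, and for a tree with constant dimension vector $e$ one computes $q_Q(d)=e^2(\#Q_0-\#Q_1)=e^2$, forcing $e=1$.) Once $d_i=1$ for all $i$, your two routes to the normal crossing conclusion --- that $\GL(Q,d)$ is then a torus so $D$ is Abelian, or the induction via Proposition~\ref{24}.\eqref{24d} peeling off leaves --- are both sound; note only that the inductive route again requires peeling leaves, not arbitrary sources.
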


Now let us consider the case $d_k<\sum_{k\to i\in Q_1}d_i$.
Then $Z\subset\Rep(Q,d)$ and $Z^*\subset\Rep(Q^*,d^*)$ are invariant determinantal subvarieties of codimension at least $2$.
By Theorem~\ref{25} and Remark~\ref{17}, condition~$\eqref{12b}$ in Theorem~\ref{12} is preserved under $(Q,d)\leftrightarrow(Q^*,d^*)$ and there is an open orbit in $\Rep(Q^*,d^*)$ whose complement $D^*$ is a hypersurface defined by a Saito determinant as in \eqref{27}. 
Then also $Z^*\subset\Sing(D^*)$ by reasons of invariance and dimension.
From Theorem~\ref{25}, one can then conclude that also condition~$\eqref{12c}$ in Theorem~\ref{12} is preserved under $(Q,d)\leftrightarrow(Q^*,d^*)$.
Thus, $D^*$ is a linear free divisor by Theorem~\ref{12}.
This proves Theorem~\ref{31}.

From Corollary~\ref{36} and Theorems \ref{38} and \ref{31} we obtain the following

\begin{cor}\label{37}
Each quiver defining a linear free divisor arises from an oriented bipartite graph by a sequence of reflection functors and adding arrows $k\to l$ with $d_k=1=d_l$.
In other words, one can make $m=1$ in Corollary~\ref{36} by such a sequence of operations.
Moreover, we have $\deg(f_{d,M})=\langle\pi^1(d),\pi^1(m)\rangle$ for $M\in M(Q,d)^\perp$ and $\deg(f_{M,d})=\langle\pi^0(d),\pi^0(m)\rangle$ for $M\in{^\perp M(Q,d)}$, where $m=\ddim(M)$ and $\pi^k\colon\NN^{\QQ_0}\to\NN^{Q_0^k}$ are the projections.
\end{cor}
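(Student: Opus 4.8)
The plan is to prove the three assertions of Corollary~\ref{37} in turn, the first being the heart of the matter and the others essentially bookkeeping with the results already established.

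\textbf{Step 1: reducing to the bipartite case.} Starting from a quiver $(Q,d)$ defining a linear free divisor, I would use the partition $Q_0=\bigcup_{k=0}^mQ_0^k$ of Corollary~\ref{36} as a measure of ``how far'' $Q$ is from being bipartite, and argue by induction on $m$. If $m\le 1$ the quiver is already (oriented) bipartite and there is nothing to do. If $m\ge 2$, pick a vertex $k$ that is a source of $Q$; by Proposition~\ref{24}.\eqref{24b} we have $d_k\le\sum_{k\to i\in Q_1}d_i$, so either strict inequality or equality holds. If strict inequality holds, apply the reflection functor at $k$: by Theorem~\ref{31} the reflected pair $(Q^*,d^*)$ again defines a linear free divisor, and $k$ has become a sink. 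One checks that performing such reflections at all sources that sit ``too low'' eventually produces a quiver whose underlying partition has strictly smaller $m$ — the point being that each reflection of a source into a sink can only move that vertex to an extremal layer, and after finitely many such moves the layers collapse. If instead equality $d_k=\sum_{k\to i\in Q_1}d_i$ holds, then by Proposition~\ref{24}.\eqref{24c} there is a unique arrow $\alpha\colon k\to l$ with $d_k=d_l=1$, and by Proposition~\ref{24}.\eqref{24d} deleting $(k,\alpha)$ yields a smaller quiver $(Q^*,d^*)$ still defining a linear free divisor, with $D=D^*\times\AA^1\cup\AA^{\dim D}\times\{0\}$; reversing this is exactly ``adding an arrow $k\to l$ with $d_k=1=d_l$''. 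Either way the induction hypothesis applies to $(Q^*,d^*)$, and reversing the operations brings us back to $(Q,d)$. This establishes that modulo reflection functors and such arrow additions/deletions one reaches $m=1$.

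\textbf{Step 2: the degree formulas.} Once $m=1$, i.e. $Q$ is oriented bipartite with $Q_0=Q_0^0\sqcup Q_0^1$ and $Q_1\subseteq Q_0^0\times Q_0^1$, I would specialize Proposition~\ref{38}. That proposition gives $\deg(f_{d,M})=\sum_{k=1}^mk\sum_{j\in Q_0^k}d_j(m_j-\sum_{j\to i}m_i)$; with $m=1$ only the $k=1$ term survives and $Q_0^2=\emptyset$, so $\deg(f_{d,M})=\sum_{j\in Q_0^1}d_jm_j$. Now I must recognize this as $\langle\pi^1(d),\pi^1(m)\rangle_Q$. Since the arrows of the bipartite quiver go from $Q_0^0$ to $Q_0^1$, no arrow has both endpoints in $Q_0^1$, so the Euler form restricted to dimension vectors supported on $Q_0^1$ reduces to the plain inner product $\langle n,n'\rangle=\sum_{j\in Q_0^1}n_jn'_j$; hence $\langle\pi^1(d),\pi^1(m)\rangle=\sum_{j\in Q_0^1}d_jm_j=\deg(f_{d,M})$, as claimed. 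The formula $\deg(f_{M,d})=\langle\pi^0(d),\pi^0(m)\rangle$ for $M\in{}^\perp M(Q,d)$ follows by the symmetric computation, using the ``similar formula for $\deg(f_{M,d})$'' asserted in Proposition~\ref{38} (or equivalently by applying the already-proven statement to the opposite quiver $Q^{\mathrm{op}}$, under which $Q_0^0$ and $Q_0^1$ swap roles and $f_{M,d}$ corresponds to $f_{d,M}$). Since reflection functors and the arrow operations of Step~1 preserve the relevant degrees up to the identifications in Proposition~\ref{24} and Theorem~\ref{31}, the formulas hold for the original $(Q,d)$ as well, but it suffices to state them in the reduced bipartite model.

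\textbf{Main obstacle.} The delicate part is Step~1: showing that the stated operations genuinely suffice to reach $m=1$, i.e. that iterating ``reflect a low source into a sink'' terminates and actually flattens the layer structure, rather than merely shuffling vertices among layers indefinitely. One must argue that in a tree (Theorem~\ref{44}) with the combinatorial constraints of Proposition~\ref{24}, reflecting all sources simultaneously — the classical Coxeter-functor move of \cite{BGP73} — strictly decreases the number of layers whenever $m\ge 2$, and that the equality-case arrow deletions do not reintroduce layers. This is a finite combinatorial argument on trees, but it is the step where one genuinely uses that $Q$ has no cycles and must be done carefully to avoid circularity with the induction. Everything else is a direct substitution into formulas already proved.
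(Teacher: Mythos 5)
Your overall strategy is the one the paper intends (the paper offers no written proof beyond citing Corollary~\ref{36}, Proposition~\ref{38} and Theorem~\ref{31}), but the one step you yourself flag as the ``main obstacle'' is left genuinely open, and the specific mechanism you propose for it is wrong. You suggest that ``reflecting all sources simultaneously --- the classical Coxeter-functor move --- strictly decreases the number of layers whenever $m\ge 2$.'' This is false. Take $A_4$ oriented as $1\to 2\to 3\leftarrow 4$ (with, say, $d=(1,1,1,1)$, which does define a linear free divisor by Theorem~\ref{111}): the layer function is $\ell(1)=0$, $\ell(2)=1$, $\ell(3)=2$, $\ell(4)=1$, so $m=2$, and the sources are $1$ and $4$. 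Reflecting at both gives $1\leftarrow 2\to 3\to 4$, with layers $\ell(2)=0$, $\ell(1)=\ell(3)=1$, $\ell(4)=2$ --- still $m=2$. Your earlier assertion that ``each reflection of a source into a sink can only move that vertex to an extremal layer'' is likewise false: a source in layer $j$ moves to layer $j+2$, which need not be extremal. The correct move is to reflect only the sources lying in the \emph{minimal} layer $Q_0^0$ (in the example, reflect only at vertex $1$, which yields the bipartite orientation $1\leftarrow 2\to 3\leftarrow 4$). These vertices are automatically sources, are pairwise non-adjacent since $Q$ is a tree and arrows join adjacent layers only, and each moves from layer $0$ to layer $2$; after renormalizing the layer function one finds $m$ has dropped by one, and induction finishes the argument. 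Your case split at each such vertex (strict inequality: reflect via Theorem~\ref{31}; equality: delete the terminal $(1,1)$-arrow via Proposition~\ref{24}) is the right bookkeeping and is unaffected by this correction.

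In Step 2 the computation of $\deg(f_{d,M})$ from Proposition~\ref{38} with $m=1$ is correct (and note that $\langle\cdot,\cdot\rangle$ in the corollary is the plain inner product, so your detour through the Euler form is unnecessary, though harmless). The claim that $\deg(f_{M,d})=\langle\pi^0(d),\pi^0(m)\rangle$ is \emph{not} obtained by a literally ``symmetric computation'': the analogue of Proposition~\ref{38} uses $\chi_{M,d}=-E^tm$ from Lemma~\ref{20} and, for $m=1$, yields $\sum_{j\in Q_0^1}d_j\bigl(\sum_{i\to j}m_i-m_j\bigr)$; converting this to $\sum_{j\in Q_0^0}d_jm_j$ requires the orthogonality relation $\langle m,d\rangle_Q=\dim\Hom_Q(M,M(Q,d))-\dim\Ext_Q(M,M(Q,d))=0$, which holds precisely because $M\in{}^\perp M(Q,d)$. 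Your parenthetical alternative via the opposite quiver does work and avoids this computation, but one of the two routes must actually be carried out.
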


Combining Proposition~\ref{97} and Corollary~\ref{37} proves the following

\begin{thm}\label{39}
Let $Q$ be a quiver with a real Schur root $d\in\NN^{Q_0}$ and let $m_1,\dots,m_r$ be the dimension vectors of the simple objects in $M(Q,d)^\perp$.
Assume that $Q$ is bipartite.
Then $D$ is a linear free divisor if and only if $\sum_{i=1}^r\langle\pi^1(d),\pi^1(m_i)\rangle=\dim\Rep(Q,d)$.
There is an analogous statement using $^\perp M(Q,d)$.
\end{thm}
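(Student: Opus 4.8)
The plan is to reduce Theorem~\ref{39} to Proposition~\ref{97} by replacing the degree formulas appearing there with the simplified expressions provided by Corollary~\ref{37}. First I would recall that by Remark~\ref{17}, since $d$ is a real Schur root, $\Rep(Q,d)$ is prehomogeneous for $\GL(Q,d)$, so the discriminant $D$ (the complement of the open orbit) is defined by the product of the generators of the semigroup of polynomial relative invariants. If moreover $Q$ has no oriented loops and $d$ is sincere --- both of which I must check are implied by the hypotheses, using that $Q$ is bipartite (hence has no loops) and that $d$ being a sincere dimension vector is part of the setup for $(Q,d)$ to \emph{define} a linear free divisor, though here I only assume $d$ is a real Schur root, so I should be a little careful and perhaps note that if $d$ is not sincere the relevant vertices can be discarded --- then Schofield's theorem as quoted before Proposition~\ref{97} says that the $f_{M_i,d}$ for $M_1,\dots,M_r$ the simple objects of $^\perp M(Q,d)$ form a basis of the ring of relative invariants, and their product is the reduced equation of $D$. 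The analogous statement holds with $M(Q,d)^\perp$ and the $f_{d,M_i}$.

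The heart of the argument is then purely a bookkeeping of degrees. By Proposition~\ref{97}, $D$ is a linear free divisor if and only if $\sum_{i=1}^r\deg(f_{d,M_i})=\dim\Rep(Q,d)$ (the middle equality $\deg(f_d)=\dim\Rep(Q,d)$ holds automatically since $f_d$ from \eqref{27} has degree equal to $\dim\Rep(Q,d)$ by construction). Now I invoke the degree formula from Corollary~\ref{37}: since $Q$ is bipartite, $Q_0=Q_0^0\cup Q_0^1$ with all arrows running from $Q_0^0$ to $Q_0^1$, so we are exactly in the situation of Corollary~\ref{36} with $m=1$, and Corollary~\ref{37} gives $\deg(f_{d,M})=\langle\pi^1(d),\pi^1(m)\rangle$ for $M\in M(Q,d)^\perp$ with $m=\ddim(M)$, and $\deg(f_{M,d})=\langle\pi^0(d),\pi^0(m)\rangle$ for $M\in{}^\perp M(Q,d)$. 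Substituting $m=m_i$ and summing over $i=1,\dots,r$ converts the criterion of Proposition~\ref{97} into exactly the stated condition $\sum_{i=1}^r\langle\pi^1(d),\pi^1(m_i)\rangle=\dim\Rep(Q,d)$, and the version with $^\perp M(Q,d)$ follows the same way using $\pi^0$.

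One point that requires care: Corollary~\ref{37} is stated for quivers that \emph{define} a linear free divisor, via the sequence of reflection functors and arrow-additions described there, whereas Theorem~\ref{39} only assumes $d$ is a real Schur root. I expect the main obstacle to be making sure the degree formula $\deg(f_{d,M})=\langle\pi^1(d),\pi^1(m)\rangle$ is genuinely available under the weaker hypothesis. The resolution should be that the derivation of the formula in the proof of Proposition~\ref{38} --- which uses only Lemma~\ref{20} ($\chi_{d,M}=Em$) and the combinatorial structure $Q_1\subset Q_0^0\times Q_0^1$ --- goes through verbatim for any bipartite $Q$ and any real Schur root $d$, because bipartiteness directly supplies the partition with $m=1$ in Corollary~\ref{36}; the cycle-freeness coming from Theorem~\ref{44} is not needed once bipartiteness is assumed outright. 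So in the writeup I would either re-derive the one-line degree identity $\deg(f_{d,M})=\langle\pi^1(d),\pi^1(m)\rangle$ directly from Lemma~\ref{20} in the bipartite case, or explicitly note that Proposition~\ref{38} and its proof apply whenever $Q$ is bipartite, and then the theorem is immediate from Proposition~\ref{97}.
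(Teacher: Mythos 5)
Your proposal is correct and follows exactly the paper's route: the paper proves Theorem~\ref{39} in one line by combining Proposition~\ref{97} with the degree formula of Corollary~\ref{37}, which is what you do. Your extra care in checking that the degree formula from Proposition~\ref{38} really only needs bipartiteness (to supply the partition with $m=1$) plus Lemma~\ref{20}, rather than the full hypothesis that $(Q,d)$ defines a linear free divisor, is a worthwhile observation that the paper leaves implicit.
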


\begin{arxiv}
\subsection{Examples}\label{84}

We shall give examples of linear free divisors in the tame case. 
The property of weak quasihomogeneity, and hence the local logarithmic comparison theorem (cf.~Theorem \ref{64}), holds at non regular points in $D$  by Theorem~\ref{34}. 
We shall focus on regular examples where the final and most sophisticated part of the proof of Theorem~\ref{50} is needed. 
We obtain in this way examples of linear free divisors for which the local comparison theorem was previously unknown even implicitly. 
If we eliminate the easy cases where the dimension vectors involve a terminal arrow with dimensions $(1,1)$ the interesting examples are supported by $\tilde E_7$ and $\tilde E_8$. 
We illustrate this in both cases, limiting ourself to the two highest regular dimension vectors and to the orientations of these quivers with arrows directed towards the triple node.     
We refer to \cite[Lec.~4]{CB92} for the list $\tilde A_n,\tilde D_m,\tilde A_n,\tilde E_6,\tilde E_7,\tilde E_8$ of tame quivers and the isotropic dimension vector $\delta$ of each one. 
According to the description in \S\ref{67}, for any dimension vector $d$ such that $d<\delta $, the number of isomorphism classes of representations is finite, and hence so is the number of orbits in $\Rep(Q,d)$. 
By Proposition~\ref{53} this covers the cases of all the regular linear free divisors.
 
In the following statement we study the effect of reflection functors in the regular case. 
We consider all the tame quivers with a fixed non oriented underlying graph $|Q|$ and we allow non sincere roots. 
Regularity is a property related to $Q$ and to $d\in \NN^{Q_0}$ and \emph{not} to the quiver obtained by deleting the vertices with $d_i=0$. 
Regularity is now preserved by any reflection functor relative to $|Q|$ because for any such reflection $d\to d^*$, we have $\delta^*=\delta$ and $\ideal{d,e}_Q=\ideal{d^*,e^*}_Q$, and by regularity $d^*\in \NN^{Q_0}\setminus\{0\}$ if $d\in \NN^{Q_0}\setminus\{0\}$. 
Notice also that these considerations are valid even if we include reflections centered at vertices $k$ such that $d_k=0$.
  
\begin{prp}
\begin{asparaenum}

\item Let $Q$ be a tame quiver and let $d$ be a dimension vector with $d<\delta$, and such that $M(Q,d)$ is indecomposable and regular. 
Then the discriminant in $\Rep(Q,d)$ is a linear free divisor. 

\item Let us consider the set of regular linear free divisors associated with  quivers $Q'$ having the same underlying graph $|Q|$ as $Q$ and dimension vector $d< \delta$. 
This set is finite and stable by reflection functors. 
Given any fixed orientation $Q$ of $|Q|$, any regular linear free divisor in $\Rep(Q',d)$ can be transformed by a sequence of reflections into a linear free divisor for $Q$. 

\item In the case of $\tilde E_7$ and $\tilde E_8$, and with a fixed orientation of arrows there are, respectively, $20$ and $28$ distinct regular indecomposable dimension vectors, or, equivalently, distinct regular linear free divisors, and among them respectively $6$ and $7$ classes, up to reflections. 

\end{asparaenum}
\end{prp}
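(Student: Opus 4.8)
# Proof Proposal for the Final Proposition

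The plan is to prove the three parts in order, using the structural results about regular representations already established, especially Propositions~\ref{53}, \ref{48}, and \ref{24}, together with Theorem~\ref{111} and Theorem~\ref{25}.

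\textbf{Part (1).} The hypotheses are that $Q$ is tame, $d<\delta$, and $M(Q,d)$ is indecomposable regular. First I would observe that since $M(Q,d)$ is indecomposable regular of dimension $d<\delta$, Proposition~\ref{53} gives that $M(Q,d)=X(T,r)$ is a brick, so $d$ is a Schur root; moreover $q_Q(d)=\dim\End_Q(M(Q,d))-\dim\Ext_Q(M(Q,d))=1-0=1$ by rigidity of bricks below $\delta$ (again Proposition~\ref{53} and \eqref{76}), so conditions~\eqref{12a} and \eqref{12b} of Theorem~\ref{12} hold. For condition~\eqref{12c}, one must show every minimal degeneration $N$ has $\dim\End_Q(N)=2$. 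Since $d<\delta$ and $N$ lies in $D$, $N$ is a direct sum of regular indecomposables each of dimension strictly below $\delta$ (hence each a brick), and $\sum\dim N_i=d<\delta$; by Lemma~\ref{71} the ``$\Ext$-graph'' on the summands has no directed loop, and for a minimal degeneration the orbit has codimension one, forcing exactly two summands with a one-dimensional $\Ext$ between them in one direction and zero in the other — exactly the configuration of Proposition~\ref{73}, giving $\dim\End_Q(N)=2$. Thus Theorem~\ref{12} applies and $D$ is a linear free divisor. (Alternatively, if one restricts to Dynkin-type arguments this is immediate, but in the tame case the finiteness of orbits for $d<\delta$ noted in \S\ref{84} is what makes the argument of Theorem~\ref{111}-style reduction go through.)

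\textbf{Part (2).} Finiteness: a regular linear free divisor with underlying graph $|Q|$ comes from a regular indecomposable dimension vector $d<\delta$ (by Part~(1) and the discussion in \S\ref{84}), and there are only finitely many vectors $d$ with $0\le d<\delta$, and for each only finitely many orientations of $|Q|$. Stability under reflection functors: for a reflection $r_k$ relative to $|Q|$ we have $\delta^*=\delta$ and $\langle d,e\rangle_Q=\langle d^*,e^*\rangle_{Q^*}$ for the relevant vectors $e$, as noted in the paragraph preceding the statement; hence $\defect$ is preserved and regularity of $M(Q,d)$ passes to $M(Q^*,d^*)$, while Theorem~\ref{31} (via Theorem~\ref{25}) ensures the linear free divisor property is preserved, and $d<\delta \Leftrightarrow d^*<\delta$ since the reflection fixes $\delta$. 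Transitivity to a fixed orientation: the Weyl group $W_Q$ acts transitively on the set of orientations of $|Q|$ (one can sink each vertex in turn), and since we allow reflections at vertices $k$ with $d_k=0$, every orientation $Q'$ can be connected to the chosen $Q$ by a sequence of admissible reflection functors, carrying the given regular linear free divisor to one for $Q$.

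\textbf{Part (3).} This is a finite enumeration. Using the list in \cite[Lec.~4]{CB92}, $\delta_{\tilde E_7}$ and $\delta_{\tilde E_8}$ are the known maximal imaginary roots; the regular indecomposable dimension vectors $d<\delta$ are, by Proposition~\ref{53}, precisely the dimension vectors of the modules $X(T,r)$ with $r<p(T)$ as $T$ ranges over regular-simple representations, organized into the tubes $Q_T$. I would compute, for the fixed orientation with all arrows directed toward the triple node, the tube structure: for $\tilde E_7$ the tubes have periods summing appropriately, yielding $20$ such vectors, and for $\tilde E_8$, $28$. The number of reflection-equivalence classes is then obtained by acting with $W_Q$ on these vectors (equivalently, grouping them by $W_Q$-orbit), giving $6$ classes for $\tilde E_7$ and $7$ for $\tilde E_8$; this is a direct check against the combinatorics of the extended Dynkin root systems.

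\textbf{Main obstacle.} The conceptual steps in (1) and (2) are short given the machinery already in place; the real work is the explicit count in (3), i.e.\ correctly enumerating the regular indecomposable dimension vectors $d<\delta$ for $\tilde E_7$ and $\tilde E_8$ and then sorting them into $W_Q$-orbits. This requires carefully reading off the tube periods and the $\tau$-orbits of regular-simples from the extended Dynkin data, and is the step most prone to off-by-one errors.
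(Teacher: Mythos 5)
Your overall architecture is reasonable and your sketch of part (2) matches the paper's (admittedly terse) argument, but part (1) has two genuine problems as written. First, you assert that any $N\in D$ ``is a direct sum of regular indecomposables'': this is false in general --- regularity of the total dimension vector $d$ only forces the defects of the indecomposable summands of $N$ to sum to zero, so $N$ may perfectly well have preprojective and preinjective summands (this is exactly why the paper must treat non-regular points separately in Theorem~\ref{34} before proving Theorem~\ref{50}). Second, and more seriously, your verification of condition \eqref{12c} of Theorem~\ref{12} invokes ``for a minimal degeneration the orbit has codimension one''; but that is precisely the equivalent condition (\ref{12c}'') that you are supposed to be establishing, so the main line of argument is circular. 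The correct input, which you only gesture at in your closing parenthetical, is the paper's actual proof: for $d<\delta$ in the tame case there are only finitely many isomorphism classes in $\Rep(Q,d)$, hence each irreducible component $D_i$ of $D=\Z(f)$ --- which is automatically pure of codimension one, being the zero set of the single nonzero Saito determinant $f$ --- contains a dense orbit, and Proposition~\ref{82} then yields that $f$ is reduced along each $D_i$, i.e.\ that $D$ is a linear free divisor. That short argument replaces your entire verification of \eqref{12a}--\eqref{12c}.

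For part (3) you describe what you would compute but do not compute it, and you omit the two facts the paper actually uses: (i) the Auslander--Reiten translate $\tau$ coincides with the Coxeter functor, a product of one reflection per vertex, so the reflection classes of regular indecomposables are exactly the $\tau$-orbits $X(T,r)\mapsto X(\tau T,r)$ (your appeal to a general $W_Q$-orbit count does not by itself justify this reduction); and (ii) the exceptional tubes have periods $(4,3,2)$ for $\tilde E_7$ and $(5,3,2)$ for $\tilde E_8$, whence the counts $\sum_T p(T)(p(T)-1)=20$ resp.\ $28$ for the regular indecomposables with $r<p(T)$, and $\sum_T(p(T)-1)=6$ resp.\ $7$ for the $\tau$-orbits. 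Without these the stated numbers are not verified.
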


\begin{proof}
\begin{asparaenum}

\item For the first statement we just use Lemma~\ref{82} which may be applied because the  finiteness of the number of isomorphism classes of representations of dimension strictly less than $\delta$ implies that each irreducible component $D_i$ of $D$ contains an open orbit.

\item The second statement is an easy combinatorial argument left to the reader.
 
\item For the last statement we use the fact (\cite{BB76}) that the Auslander--Reiten functor $\tau$ is the same as the Coxeter functor obtained as the product of one reflection at each vertex of $Q$. 
Therefore the classification that we want here is just the classification under the action of $\tau$. 
There are three indecomposable regular-simple dimension vectors $\alpha,\beta,\gamma$ which are of period $4,3,2$ and $5,3,2$, for $\tilde E_7$ and $\tilde E_8$ respectively. 
From this we easily get the stated number of dimension vectors, and the following list of reflection classes:
\begin{align*}
\text{For $\tilde E_7$: } &\alpha,\ \alpha+\tau\alpha,\ \alpha+\tau\alpha+\tau^2\alpha,\ \beta,\ \beta+\tau \beta,\ \gamma;\\
\text{for $\tilde E_8$: } & \alpha,\ \alpha+\tau\alpha,\ \alpha+\tau\alpha+\tau^2\alpha,\ \alpha+\tau\alpha+\tau^2\alpha+\tau^3\alpha,\ \beta,\ \beta+\tau \beta,\ \gamma.
\end{align*}
\end{asparaenum}
\end{proof}

In the case of $\tilde D_n$ or $\tilde E_6$ all the examples constructed in a similar way either correspond to a non sincere root or contain a terminal arrow with dimensions $(1,1)$ as in Proposition~\ref{24}.\eqref{24c}. 
In both cases the divisors are weakly quasihomogeneous as a direct consequence of the Dynkin case. 

In the case of $\tilde E_7$ and of $\tilde E_8$, with the orientation taking the central node as a unique sink, we find, respectively, one regular indecomposable representation for $\tilde E_7$ and four for $\tilde E_8$ which are not of the above type. 
For these, Theorem \ref{50} is needed to prove weak quasihomogeneity.

Let us conclude by two significant examples of such dimension vectors. 
In the case of $\tilde E_8$ we give the regular divisor of highest possible dimension. 
The dimension vector $d_{\max}$ shown below is equal to $\tau\alpha+\tau^2\alpha+\tau^3\alpha+\tau^4\alpha$, where $\alpha$ is the non sincere (regular) root with $q(\alpha)=4$ minimal, of period $5$. 
\[
\xymat@R=.2in@C=.3in{&&\KK^2\ar[d]^C\\
\KK^2\ar[r]^A&\KK^3\ar[r]^B&\KK^5&\KK^4\ar[l]_D&\KK^4\ar[l]_E&\KK^3\ar[l]_F&\KK^2\ar[l]_G&\KK\ar[l]_H}
\]
and $\dim\Rep(Q,d_{\max})=87$.
An equation for this divisor is 
\begin{align*}
\det(B)\det(B\vert C)\det(B\vert DEFG)\det(BA\vert&C\vert DEFGH)\det(BA\vert DEF) \det(C\vert DEF)\\
\det(B\vert DEFGH\vert\img(D)\cap\img(C)&)\det(C\vert DEFG\vert\img(D)\cap\img(BA))=0
\end{align*}
whose eight factors are of respective degrees $4$, $5$, $11$, $11$, $13$, $11$, $14$, and $18$. 
Here we write $\img(D)\cap\img(C)$ for two column vectors whose entries are polynomials of bidegree $(4,2)$ in the entries of $D$ and $C$, and which generate $\img(D)\cap\img(C)$ at a generic point of $\Rep(Q,d)$. 

In the case of $\tilde E_7$ the divisor is of dimension type $d=\tau\beta+\tau^2\beta$ for an appropriate simple regular root $\beta$ of period $3$. 
In this case $\dim\Rep(Q,d)=27$. 
\[
\xymat@R=.3in@C=.4in{
&&& \KK \ar[d]^G &&&\\
\KK \ar[r]^A & \KK^2 \ar[r]^B & \KK^2 \ar[r]^C & \KK^3 & \KK^2 \ar[l]_D & \KK^2 \ar[l]_E & \KK \ar[l]_F}
\]
An equation for this divisor is 
\[
\det(B)\det(E)\det(C\vert G)\det(D\vert G)\det(CBA\vert DEF\vert G)\det(CBA\vert D)\det(DEF\vert C)=0
\]
whose seven factors are of degrees $2$, $2$, $3$, $3$, $5$, $5$, and $7$.

\end{arxiv}

\bibliographystyle{amsalpha}
\bibliography{qlfd}

\end{document}